\def\AA{{\mathcal{A}}}
\def\BB{{\mathcal{B}}}
\def\CC{{\mathcal{C}}}
\def\EE{{\mathcal{E}}}
\def\FF{{\mathcal{F}}}
\def\HH{{\mathcal{H}}}
\def\II{{\mathcal{I}}}
\def\JJ{{\mathcal{J}}}
\def\KK{{\mathcal{K}}}
\def\PP{{\mathcal{P}}}
\def\QQ{{\mathcal{Q}}}
\def\ZZ{{\mathcal{Z}}}
\def\R{{\mathbb R}}
\def\d{\delta}
\def\t{\theta}
\def\T{{\Theta}}
\def\e{\varepsilon}
\def\f{\varphi}
\def\s{\sigma}
\def\a{\alpha}
\def\g{\gamma}
\def\l{\lambda}
\def\k{\varkappa}
\DeclareMathOperator{\var}{Var}
\newtheorem{theorem}{Theorem}[section]
\newtheorem{lemma}[theorem]{Lemma}
\newtheorem{corollary}[theorem]{Corollary}
\newtheorem{proposition}[theorem]{Proposition}
\numberwithin{equation}{section}
\newcommand{\supp}{\mathop{\rm supp}\nolimits}
\begin{document}

\title[$L^2$-norm and estimates from below for Riesz transforms]
{$L^2$-norm and estimates from below\\
for Riesz transforms on Cantor sets}

\author{V.~Eiderman}
\address{Vladimir Eiderman, Department of  Mathematics, University of Kentucky, Lexington, KY}
\email{eiderman@ms.uky.edu}
\author{A.~Volberg}
\address{Alexander Volberg, Department of  Mathematics, Michigan State University, East Lansing, MI}
\email{volberg@math.msu.edu}
\thanks{Research of the second named author was supported in part by NSF grants  DMS-0501067}.
\subjclass[2000]{Primary: 42B20. Secondary: 30C85, 31B15, 28A78, 28A80}
\begin{abstract}
The aim of this paper is to estimate the $L^2$-norms of vector-valued Riesz transforms $R_{\nu}^s$ and the norms of Riesz operators on Cantor sets in $\R^d$, as well as
to study the distribution of values of $R_{\nu}^s$. Namely, we show that this distribution is ``uniform'' in the following sense. The values of $|R_{\nu}^s|^2$
which are comparable with its average value are attended on a ``big'' portion of a Cantor set.
We apply these results to give examples demonstrating the sharpness of our previous estimates for the set of points where Riesz transform is large, and for the
corresponding Riesz capacities. The Cantor sets under consideration are different from the usual corner Cantor sets. They are constructed by means a certain
process of regularization introduced in the paper.
\end{abstract}

\maketitle

\section{Introduction}

Let $\s_0,\dots,\s_n$ be a finite sequence of positive numbers such that
\begin{equation}\label{f11}
2\s_{j+1}\le\s_j,\quad j=0,\dots,n-1.
\end{equation}
This sequence determines the corner Cantor set $E_n$ of generation $n$ in $\mathbb R^d$, such that the $j$-th generation consists of $2^{dj}$ cubes of edge length $\s_j$, each of these cubes contains $2^d$ corner cubes of the $(j+1)$-th generation, and so on. For brevity, we will call $E_n$ ``a Cantor set'' instead of ``a Cantor set of generation $n$''. There is a number of papers on estimates of various capacities, norms of integral transforms and operators, etc., on such Cantor sets. These estimates demonstrate the sharpness of various inequalities where the bounds are attained on Cantor sets; they are also of independent interest. But besides the necessary condition \eqref{f11}, there are certain additional conditions on $\s_j$ in many cases. In the present paper we associate with given numbers $\s_j$ satisfying {\it only} the condition \eqref{f11}, the ``regularized'' sequence $\{\ell_j\}_{j=1}^n$ such that $\s_j\approx\ell_j$, $j=1,\dots,n$, and construct the (non-corner) Cantor set $E_n$ formed by $2^{dn}$
cubes of edge length $\ell_n$. Since the corner and non-corner Cantor sets have similar structure, it is unimportant for applications which set to use.

For a nonnegative finite Borel measure $\nu$ in $\mathbb R^d$, $d\ge1$, and $s>0$, $\e>0$, define the $\e$-truncated $s$-Riesz transform of $\nu$ by
$$
R_{\nu,\e}^s(x)=\int_{|y-x|>\e}K^s(y-x)\,d\nu(y),
$$
where
$$
K^s(x)=\frac x{|x|^{s+1}},\quad x\in\mathbb R^d\setminus\{0\}.
$$
If the limit
$$
R_{\nu}^s(x):=\lim_{\e\to0+}R_{\nu,\e}^s(x)
$$
exists, we shall call it the $s$-Riesz transform of $\nu$ at $x$. To consider all finite Borel measures and all points $x\in\mathbb R^d$, one introduces the quantity
that always makes sense, namely the so called maximal $s$-Riesz transform
$$
R_{\nu,\ast}^s(x)=\sup_{\e>0}|R_{\nu,\e}^s(x)|
$$
(note that $R_{\nu,\e}^s(x)$ and $R_{\nu}^s(x)$ are vectors and $R_{\nu,\ast}^s(x)$ is a number).

Besides $R_{\nu,\e}^s$ and $R_{\nu,\ast}^s$, we need the $\e$-truncated $s$-Riesz operator defined by
$$
\mathfrak{R}_{\nu,\e}^s f(x)=\int_{|y-x|>\e}K^s(y-x)f(y)\,d\nu(y),\quad f\in L^2(\nu),\quad \e>0.
$$
For every $\e>0$, the operator $\mathfrak{R}_{\nu,\e}^s$ is bounded on $L^2(\nu)$. We set
$$
\pmb|\mathfrak{R}_{\nu}^s\pmb|:=\sup_{\e>0}\|\mathfrak{R}_{\nu,\e}^s\|_{L^2(\nu)\to L^2(\nu)}.
$$

Later on we denote by $c,C,c',\dots$ (without indices) positive constants which may vary from line to line.

Let $E_n$ be the corner Cantor set generated by a sequence $\s_0,\dots,\s_n$, and consisting of $2^{dn}$ cubes
$E_{n,k}$. Let $\mu$ the probability measure uniformly distributed on each cube $E_{n,k}$ with $\mu(E_{n,k})=2^{-dn}$.
Mateu and Tolsa \cite{MT} proved that if $0<s<d$, $(2+\d)\s_{j+1}\le\s_j,\ \d>0$, and $\t_{j+1}\le\t_j$ with
$$
\t_j=\frac{2^{-dj}}{\s_j^s},
$$
then
\begin{equation}\label{f12}
c\biggl[\sum_{j=1}^n\t_j^2\biggr]^{1/2}\le\pmb|\mathfrak{R}_{\mu}^s\pmb|\le C\biggl[\sum_{j=1}^n\t_j^2\biggr]^{1/2},
\end{equation}
where the constants $c,C$ depend only on $\d,\ d$ and $s$. In fact, Mateu and Tolsa proved a stronger assertion than the estimate from below: for sufficiently small $\e$,
\begin{equation}\label{f13}
\|R_{\mu,\e}^s\|_{L^2(\mu)}^2>c\sum_{j=1}^n\t_j^2.
\end{equation}
This result was refined by Tolsa in \cite{T1}, where the condition about monotonicity of densities $\t_j$ was dropped.
A more general class of Cantor sets for $s=d-1$ (again under the condition $\t_{j+1}\le\t_j$) was considered in \cite[Theorem~3.1]{GPT}.

The estimate from above in \eqref{f12} was also obtained in \cite{ENV} by another method.

The arguments in \cite{MT} and especially in \cite{T1} are rather complicated. We give two independent proofs of \eqref{f13} for our ``regularized'' Cantor set. The first (direct) proof is considerably simpler than in
\cite{MT}, \cite{T1}. The second approach gives the desired inequality as a corollary of the following more delicate result. We shall prove that the inequality
$|R_{\mu}^s(x)|^2>c\sum_{j=1}^n\t_j^2$ (and therefore the analogous estimate for $|R_{\mu,\e}^s|$) holds on a ``big'' portion of $E_n$.
We also consider the related problem in a more general setting and give certain applications. In particular, we establish the two-sided estimate of the Riesz capacity associated with $R_\nu^s$. This estimate is a refined version (for non-corner Cantor sets) of the corresponding results in \cite{MT}.

\medskip
We conclude this section with the construction of ``regularized'' non-corner Cantor sets.
Let $\s_0,\dots,\s_n$ be a finite sequence of positive numbers satisfying \eqref{f11}, and let two parameters $\a\in(0,\frac12)$ and $T\in(1,\frac1{2\a})$ be given. (Later on $\a, T$ will depend on $d$ and $s$.)
Define the set $J=\{j_1,\dots,j_m\}$, $m\le n$, of indices inductively in the following way: $j_1=1$; if $j_p\in J$, $1\le j_p<n$, and $\s_n\le\a2^{-(n-j_p)}\s_{j_p}$, then
$j_{p+1}$ is the least $j>j_p$, such that $\s_j\le\a2^{-(j-j_p)}\s_{j_p}$; if $j_p\in J$, $1\le j_p<n$, and $\s_n>\a2^{-(n-j_p)}\s_{j_p}$, then $j_{p+1}=n$. Thus,
\begin{gather*}
1=j_1<j_2<\cdots<j_m=n,\\
\a2^{-(j-j_p)}\s_{j_p}<\s_j\le2^{-(j-j_p)}\s_{j_p},\quad j_p\le j<j_{p+1},\quad p=1,\dots,m-1.
\end{gather*}
We set
\begin{gather*}
\ell_j=2^{-(j-j_p)}\s_{j_p},\quad j_p\le j<j_{p+1},\quad p=1,\dots,m-1;\\
\ell_n=\min\{\s_n,\,\a2^{-(n-j_{m-1})}\s_{j_{m-1}}\}.
\end{gather*}
Clearly (see \eqref{f11}),
\begin{equation}\label{f14}
\begin{array}[b]{c}
\s_j\le\ell_j<\a^{-1}\s_j,\ 1\le j<n;\quad \a\s_n\le\ell_n\le\s_n;\\ [.1in]
\ell_{j_{p+1}}\le\a2^{-(j_{p+1}-j_p)}\ell_{j_p},\quad p=1,\dots,m-1.
\end{array}
\end{equation}
Hence, $\a\s_j\le\ell_j<\a^{-1}\s_j,\ 1\le j\le n$.

For $w=(u_1,\dots,u_d)\in\R^d$ and $\ell>0$ let $Q(w,\ell)$ be the cube
\begin{equation}\label{f15}
Q(w,\ell)=\{x=(t_1,\dots,t_d)\in\R^d:\ |t_i-u_i|\le\tfrac12\ell,\ i=1,\dots,d\}.
\end{equation}
Construct the Cantor set $E_n$ recursively as follows. For $p=1$ we set $E_0=Q_{1,1}=Q(0,2T\ell_1)$. Take $2^d$ closed corner cubes $E_{1,k}$, $k=1,\dots,2^d$, of edge length
$\ell_1$ (i.e.~distinct cubes lying inside $Q_{1,1}$ with edges parallel to the edges of $Q_{1,1}$, such that each cube $E_{1,k}$ contains a vertex of $Q_{1,1}$).

\medskip

\includegraphics[height=110mm]{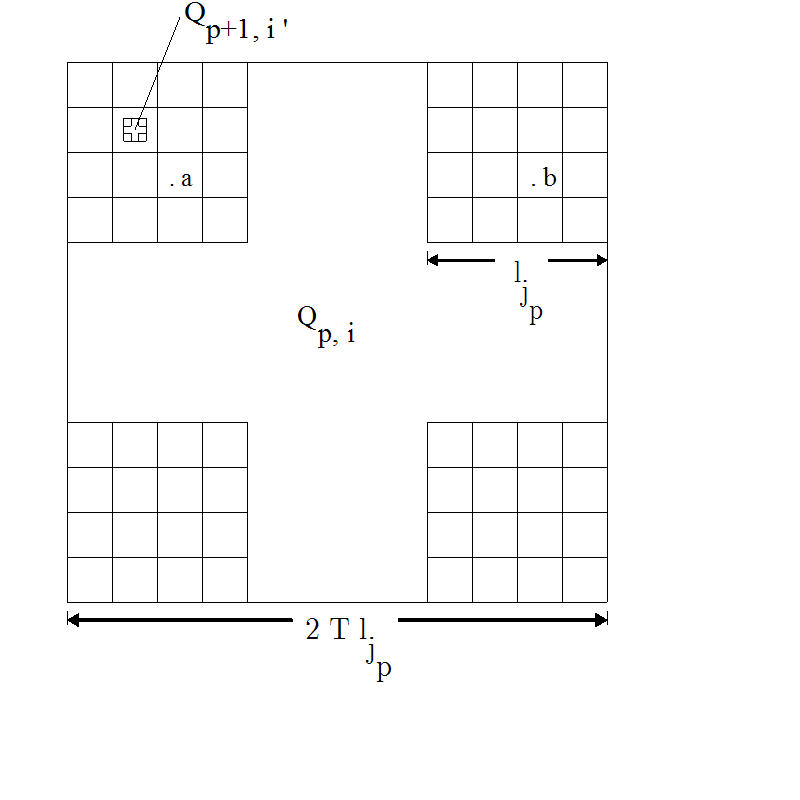}

Suppose that the cubes $E_{j_p,k}$, $k=1,\dots,2^{dj_p}$, of edge length $\ell_{j_p}$, $j_p<n$, are already defined. Partition each cube $E_{j_p,k}$ into $2^{d(j_{p+1}-j_p-1)}$
equal subcubes  $Q(w_{p+1,i'},2^{-(j_{p+1}-j_p-1)}\ell_{j_p})$. (In the figure above $d=2$, $j_{p+1}-j_p-1=2$.) If $j_{p+1}-j_p-1>0$, we may consider this partition as $j_{p+1}-j_p-1$ sequential partitions of $E_{j_p,k}$, such that on
$j$-th step, $j_p<j\le j_{p+1}-1$, we split each cube $E_{j-1,l}$ into $2^d$ cubes $E_{j,i'}$ of edge length $\ell_j$. Consider the cubes
$Q_{p+1,i'}=Q(w_{p+1,i'},2T\ell_{j_{p+1}})$. Remark that by \eqref{f14},
\begin{equation}\label{f16}
2T\ell_{j_{p+1}}\le2T\a\,2^{-(j_{p+1}-j_p)}\ell_{j_p}<2^{-(j_{p+1}-j_p)}\ell_{j_p}=\tfrac12 2^{-(j_{p+1}-j_p-1)}\ell_{j_p}.
\end{equation}
Take $2^d$ closed corner cubes $E_{j_{p+1},k}$ of edge length $\ell_{j_{p+1}}$ in each $Q_{p+1,i'}$. We get $2^{dj_{p+1}}$ cubes $E_{j_{p+1},k}$, and set
$$
Q_{p+1}=\bigcup_{i'=1}^{2^{d(j_{p+1}-1)}}Q_{p+1,i'};\quad E_{j_{p+1}}=\bigcup_{k=1}^{2^{dj_{p+1}}}E_{j_{p+1},k}.
$$
For $p+1=m$ we obtain the desired set $E_n$.

\section{Main results}

Our first theorem shows that under certain assumptions $|R_{\nu}^s|^2$ is comparable with its average value on a set of ``big'' measure, and this
property holds not only on Cantor sets. It means that the distribution of values of Riesz transform is uniform in a certain sense.

Set $B(x,r):=\{y\in\R^d:\ |y-x|<r\}$, and denote by $\Sigma_s$ the class of nonnegative Borel measures $\eta$ in $\R^d$ such that

\begin{equation}\label{f21}
\eta(B(x,r))\le r^s \quad\text{for all}\ x\in\R^d\text{ and }r>0.
\end{equation}

\begin{theorem}\label{th21} Suppose that $\eta\in\Sigma_s$, $\|\eta\|<\infty$, $R_{\eta}^s(x)$ exists $\eta$-a.e., $\pmb|\mathfrak{R}_{\eta}^s\pmb|\le1$, and
\begin{equation}\label{f22}
\|R_{\eta}^s\|_{L^2(\eta)}^2\ge a\|\eta\|,\quad a>0.
\end{equation}
Then for every $b\in(0,a)$ we have
\begin{equation}\label{f23}
\eta\{x:|R_{\eta}^s(x)|^2>b\}>c(a-b)^2\|\eta\|,\quad c=c(d,s).
\end{equation}

On the other hand, obviously \eqref{f23} implies \eqref{f22} with $bc(a-b)^2$ instead of $a$. The analogous statements hold for $R_{\eta,\ast}^s$.
\end{theorem}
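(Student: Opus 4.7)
My plan is to reduce the estimate to an $L^4(\eta)$-bound on $R_\eta^s$ by a Cauchy--Schwarz argument and then to invoke nonhomogeneous Calder\'on--Zygmund theory. Set $A:=\{x:|R_\eta^s(x)|^2>b\}$. On $A^c$ we have $|R_\eta^s|^2\le b$, so splitting \eqref{f22} between $A$ and $A^c$ yields $\int_A|R_\eta^s|^2\,d\eta\ge(a-b)\|\eta\|$. Cauchy--Schwarz then gives
\begin{equation*}
(a-b)\|\eta\|\le\int_A|R_\eta^s|^2\,d\eta\le\Bigl(\int|R_\eta^s|^4\,d\eta\Bigr)^{1/2}\eta(A)^{1/2},
\end{equation*}
so the theorem reduces to the $L^4$-bound
\begin{equation*}
\int|R_\eta^s|^4\,d\eta\le C(d,s)\|\eta\|.
\end{equation*}

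This $L^4$-bound is the main step, and the only place where the full strength of the hypotheses is used. The kernel $K^s$ is an antisymmetric standard $s$-dimensional Calder\'on--Zygmund kernel, $\eta$ satisfies the growth condition \eqref{f21}, and the truncations $\mathfrak R_{\eta,\e}^s$ are uniformly bounded on $L^2(\eta)$ by assumption. The nonhomogeneous Calder\'on--Zygmund theory of Nazarov--Treil--Volberg extrapolates this $L^2$-boundedness to uniform $L^4(\eta)$-boundedness $\sup_\e\|\mathfrak R_{\eta,\e}^s\|_{L^4(\eta)\to L^4(\eta)}\le C(d,s)$. Applying this bound to the constant function $1\in L^4(\eta)$ yields $\|R_{\eta,\e}^s\|_{L^4(\eta)}\le C(d,s)\|\eta\|^{1/4}$, and Fatou's lemma combined with the $\eta$-a.e.\ existence of $R_\eta^s$ transfers the same bound to $R_\eta^s$ itself.

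Combining the two displays gives $\eta(A)\ge c(a-b)^2\|\eta\|$ with $c=c(d,s)=C(d,s)^{-4}$; strict inequality comes for free from $a>b$. The assertion for the maximal operator $R_{\eta,*}^s$ is handled identically once the extrapolated $L^4$-bound on $\mathfrak R_{\eta,\e}^s$ is replaced by the analogous $L^4$-bound on $R_{\eta,*}^s$, provided by the Cotlar-type inequality in the Nazarov--Treil--Volberg framework. The single nontrivial input is thus the $L^2\to L^4$ extrapolation; once that is accepted, the rest is a one-line Cauchy--Schwarz estimate.
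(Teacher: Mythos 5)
Your proposal is correct and follows essentially the same route as the paper: the whole weight of the argument rests on the Nazarov--Treil--Volberg extrapolation applied to $f\equiv1$ with $p=4$, giving $\int|R_\eta^s|^4\,d\eta\le C(d,s)\|\eta\|$ (and the Cotlar-type bound for $R_{\eta,\ast}^s$), exactly as in Section 3. The only difference is cosmetic: where the paper packages the final step as Lemma \ref{le31}, you run the equivalent second-moment argument directly, splitting \eqref{f22} over $\{|R_\eta^s|^2>b\}$ and its complement and applying Cauchy--Schwarz, which yields the same constant dependence $c=c(d,s)$.
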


We deduce Theorem \ref{th21} in Section 3 from a deep result by Nazarov, Treil and Volberg (Theorem 1.1 in \cite{NTV}). In Section 4 we obtain the following estimates for $\|R_{\mu}^s\|_{L^2(\mu)}^2$
and $\pmb|\mathfrak{R}_{\mu}^s\pmb|$. As before, we denote by $\mu$ the probability measure
uniformly distributed on each cube $E_{n,k}$ with $\mu(E_{n,k})=2^{-dn}$.

\begin{theorem}\label{th22} Let an integer $d\ge1$ and $s\in(0,d)$ be given. There are constants $\a\in(0,\frac12)$, $T\in(1,\frac1{2\a})$, depending only on $d$, $s$, and such that for any positive numbers $\s_1,\dots,\s_n$, satisfying \eqref{f11} with $1\le j\le n-1$, and for the corresponding Cantor set $E_n$,
\begin{align}
c\biggl[\sum_{j=1}^n\t_j^2\biggr]^{1/2}&\le\|R_{\mu}^s\|_{L^2(\mu)}\le C\biggl[\sum_{j=1}^n\t_j^2\biggr]^{1/2},\quad \t_j=\frac{2^{-dj}}{\ell_j^s},\label{f24}\\
c\biggl[\sum_{j=1}^n\t_j^2\biggr]^{1/2}&\le\pmb|\mathfrak{R}_{\mu}^s\pmb|\le C\biggl[\sum_{j=1}^n\t_j^2\biggr]^{1/2},\label{f25}
\end{align}
where the positive constants $c,C$ depend only on $d$ and $s$.
\end{theorem}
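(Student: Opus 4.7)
The plan is to split Theorem~\ref{th22} into four inequalities and to handle each pair separately; the two lower bounds share a direct calculation, while the two upper bounds rely on a $T(1)$-type argument.

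\textbf{Lower bound in \eqref{f24}.} For $x\in E_n$, I would decompose $R_\mu^s(x)$ into a ``martingale'' sum indexed by the special generations $j_p$. Let $E_{j,k(x)}$ denote the cube of generation $j$ containing $x$, and set
\[
V_p(x)=\int_{E_{j_p,k(x)}\setminus E_{j_{p+1},k(x)}}K^s(y-x)\,d\mu(y),\qquad p=1,\dots,m-1,
\]
so that $R_\mu^s(x)=\sum_p V_p(x)+(\text{boundary term})$. The regularization inequality $\ell_{j_{p+1}}\le\a 2^{-(j_{p+1}-j_p)}\ell_{j_p}$ in \eqref{f14} guarantees that $E_{j_{p+1},k(x)}$ is geometrically much smaller than $E_{j_p,k(x)}$, so from the viewpoint of the integral defining $V_p(x)$ the inner cube is essentially a point, and $|V_p(x)|\asymp \t_{j_p}$ with a direction pointing from that corner toward the center of $E_{j_p,k(x)}$. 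Integrating $|R_\mu^s|^2$ against $\mu$ and using that different $V_p$'s are nearly orthogonal---this comes from the symmetry of the corner-choice at each level, which makes the conditional expectation of $V_p\cdot V_q$ small for $p\ne q$---yields $\|R_\mu^s\|_{L^2(\mu)}^2\gtrsim \sum_p \t_{j_p}^2$. Inside each block $[j_p,j_{p+1})$ the edge lengths halve, so $\t_{j+1}/\t_j=2^{s-d}<1$, hence $\sum_j\t_j^2\asymp\sum_p\t_{j_p}^2$; this gives the claimed lower bound.

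\textbf{Upper bound in \eqref{f24}.} Expand
\[
\|R_\mu^s\|_{L^2(\mu)}^2=\iiint K^s(y-x)\cdot K^s(z-x)\,d\mu(y)\,d\mu(z)\,d\mu(x),
\]
and split the triple integral according to the largest index $j$ for which $y,z$ lie in a common cube $E_{j,k}$. The size bound $|K^s(\cdot)|\le|\cdot|^{-s}$ combined with the separation estimate \eqref{f16} between sibling cubes at every scale shows that the level-$j$ contribution is $\lesssim \t_j^2$; summing gives the upper bound.

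\textbf{Estimate \eqref{f25}.} The lower bound is free: since $\mu$ is a probability measure, $\|1\|_{L^2(\mu)}=1$, while $\mathfrak{R}_{\mu,\e}^s 1=R_{\mu,\e}^s$, so
\[
\pmb|\mathfrak{R}_\mu^s\pmb|\ge \sup_{\e>0}\|R_{\mu,\e}^s\|_{L^2(\mu)}\ge \|R_\mu^s\|_{L^2(\mu)}
\]
by Fatou, and the lower bound in \eqref{f24} applies. For the upper bound in \eqref{f25} I would invoke the non-homogeneous $T(1)$ theorem of Nazarov--Treil--Volberg (the same tool used in Theorem~\ref{th21}). To verify its testing condition on each Cantor cube $Q=E_{j,k}$, observe by self-similarity that $\mu|_Q$ is, up to scaling by $\ell_j$ and renormalization by $2^{-dj}$, a copy of the Cantor measure for the tail sequence $\s_j,\dots,\s_n$. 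The already-established upper bound in \eqref{f24} applied to this subset gives $\|\mathfrak{R}_{\mu,\e}^s\chi_Q\|_{L^2(\mu)}^2\lesssim \mu(Q)\sum_{i\ge j}\t_i^2\lesssim\mu(Q)\cdot\bigl(\sum_{i=1}^n\t_i^2\bigr)$; this is exactly the testing estimate required, and the theorem furnishes the matching bound on $\pmb|\mathfrak{R}_\mu^s\pmb|$.

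\textbf{Main obstacle.} The delicate step is the approximate orthogonality of the martingale differences $V_p$ in the lower bound: one must quantify how much the $V_p(x)$ deviates from its ``ideal'' direction toward the corner at scale $j_{p+1}$, and control cross terms $\int V_p\cdot V_q\,d\mu$ using the $\a$-shrinkage \eqref{f14}. The parameters $\a$ and $T$ in the construction must be chosen small/large enough to make both the size estimate $|V_p|\asymp \t_{j_p}$ and the near-orthogonality robust; this is precisely what fixes $\a,T$ as functions of $d$ and $s$ in the statement.
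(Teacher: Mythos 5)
Your overall architecture (block decomposition along the regularized generations plus quasi\-/orthogonality for the lower bound; lower bound in \eqref{f25} from \eqref{f24}; a $T(1)$/NTV argument for the operator upper bound) matches the paper's, and your reductions $\pmb|\mathfrak{R}_{\mu}^s\pmb|\ge\|R_\mu^s\|_{L^2(\mu)}$ and $\sum_j\t_j^2\approx\sum_p\T_p^2$ (the paper's \eqref{f411}, with $\T_p=2^{-dj_p}/\ell_{j_p}^s$) are correct. But the core of your lower bound has a genuine gap. You decompose along the Cantor cubes, $V_p(x)=\int_{E_{j_p}(x)\setminus E_{j_{p+1}}(x)}K^s(y-x)\,d\mu(y)$, and claim $|V_p(x)|\asymp\t_{j_p}$ pointwise, ``pointing from that corner toward the center of $E_{j_p,k(x)}$''. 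This fails in both directions. First, the regularized set is expressly non-corner: $x$ sits near the center of one of the $2^{d(j_{p+1}-j_p-1)}$ partition subcubes of $E_{j_p}(x)$, which may lie near the middle of $E_{j_p}(x)$; for such $x$ the far field from the nearly symmetric surrounding mass largely cancels, so no uniform pointwise lower bound $c\,\t_{j_p}$ holds. Second, your $V_p$ also contains the near field from the $2^d-1$ sibling cubes of generation $j_{p+1}$ inside $Q_{p+1}(x)$, of size $\approx 2^{-dj_{p+1}}/(T\ell_{j_{p+1}})^s$, which by \eqref{f14} can be far larger than $\t_{j_p}$ (e.g.\ $j_{p+1}=j_p+1$ with $\a$ small); so the bound $|V_p|\lesssim\t_{j_p}$, which your cross-term estimate needs, also fails. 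The paper circumvents exactly this by decomposing with the auxiliary cubes: $\xi_p(x)=\int_{Q_p(x)\setminus Q_{p+1}(x)}$, so that $\xi_p$ sees the $2^d-1$ sibling $E_{j_p}$-cubes at distance $\approx T\ell_{j_p}$ — an essentially constant vector of size $\approx\T_p/T^s$, which is the true source of the lower bound (it comes from the separation parameter $T$, not from $x$ being near a corner) — handles the own-cube term by an antisymmetry (``half of the points'') argument, uses $\int_{Q_{p,i}}\xi_p\,d\mu=0$ for the cross terms, and finally picks $\a$ small relative to $T$ so that $cT^{-2s}\sum_p\T_p^2$ beats $CT\a^{\d}\sum_p\T_p^2$.

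The upper bound in \eqref{f24} as you propose it also cannot work: using only $|K^s(y-x)|\le|y-x|^{-s}$, the triple integral is at least as large as $\int\bigl(\int|y-x|^{-s}d\mu(y)\bigr)^2d\mu(x)\approx\bigl(\sum_j\t_j\bigr)^2$, which in the AD-regular case $\t_j\equiv\t$ gives $n^2\t^2$ instead of the required $n\t^2$; cancellation (mean-zero building blocks and quasi-orthogonality) is indispensable, and that is how the estimate cited by the paper ([ENV, Corollary~3.5], to which the proof of the upper bound in \eqref{f25} is referred) proceeds. Also, your appeal to ``separation at every scale'' is unavailable here: at non-special generations $j\notin J$ the sibling cubes $E_{j,k}$ touch; only the cubes $Q_{p,i}$ at the regularized generations are separated, by \eqref{f16} — precisely the extra difficulty of these non-corner sets. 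Your $T(1)$ plan for the upper bound in \eqref{f25} is reasonable in outline and close in spirit to the quoted argument, but as you set it up the testing condition rests on the \eqref{f24}-type upper bound for the restricted copies of the measure, so the gap just described propagates into it.
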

(We use the same notation $\t_j$ for values slightly different from the ones in  \eqref{f12}. Clearly, the corresponding relations in both cases are equivalent.)

Set $\eta:=c\,[\sum_{j=1}^n\t_j^2]^{-1/2}\mu$. If $c=c(d,s)>0$ is small enough then $\eta\in\Sigma_s$, and
$\pmb|\mathfrak{R}_{\eta}^s\pmb|\le1$ by the upper bound in \eqref{f25}. Moreover, the first inequality in \eqref{f24} implies \eqref{f22} with $a=a(d,s)$. Thus, Theorem \ref{th21} immediately yields the inequality
\begin{equation}\label{f26}
\mu\bigg\{x:|R_{\mu}^s(x)|^2> c\sum_{j=1}^n\t_j^2\bigg\}>\d_0,\quad \d_0=\d_0(d,s)>0.
\end{equation}
The existence of Cantor-type sets satisfying \eqref{f26} was established in \cite[Section 7]{ENV} using probabilistic arguments. But a concrete set was not presented. The particular case $d=2$, $s=1$, $\s_j=4^{-j}$, was considered in
\cite{AE}. A more general class of plane corner Cantor sets was treated in \cite{E1}.

Clearly, \eqref{f26} implies the estimates from below in \eqref{f24}, \eqref{f25} (in fact, the estimates from above were obtained in \cite{ENV} -- see Section 4 of the present paper for details). In Section 5 we give a completely different proof of \eqref{f26} without the use of Theorem 1.1 in \cite{NTV} and of Theorem~\ref{th22}. This independent approach allows us to consider a more general class of measures and wider range of $s$.

In Section 6 we consider the capacity $\g_{s,+}(E)$ of a compact set $E\subset\R^d$ defined by the equality
$$
\g_{s,+}(E):=\sup\{\|\nu\|:\nu\in M_+(E),\ \|R_{\nu}^s\|_{L^\infty(\R^d)}\le1\},
$$
where $M_+(E)$ is the class of positive Radon measures supported on $E$.
For $d=2,\ s=1$, $\g_{1,+}(E)$ is the analytic capacity $\g_+$, which is comparable with the analytic capacity $\g$ by the remarkable result of Tolsa \cite{To}. For $s=d-1,\ d\ge2$, $\g_{s,+}$ is comparable with the Lipschitz harmonic capacity (see \cite{V} and \cite[Section 10]{ENV} for details and references).

\begin{theorem}\label{th23} Let $d\ge1$, $s\in(0,d)$. For any finite sequence of positive numbers $\s_1,\dots,\s_n$, satisfying \eqref{f11} with $1\le j\le n-1$,
\begin{equation}\label{f27}
c\biggl[\sum_{j=1}^n\biggl(\frac{2^{-dj}}{\s_j^s}\biggr)^2\biggr]^{-1/2}\le\g_{s,+}(E_n)
\le C\biggl[\sum_{j=1}^n\biggl(\frac{2^{-dj}}{\s_j^s}\biggr)^2\biggr]^{-1/2},
\end{equation}
where the positive constants $c,C$ and the parameters $\a$, $T$ of the corresponding Cantor set $E_n$ depend only on $d$ and $s$.
\end{theorem}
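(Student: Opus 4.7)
I would prove the two inequalities of \eqref{f27} separately, leveraging Theorem~\ref{th22}. The comparability $\ell_j \approx \s_j$ from \eqref{f14} implies that the sums $\sum_j \t_j^2$ of Theorem~\ref{th22} and $\sum_j (2^{-dj}/\s_j^s)^2$ appearing in \eqref{f27} agree up to constants depending only on $d,s,\a$, so it suffices to work throughout with $\t_j = 2^{-dj}/\ell_j^s$.

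\textbf{Lower bound.} I would produce an explicit admissible competitor. Set
\[
\nu_0 := c_0 \Big[\sum_{j=1}^n \t_j^2\Big]^{-1/2}\,\mu
\]
for a small constant $c_0 = c_0(d,s) > 0$. Its total mass equals $c_0[\sum \t_j^2]^{-1/2}$, and to show that $\nu_0$ is admissible for $\gamma_{s,+}(E_n)$ it suffices to verify $\|R_{\nu_0}^s\|_{L^\infty(\R^d)} \leq 1$, which reduces to the pointwise estimate $|R_\mu^s(x)| \leq C[\sum_{j=1}^n \t_j^2]^{1/2}$ for every $x \in \R^d$. I would prove this by decomposing $R_\mu^s(x)$ according to the scales $\ell_{j_p}$ of the regularized construction, bounding the scale-$j_p$ contribution by $C\t_{j_p}$ via the cancellation within each cube $E_{j_p,k}$ and the scale separation \eqref{f16}, then combining the scales by Cauchy--Schwarz. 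This yields $\gamma_{s,+}(E_n) \geq \|\nu_0\| = c_0[\sum \t_j^2]^{-1/2}$.

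\textbf{Upper bound.} Let $\nu \in M_+(E_n)$ with $\|R_\nu^s\|_{L^\infty(\R^d)} \leq 1$. A standard Frostman-type argument (evaluating truncated Riesz transforms near centers of balls) yields the $s$-growth $\nu(B(x,r)) \leq C_1 r^s$, so $\nu/C_1 \in \Sigma_s$. Combining this growth with the pointwise bound $\|R_\nu^s\|_\infty \leq 1$ and the nonhomogeneous $T1$-type theorem of Nazarov--Treil--Volberg already cited in the paper, one obtains $\pmb|\mathfrak{R}_\nu^s\pmb| \leq C_2$. The remaining task is to establish the lower bound
\[
\pmb|\mathfrak{R}_\nu^s\pmb| \geq c\,\|\nu\| \Big[\sum_{j=1}^n \t_j^2\Big]^{1/2}
\]
for every $\nu \in M_+(E_n) \cap \Sigma_s$, extending the lower estimate of \eqref{f25} from $\mu$ to general $\nu$. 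I would do this by averaging the bilinear form $\langle \mathfrak{R}_\nu^s f, g\rangle_{L^2(\nu)}$ over the permutation symmetries of the Cantor construction, which act transitively on the cubes $\{E_{n,k}\}$, thereby reducing to the case $\nu = \|\nu\|\mu$ where Theorem~\ref{th22} applies directly. Combined with the upper bound $\pmb|\mathfrak{R}_\nu^s\pmb| \leq C_2$, this gives $\|\nu\| \leq C[\sum \t_j^2]^{-1/2}$.

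\textbf{Main obstacle.} The delicate point is the last step of the upper bound: extending the $L^2$ lower bound from the single symmetric measure $\mu$ to an arbitrary $\nu \in M_+(E_n) \cap \Sigma_s$. The symmetrization over the Cantor automorphism group is the natural route, but must be executed so that the test functions realizing the extremal $L^2(\mu)$ norm of $\mathfrak{R}_\mu^s$ descend to functions in $L^2(\nu)$ with comparable norms after averaging; this is exactly where the $\Sigma_s$-growth of $\nu$ and the scale separation \eqref{f14}, \eqref{f16} from the regularization are essential, since without them intermediate scales could interact with the kernel in ways that destroy the cancellation underlying Theorem~\ref{th22}.
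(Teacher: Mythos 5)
Your lower bound rests on the pointwise estimate $\|R_\mu^s\|_{L^\infty(\R^d)}\le C\bigl[\sum_{j=1}^n\t_j^2\bigr]^{1/2}$, and this estimate is false in general. The scale decomposition only gives $|\xi_p(x)|\le C\T_p$, hence a bound by $\sum_p\T_p$; Cauchy--Schwarz goes the wrong way here, since it bounds $\sum_p\T_p$ by $\sqrt m\,\bigl[\sum_p\T_p^2\bigr]^{1/2}$ and not the reverse. Indeed, at the coordinatewise extreme point $x^\ast\in E_n$ (follow the ``bottom-left'' corner cube at every stage), every $y\in E_n$ satisfies $y-x^\ast\ge0$ componentwise, so all scales contribute with the same sign and $|R_\mu^s(x^\ast)|\ge c(d,s)\int|y-x^\ast|^{-s}\,d\mu(y)\ge c'(d,s)\sum_p\T_p$; in the critical case when all $\t_j$ are comparable (e.g. $\s_j^s\approx2^{-dj}$) this is of order $n$, while $\bigl[\sum_j\t_j^2\bigr]^{1/2}\approx\sqrt n$. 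Thus $c\bigl[\sum_j\t_j^2\bigr]^{-1/2}\mu$ is not admissible for the $L^\infty$ definition of $\g_{s,+}$; the explicit-competitor route only yields the much smaller $\ell^1$-type bound $\g_{s,+}(E_n)\gtrsim\bigl[\sum_j\t_j\bigr]^{-1}$, i.e. the classical capacity scale (compare the remark at the end of Section 7). The paper's lower bound goes instead through the characterization \eqref{f61} of $\g_{s,+}$ from \cite{V}: the measure $\eta=c\bigl[\sum_j\t_j^2\bigr]^{-1/2}\mu$ lies in $\Sigma_s$ and satisfies $\pmb|\mathfrak{R}_\eta^s\pmb|\le1$ by the upper bound in \eqref{f25}, and the (nontrivial) comparability \eqref{f61} converts this $L^2$ information into the capacity bound. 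Some substitute of this kind for your missing pointwise estimate is unavoidable.

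For the upper bound you correctly isolate the key claim, $\pmb|\mathfrak{R}_\nu^s\pmb|\ge c\,\|\nu\|\bigl[\sum_j\t_j^2\bigr]^{1/2}$ for arbitrary $\nu\in M_+(E_n)$ with $s$-growth, but the proposed symmetrization cannot prove it. The ``permutation symmetries of the Cantor construction'' are not global isometries of $\R^d$: the isometry group of $E_n$ is essentially the symmetry group of the outer cube $Q_{1,1}$, of order at most $2^d d!$, so it does not act transitively on the $2^{dn}$ cubes $E_{n,k}$; the piecewise self-similar maps that do permute the cubes are not isometries and do not intertwine with the nonlocal Riesz kernel, so averaging the bilinear form over them does not reduce a general $\nu$ to $\|\nu\|\mu$. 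A correct general statement of the type you need is Tolsa's Wolff-potential lower bound for $\pmb|\mathfrak{R}_\nu^s\pmb|$ in \cite{T1} -- precisely the ``rather complicated'' result the paper is written to avoid. The paper's actual mechanism is different: induction on $n$ via the comparison capacity $\g_{s,+}^\mu$ of \eqref{f62}--\eqref{f63}, a stopping index $K$ with $S_K\le\frac12S_m<S_{K+1}$, the localization lemma of \cite{MPV} (inequality \eqref{f613}) to control $\nu$ on the pieces $E_{j_{K+1}-1,l}\cap E_n$, and Christ's local $T(b)$ theorem applied to the measure $\g_{s,+}(E_n)\,\HH^d|_{E_{j_K}}/\HH^d(E_{j_K})$ to show its Riesz operator is bounded, which closes the induction. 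Without either Tolsa's theorem or an argument of this local-$T(b)$/induction type, your upper bound remains unproven.
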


For corner Cantor sets and $d=2,\ s=1$ (that is for analytic capacity $\g_+$), the estimates \eqref{f27} were
obtained in \cite{E} under the additional assumption $\s_{j+1}/\s_j\le\l<1/3$. A different proof has been given in
\cite{T}. The corresponding inequalities for $\g$ were proved in \cite{MTV} (before the Tolsa's result \cite{To} about comparability of $\g$ and $\g_+$).
The case of the Lipschitz harmonic capacity (i.~e. $d\ge2,\ s=d-1$) was treated in \cite{MT} under the assumptions
$(2+\d)\s_{j+1}\le\s_j,\ \d>0$, and $\t_{j+1}\le\t_j$. Thus, Theorem \ref{th23} is a refined version of these results for ``regularized" Cantor sets.
It is noted at the end of \cite{MT} that \eqref{f27} holds under the same assumptions for the signed Riesz capacity $\g_s$ as well. In fact, our proof of Theorem \ref{th23} is a modification of the arguments in \cite{MT} and \cite{MPV}, and these arguments also work for $\g_s$ and for ``regularized" Cantor sets under the additional assumption $(2+\d)\s_{j+1}\le\s_j$, but without monotonicity of $\t_{j}$. It is known that $\g_s\approx\g_{s,+}$ for $d=2$, $s=1$ \cite{To}, for $d\ge2$, $0<s<1$ \cite[Theorem 1.1]{MPV}, and for $d>2$, $s=d-1$ \cite{V}. As far as we know, the validity of this relation in other cases is an open problem. The extension to bilipschitz images of corner Cantor sets from \cite{MT} is given in \cite{GPT}.

In Section 7 we give ``the limit case'' of Theorem \ref{th23}, when the sequence $\{\s_j\}$ is infinite. We use the obtained estimates to demonstrate the sharpness of results in \cite{ENV}. In particular, we consider the problem of comparison of the capacity $\g_{s,+}$ and Hausdorff content.

\section{Proof of Theorem \ref{th21}}

\begin{lemma}\label{le31} If $f$ is a non-negative function, $\nu$ is an arbitrary probability measure, and
$$
\int f\,d\nu\ge L,\quad \int f^2\,d\nu\le AL^2,\quad L>0,
$$
then for $\d\in(0,1)$ we have
$$
\nu\{x:f\ge\d L\}\ge\frac{(1-\d)^2}{4A}\,.
$$
\end{lemma}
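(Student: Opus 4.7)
The plan is to give a standard Paley–Zygmund style argument. Set $E:=\{x:f(x)\ge \delta L\}$ and split
$$
\int f\,d\nu = \int_E f\,d\nu + \int_{E^c} f\,d\nu.
$$
On $E^c$ we have $f<\delta L$, so the second integral is bounded by $\delta L\cdot\nu(E^c)\le \delta L$ (here we use that $\nu$ is a probability measure). Combined with the hypothesis $\int f\,d\nu\ge L$, this yields the lower bound
$$
\int_E f\,d\nu \ge (1-\delta)L.
$$

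Next, I apply Cauchy–Schwarz on $E$:
$$
\int_E f\,d\nu \le \Bigl(\int_E f^2\,d\nu\Bigr)^{1/2}\nu(E)^{1/2}\le \Bigl(\int f^2\,d\nu\Bigr)^{1/2}\nu(E)^{1/2}\le \sqrt{A}\,L\,\nu(E)^{1/2}.
$$
Chaining the two estimates gives $(1-\delta)L\le \sqrt{A}\,L\,\nu(E)^{1/2}$, hence
$$
\nu(E)\ge \frac{(1-\delta)^2}{A},
$$
which is in fact stronger than the stated bound $(1-\delta)^2/(4A)$ and so certainly implies it.

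There is no real obstacle: the only subtle point is keeping track of the fact that $f\ge 0$ is used to make $\int_{E^c} f\,d\nu$ nonnegative and bounded from above pointwise, and that $\nu$ being a probability measure lets us replace $\nu(E^c)$ by $1$. The slack factor of $4$ in the denominator of the lemma suggests either that the author anticipates a less efficient variant being handy elsewhere, or simply prefers a cleaner constant; the two-line Cauchy–Schwarz argument above suffices for both forms.
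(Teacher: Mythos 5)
Your proof is correct: the splitting $\int f = \int_E f + \int_{E^c} f$ with $E=\{f\ge\d L\}$, the bound $\int_{E^c} f\,d\nu\le\d L$ (using $f\ge0$ and $\nu$ a probability measure), and the Cauchy--Schwarz step $\int_E f\,d\nu\le(\int f^2\,d\nu)^{1/2}\nu(E)^{1/2}\le\sqrt{A}\,L\,\nu(E)^{1/2}$ are all valid, and they give $\nu(E)\ge(1-\d)^2/A$, which indeed implies the stated bound. This is the classical Paley--Zygmund route, and it differs from the paper's argument: the paper never invokes Cauchy--Schwarz, but instead truncates at the level $2AL/(1-\d)$, noting that on $\{f>2AL/(1-\d)\}$ one has $f\le\frac{1-\d}{2AL}f^2$ pointwise, so that the contribution of the high set to $\int f\,d\nu$ is at most $\frac{(1-\d)L}{2}$; it then assumes $\nu\{f\ge\d L\}<\frac{(1-\d)^2}{4A}$ and bounds $\int_{f\le 2AL/(1-\d)}f\,d\nu$ from above by $\d L+\frac{2AL}{1-\d}\cdot\frac{(1-\d)^2}{4A}=\frac{(1+\d)}{2}L$, which contradicts the lower bound $L-\frac{(1-\d)L}{2}=\frac{(1+\d)}{2}L$ coming from the hypothesis $\int f\,d\nu\ge L$. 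The paper's truncation-and-contradiction scheme costs the extra factor $4$ in the denominator; your two-line argument is shorter and yields the sharper constant $(1-\d)^2/A$. Since the lemma is applied in the proof of Theorem~\ref{th21} only to produce some constant of the form $c(a-b)^2$ with $c=c(d,s)$, the loss of the factor $4$ is immaterial there, and either proof serves equally well.
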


\begin{proof} Clearly,
$$
\int_{f>2AL/(1-\d)} f\,d\nu\le \int \frac{1-\d}{2AL}f^2\,d\nu\le \frac{(1-\d)L}2\,.
$$
Assume that
$$
\nu\{x:f\ge\d L\}<\frac{(1-\d)^2}{4A}\,.
$$
Then
$$
L-\frac{(1-\d)L}2\le\int_{f\le2AL/(1-\d)} f\,d\nu<\d L+\frac{2AL}{1-\d}\cdot\frac{(1-\d)^2}{4A}= \frac{(1+\d)}2L\,.
$$
Since the left hand side is equal to the same number $\frac{(1+\d)}2L$, we come to a contradiction.
\end{proof}

\begin{proof}[Proof of Theorem \ref{th21}] By \cite[Theorem 1.1, p.~467--468]{NTV}, the uniform boundedness of the  cut-off Calder\'on-Zygmund operators $T^{(\e)}$ on $L^2(\eta)$ implies the boundedness of $T$ and of the corresponding maximal singular operator on $L^p(\eta)$ for every $p\in(1,\infty)$. Applying this theorem for $f(x)\equiv1$, $p=4$, we get
\begin{align*}
\int|R_{\eta}^s(x)|^4\,d\eta(x)=&\|R_{\eta}^s\|_{L^4(\eta)}\le(A')^4\|\eta\|,\\
&\|R_{\eta,\ast}^s\|_{L^4(\eta)}\le(A'')^4\|\eta\|,
\end{align*}
where the constants $A', A''$ depend only on $d$ and $s$ (the last statement follows from the proof of Theorem~1.1 in \cite{NTV}).
Lemma \ref{le31} with $f(x)=|R_{\eta}^s(x)|^2$, $\nu=\eta/\|\eta\|$, $L=a$, $A=(A')^4a^{-2}$, $\d L=b$, yields \eqref{f23}, since $\frac{(1-\d)^2}{4A}=\frac{(a-b)^2}{4(A')^4}=c(a-b)^2$.

The proof of the corresponding statement for $R_{\eta,\ast}^s$ is essentially the same.
\end{proof}

\section{Proof of Theorem \ref{th22}}

We need some notation. Let $x_{j_p,k}$, $w_{p,i}$ be the centers of $E_{j_p,k}$ and $Q_{p,i}$ correspondingly, and let $E_{j_p}(x)$, $Q_{p}(x)$ be the cubes $E_{j_p,k}$, $Q_{p,i}$, containing $x$. Set
\begin{align*}
\xi_p(x)&=\int_{Q_{p}(x)\setminus Q_{p+1}(x)}\frac{y-x}{|y-x|^{s+1}}\,d\mu(y),\quad x\in Q_{p+1},\quad p=1,\dots,m-1;\\
\xi_m(x)&=\int_{Q_m(x)}\frac{y-x}{|y-x|^{s+1}}\,d\mu(y),\quad x\in E_n.
\end{align*}
Obviously,
$$
R_\mu^s(x)=\sum_{p=1}^m\xi_p(x).
$$

\begin{lemma}\label{le41} There exists $T_0=T_0(d,s)>1$, such that for any $\a\in(0,(2T_0)^{-1})$ and $T\in(T_0,(2\a)^{-1})$ we have
\begin{equation}\label{f41}
c\biggl(\frac{2^{-dj_p}}{T^s\ell_{j_p}^s}\biggr)^2\le\int_{E_n}\xi_p(x)^2\,d\mu(x)\le C\biggl(\frac{2^{-dj_p}}{\ell_{j_p}^s}\biggr)^2,\quad p=1,\dots,m,
\end{equation}
with $c$ and $C$ depending only on $d,s$.
\end{lemma}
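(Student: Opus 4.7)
The plan exploits two scales of central symmetry: $\mu\vert_{Q_{p+1,i'}}$ is symmetric about $w_{p+1,i'}$, and $\mu\vert_{E_{j_p,k}}$ is symmetric about the center $c_k$ of each corner cube $E_{j_p,k}$. \textbf{Upper bound.} Write
\[
\xi_p(x)=\sum_{i'\neq i_{p+1}}\int_{Q_{p+1,i'}}K^s(y-x)\,d\mu(y)
\]
and Taylor-expand each summand around $w_{p+1,i'}$; the linear term vanishes by central symmetry, leaving $\mu(Q_{p+1,i'})K^s(w_{p+1,i'}-x)$ plus a quadratic error $O(\mu(Q_{p+1,i'})(T\ell_{j_{p+1}})^2/|w_{p+1,i'}-x|^{s+2})$. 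Fix $T_0$ so that $T\alpha\le 1/(8\sqrt d)$: then \eqref{f16} together with the grid geometry forces $|w_{p+1,i'}-x|\ge L/2$ for every $i'\neq i_{p+1}$, where $L:=2^{-(j_{p+1}-j_p-1)}\ell_{j_p}$, so the errors are a bounded fraction of the main terms. Viewing the centers $\{w_{p+1,i'}\}$ as a lattice of spacing $L$ inside $Q_p(x)$ (of diameter $\asymp\ell_{j_p}$) and using $s<d$, the main-term sum is bounded by $2^{-dj_{p+1}}L^{-d}\ell_{j_p}^{d-s}\asymp\theta_{j_p}$. Integrating against the probability measure $\mu$ gives the upper bound in \eqref{f41}; for $p=m$ there is no $Q_{p+1}$-excision and the self-contribution from $E_{n,k_x}$ is bounded directly by $\int_{E_{n,k_x}}|y-x|^{-s}\,d\mu(y)\lesssim\theta_n$.

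\textbf{Lower bound.} Test $\xi_p$ against the unit vector field $\vec v(x):=-\epsilon_{k_x}/\sqrt d$, where $\epsilon_{k_x}\in\{\pm1\}^d$ indicates which corner $E_{j_p,k_x}$ of $Q_p(x)$ contains $x$; by Cauchy--Schwarz it suffices to prove $\int\xi_p\cdot\vec v\,d\mu\ge c(d,s)T^{-s}\theta_{j_p}$. Decompose $\xi_p=\xi_p^{\mathrm{in}}+\xi_p^{\mathrm{out}}$ according to whether $y$ lies in $E_{j_p,k_x}$ or in one of the other $2^d-1$ corner cubes of $Q_p(x)$. The crucial point is that $\int\xi_p^{\mathrm{in}}\cdot\vec v\,d\mu=0$ \emph{exactly}: writing
\[
\xi_p^{\mathrm{in}}(x)=\int_{E_{j_p,k_x}\cap E_n}K^s(y-x)\,d\mu(y)-\int_{Q_{p+1}(x)}K^s(y-x)\,d\mu(y)
\]
(the second term absent when $p=m$), each piece integrates to zero against $\vec v$, because the integrand $K^s(y-x)\cdot\vec v(x)$ is antisymmetric under the involutions $(x,y)\mapsto(2c_{k_x}-x,2c_{k_x}-y)$ and $(x,y)\mapsto(2w_{p+1}-x,2w_{p+1}-y)$, while $\mu\otimes\mu$ is invariant and $\vec v(x)$ is preserved since both involutions fix the corner index $k_x$. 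For $\xi_p^{\mathrm{out}}$, Taylor expansion at $c_{k'}$ (legitimate since $|c_{k'}-x|\gtrsim T\ell_{j_p}\gg\ell_{j_p}$) together with the symmetry of $\mu\vert_{E_{j_p,k'}}$ gives
\[
\int_{E_{j_p,k'}\cap E_n}K^s(y-x)\,d\mu(y)=2^{-dj_p}K^s(c_{k'}-x)+O(T^{-s-2}\theta_{j_p}).
\]

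Using the explicit formula $c_{k'}-c_{k_x}=(T-\tfrac12)\ell_{j_p}(\epsilon_{k'}-\epsilon_{k_x})$ and the algebraic inequality $(\epsilon_{k'}-\epsilon_{k_x})\cdot\epsilon_{k_x}\le-2$ for every $k'\neq k_x$, \emph{every} summand of $\sum_{k'\neq k_x}K^s(c_{k'}-c_{k_x})\cdot(-\epsilon_{k_x}/\sqrt d)$ is non-negative, and the antipode $k'$ (where $\epsilon_{k'}=-\epsilon_{k_x}$) alone contributes at least $c(d,s)(T\ell_{j_p})^{-s}$. The relative variation of these quantities as $x$ ranges over $E_{j_p,k_x}$ is $O(1/T)$ and is absorbed by taking $T>T_0$ large enough; hence $\int\xi_p^{\mathrm{out}}\cdot\vec v\,d\mu\ge cT^{-s}\theta_{j_p}$, whence $\|\xi_p\|_{L^2(\mu)}\ge cT^{-s}\theta_{j_p}$, completing the lower bound.

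\textbf{Main obstacle.} The pointwise size of $\xi_p^{\mathrm{in}}$ is of order $\theta_{j_p}$, a factor $T^s$ \emph{larger} than the target lower bound, so no triangle-inequality-style split between $\xi_p^{\mathrm{in}}$ and $\xi_p^{\mathrm{out}}$ can succeed. The technical heart of the argument is the precise choice of the test field $\vec v(x)=-\epsilon_{k_x}/\sqrt d$: it is constant on every symmetry fiber used to annihilate $\xi_p^{\mathrm{in}}$, yet it is aligned with the direction toward the center of $Q_p(x)$ in which $\xi_p^{\mathrm{out}}$ genuinely points, so that the pairing isolates the smaller but non-cancelling ``out''-contribution that actually controls the $L^2$-norm.
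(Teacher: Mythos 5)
Your argument is correct, and the lower bound is obtained by a genuinely different mechanism than the paper's. The paper splits $\xi_p=I_1+I_2$ (same corner cube vs.\ the other corner cubes of $Q_p(x)$), proves the two-sided pointwise bound \eqref{f44} for $I_2$ together with the small-oscillation estimate \eqref{f43}, and then uses the antisymmetry $I_1(x')=-I_1(x'')$ under reflection about the center $x_{j_p,k}$ to conclude that on at least half of each $E_{j_p,k}$ the in-term does not cancel $I_2(x_{j_p,k})$; this yields the \emph{pointwise} bound $|\xi_p(x)|\gtrsim T^{-s}\Theta_p$ on half of $E_n$, which is then integrated. You instead run a duality/testing argument: pair $\xi_p$ with the piecewise-constant unit field $\vec v(x)=-\epsilon_{k_x}/\sqrt d$, annihilate the in-part \emph{exactly} via the antisymmetry of the double integral under the central reflections of $E_{j_p,k}$ and of $Q_{p+1,i'}$ (the same symmetries the paper exploits, there in the form $I_1(x')=-I_1(x'')$ and, in Lemma 4.2, $\int_{Q_{p,i}}\xi_p\,d\mu=0$), and then use that every out-term has non-negative projection on $\vec v$, the antipodal corner contributing $\gtrsim (T\ell_{j_p})^{-s}$, with $O(1/T)$ errors absorbed by choosing $T_0$ large; Cauchy--Schwarz finishes. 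Your route buys a cleaner, purely linear cancellation (no oscillation estimate \eqref{f43} and no ``good half'' selection), at the price of producing only the $L^2$ (in fact $L^1$ against $\vec v$) lower bound, whereas the paper's argument additionally shows $|\xi_p|$ is large on a set of measure $\ge\frac12$, information in the spirit of Theorem \ref{th21}. The upper bound is essentially the paper's (\eqref{f42} and \eqref{f44}), only phrased as a lattice sum over the centers $w_{p+1,i'}$ instead of a comparison with the uniformly spread measure.

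One small repair: you ``fix $T_0$ so that $T\a\le 1/(8\sqrt d)$,'' but the lemma's quantifiers only give $T\a<\tfrac12$ (any $\a<(2T_0)^{-1}$ and any $T<(2\a)^{-1}$ are allowed), so this condition cannot be enforced by the choice of $T_0$ alone. Fortunately it is not needed: since by \eqref{f16} each $Q_{p+1,i'}$ is the concentric cube of side $2T\ell_{j_{p+1}}<\tfrac12 L$ inside its partition cube of side $L=2^{-(j_{p+1}-j_p-1)}\ell_{j_p}$, any point $x\notin Q_{p+1,i'}$'s partition cube already satisfies $\|x-w_{p+1,i'}\|_\infty\ge L/2$, and likewise $|y-x|\ge L/4$ for $y\in Q_{p+1,i'}$ with $i'\ne i_{p+1}$; this separation is all your Taylor-expansion step uses, so the proof goes through under the stated hypotheses.
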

\begin{proof}
We have
\begin{align*}
\xi_p(x)&=\int_{E_{j_p}(x)\setminus Q_{p+1}(x)}\frac{y-x}{|y-x|^{s+1}}\,d\mu(y)\\
&+\int_{Q_{p}(x)\setminus E_{j_p}(x)}\frac{y-x}{|y-x|^{s+1}}\,d\mu(y):=I_1(x)+I_2(x),\quad p=1,\dots,m-1;\\
\xi_m(x)&=\int_{E_{n}(x)}\frac{y-x}{|y-x|^{s+1}}\,d\mu(y)+ \int_{Q_m(x)\setminus E_{n}(x)}\frac{y-x}{|y-x|^{s+1}}\,d\mu(y):=I_1(x)+I_2(x).
\end{align*}
By \eqref{f16}, the cubes $Q_{p+1,i}$ in $E_{j_p}(x)$ are separated. Hence, up to a constant, $I_1(x)$ is majorized
by the integral over the measure uniformly distributed on $E_{j_p}(x)$ with density $2^{-dj_p}\ell_{j_p}^{-d}$, that is
\begin{equation}\label{f42}
|I_1(x)|\le C\,\frac{2^{-dj_p}}{\ell_{j_p}^d}\int_0^{\ell_{j_p}}\frac{t^{d-1}}{t^s}\,dt
=C'(d,s)\frac{2^{-dj_p}}{\ell_{j_p}^s}\, ,\quad x\in E_{j_p},\quad p=1,\dots,m.
\end{equation}
Suppose that $x\in E_{j_p,k}$ (i.~e. $E_{j_p}(x)=E_{j_p,k}$). We claim, that for sufficiently big $T$,
\begin{equation}\label{f43}
|I_2(x)-I_2(x_{j_p,k})|<\frac12\,|I_2(x_{j_p,k})|.
\end{equation}
Indeed,
\begin{align*}
|I_2(x)-I_2(x_{j_p,k})|&\le\int_{Q_{p}(x)\setminus E_{j_p}(x)}\bigg|
\frac{y-x}{|y-x|^{s+1}}-\frac{y-x_{j_p,k}}{|y-x_{j_p,k}|^{s+1}}\bigg|\,d\mu(y)\\
&<C(s)\int_{Q_{p}(x)\setminus E_{j_p}(x)}\frac{|x-x_{j_p,k}|}{|y-x_{j_p,k}|^{s+1}}\,d\mu(y)
<C'(d,s)\frac{2^{-dj_p}}{T^{s+1}\ell_{j_p}^s}\,.
\end{align*}
On the other hand, for $T$ big enough we have
\begin{equation}\label{f44}
c(d,s)\frac{2^{-dj_p}}{(T\ell_{j_p})^s}<|I_2(x)|<C(d,s)\frac{2^{-dj_p}}{(T\ell_{j_p})^s},\quad
x\in E_{j_p}.
\end{equation}
The lower bound in \eqref{f44} implies \eqref{f43}.

Obviously, $I_1(x')=-I_1(x'')$ whenever $x',x''\in E_{j_p,k}$ and $x',x''$ are symmetric with respect to $x_{j_p,k}$.
Hence, for ``half'' of the points $x\in E_{j_p,k}$, the angle between the vectors $I_2(x_{j_p,k})$ and $I_1(x)$ is less then or equal to $\pi/2$. For these $x$ we have
\begin{align*}
|\xi_p(x)|&=|I_1(x)+I_2(x)|\ge|I_1(x)+I_2(x_{j_p,k})|-|I_2(x)-I_2(x_{j_p,k})|\\
&\ge|I_2(x_{j_p,k})|-\frac12|I_2(x_{j_p,k})|>c'(d,s)\frac{2^{-dj_p}}{(T\ell_{j_p})^s}\,.
\end{align*}
We get the lower bound in \eqref{f41}. The upper bound follows directly from \eqref{f42} and \eqref{f44}.
\end{proof}

\begin{lemma}\label{le42} Let $\d=\min(\frac12(d-s),\frac12)$. Then
\begin{equation}\label{f45}
\bigg|\int_{E_n}\xi_p(x)\xi_q(x)\,d\mu(x)\bigg|<CT\biggl(\frac{\a}{2}\biggr)^{\d|p-q|}
\frac{2^{-dj_p}}{\ell_{j_p}^s}\cdot\frac{2^{-dj_q}}{\ell_{j_q}^s}\,,\quad p\ne q,
\end{equation}
where $C$ depends only on $d$ and $s$.
\end{lemma}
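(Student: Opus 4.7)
Assume WLOG $p<q$. The proof combines the antisymmetry of $\xi_q$ on each cube $Q_{q,i}$ with the smoothness of $\xi_p$ on the same cube: the first gives $\int_{Q_{q,i}}\xi_q\,d\mu=0$, while the second lets one replace $\xi_p$ by a constant in the product integrand, picking up only its Lipschitz variation.

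\emph{Antisymmetry.} The recursive construction is symmetric about the centre $w_{q,i}$ of each $Q_{q,i}$: the reflection $\sigma_i\colon z\mapsto 2w_{q,i}-z$ preserves $\mu|_{Q_{q,i}}$ and permutes the family $\{Q_{q+1,i'}\}$ contained in $Q_{q,i}$ (or, when $q=m$, the corner cubes $\{E_{n,k}\}$), so that $\sigma_i(Q_{q+1}(x))=Q_{q+1}(\sigma_i x)$. Changing variables in the defining integral of $\xi_q(\sigma_i x)$ and using $K^s(-u)=-K^s(u)$ yields $\xi_q(\sigma_i x)=-\xi_q(x)$, hence $\int_{Q_{q,i}}\xi_q\,d\mu=0$.

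\emph{Smoothness and combination.} By nestedness, $Q_{q,i}$ sits inside unique $Q_{p+1,i'}\subset Q_{p,i''}$, so the integration region $Q_p\setminus Q_{p+1}$ defining $\xi_p(x)$ is the same for every $x\in Q_{q,i}$; from \eqref{f16} one has $|y-x|\ge 2^{-(j_{p+1}-j_p)}\ell_{j_p}$ for any $y$ in that region. Choosing $T_0$ so that $T\alpha\sqrt d<\tfrac12$ legitimises the mean-value bound $|K^s(y-x)-K^s(y-w)|\le C|x-w|/|y-x|^{s+1}$. A dyadic-annular decomposition at scales $r_k=2^k\cdot 2^{-(j_{p+1}-j_p)}\ell_{j_p}$, with $\mu(B(x,r_k))$ bounded by counting the $Q_{p+1}$-children meeting $B(x,r_k)$ (giving $\le C\min\{2^{kd}\cdot 2^{-dj_{p+1}},\,2^{-dj_p}\}$), yields after summing the geometric series the Lipschitz estimate
\[
|\xi_p(x)-\xi_p(w_{q,i})|\le CT\ell_{j_q}\cdot\frac{\theta_p}{\ell_{j_p}}\cdot 2^{\max(0,\,s+1-d)(j_{p+1}-j_p)}\qquad(x\in Q_{q,i}).
\]
Using the antisymmetry, $\int_{Q_{q,i}}\xi_p\xi_q\,d\mu=\int_{Q_{q,i}}[\xi_p-\xi_p(w_{q,i})]\xi_q\,d\mu$; bounding by the above estimate and by $|\xi_q|\le C\theta_q$ (upper half of Lemma \ref{le41}), and summing over $i$ with $\sum_i\mu(Q_{q,i})=1$, gives
\[
\Bigl|\int_{E_n}\xi_p\xi_q\,d\mu\Bigr|\le CT\theta_p\theta_q\cdot\frac{\ell_{j_q}}{\ell_{j_p}}\cdot 2^{\max(0,\,s+1-d)(j_{p+1}-j_p)}.
\]
Iterating \eqref{f14}, $\ell_{j_q}/\ell_{j_p}\le\alpha^{q-p}\,2^{-(j_q-j_p)}$; the factor $2^{-(j_{p+1}-j_p)}$ therein absorbs the $2^{\max(0,\,s+1-d)(j_{p+1}-j_p)}$ (since $s<d$), leaving a bound $\le CT(\alpha/2)^{q-p}\theta_p\theta_q$, which dominates $CT(\alpha/2)^{\delta|p-q|}\theta_p\theta_q$ because $\delta\le 1$ and $\alpha/2<1$. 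This is \eqref{f45}.

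\emph{Main obstacle.} The smoothness estimate is the delicate step: because $\mu$ is not $s$-AD-regular across block boundaries (the density ratio $\theta_{p+1}/\theta_p$ can be unbounded), one cannot appeal to a uniform mass bound, but must count $Q_{p+1}$-children geometrically. The hypothesis $s<d$ is precisely what prevents the resulting geometric sum from blowing up; the comparatively weak exponent $\delta=\min(\tfrac12(d-s),\tfrac12)$ in the statement leaves comfortable margin for this analysis.
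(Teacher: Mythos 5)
Your proposal is correct in substance and follows the same basic scheme as the paper's proof, only with the roles of $p$ and $q$ relabelled: the paper takes $p>q$ and works on the finer cubes $Q_{p,i}$, you take $p<q$ and work on $Q_{q,i}$; in both cases the two ingredients are the mean-zero property of the fine-scale term on its defining cubes (central symmetry of the construction, which you verify more explicitly than the paper's one-line ``by symmetry'') and the replacement of the coarse-scale term by its value at the centre via the kernel-gradient bound. Where you genuinely diverge is in how the variation integral $\int_{Q_p(x)\setminus Q_{p+1}(x)}|y-x|^{-(s+1)}\,d\mu(y)$ is controlled. The paper never works with the full power $s+1$: it splits $|y-x|^{-(s+1)}=|y-x|^{-(1-\d)}|y-x|^{-(s+\d)}$, uses the separation of the next-generation cubes only to the power $1-\d$, and compares the remaining integral with the uniformly spread measure, which is summable precisely because $s+\d<d$; this fractional splitting is the sole source of the exponent $\d|p-q|$ in \eqref{f45}. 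You keep the full gradient and instead estimate the integral by dyadic shells, counting the $Q_{p+1}$-children that a ball of radius $2^k\cdot2^{-(j_{p+1}-j_p)}\ell_{j_p}$ can meet. Done correctly, this yields the stronger decay $(\a/2)^{|p-q|}$, which of course implies \eqref{f45}, so your route buys a sharper exponent at the cost of a case analysis in $s$ versus $d-1$.

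In that case analysis there is one slip: for $s=d-1$ your shell sum $\sum_{k\le j_{p+1}-j_p}2^{k(d-s-1)}$ is not geometric but consists of roughly $j_{p+1}-j_p$ equal terms, so the correct Lipschitz bound carries an extra factor $(j_{p+1}-j_p)$ rather than the factor $2^{\max(0,s+1-d)(j_{p+1}-j_p)}=1$ you wrote (compare Case~2 in the paper's proof of Lemma \ref{le53}, where exactly this factor appears). The error is harmless here: in your last step the factor $(j_{p+1}-j_p)$ is absorbed by $2^{-(d-s)(j_{p+1}-j_p)}=2^{-(j_{p+1}-j_p)}$, so the final bound, even with exponent $|p-q|$, survives; but the statement of the intermediate estimate should be corrected. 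Two minor remarks: the pointwise bound $|\xi_q|\le C\,2^{-dj_q}\ell_{j_q}^{-s}$ is \eqref{f47}, a byproduct of \eqref{f42} and \eqref{f44} from the proof of Lemma \ref{le41}, not of its $L^2$ statement; and your hypothesis $\sqrt d\,T\a<\tfrac12$ is a mild extra smallness requirement on $\a$ (beyond $2T\a<1$) needed to legitimise the gradient bound, which is consistent with the regime in which the lemma is actually applied and with the analogous implicit requirement in the paper's own argument.
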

\begin{proof} By symmetry,
$$
\int_{Q_{p,i}}\xi_p(x)\,d\mu(x)=0,\quad p=1,\dots,m,\quad i=1,\dots,2^{d(j_p-1)}.
$$
Suppose that $p>q$. We have
\begin{equation}\label{f46}
\begin{split}
\int_{E_n}\xi_p(x)\xi_q(x)\,d\mu(x)&=\sum_{i=1}^{2^{d(j_p-1)}}\int_{Q_{p,i}}\xi_p(x)\xi_q(x)\,d\mu(x)\\
&=\sum_{i=1}^{2^{d(j_p-1)}}\int_{Q_{p,i}}\xi_p(x)[\xi_q(x)-\xi_q(w_{p,i})]\,d\mu(x).
\end{split}
\end{equation}
By \eqref{f42}, \eqref{f44},
\begin{equation}\label{f47}
|\xi_p(x)|<C\frac{2^{-dj_p}}{\ell_{j_p}^s}\,,\quad C=C(d,s),\quad x\in E_{j_p}.
\end{equation}
For $x\in Q_{p,i}$, we get
\begin{align*}
|\xi_q(x)-\xi_q(w_{p,i})|&\le\int_{Q_{q}(x)\setminus Q_{q+1}(x)}\bigg|
\frac{y-x}{|y-x|^{s+1}}-\frac{y-w_{p,i}}{|y-w_{p,i}|^{s+1}}\bigg|\,d\mu(y)\\
&<C\int_{Q_{q}(x)\setminus Q_{q+1}(x)}\frac{|x-w_{p,i}|}{|y-x|^{s+1}}\,d\mu(y)\\
&=C\int_{Q_{q}(x)\setminus Q_{q+1}(x)}\frac{|x-w_{p,i}|}{|y-x|^{1-\d}}\,\frac{d\mu(y)}{|y-x|^{s+\d}}
<C'\frac{T\ell_{j_p}}{\ell_{j_{q+1}}^{1-\d}}
\int_{Q_{q}(x)\setminus Q_{q+1}(x)}\frac{d\mu(y)}{|y-x|^{s+\d}}\,.
\end{align*}
As in Lemma \ref{le41}, we represent the last integral as the sum of the integrals over $E_{j_q}(x)\setminus Q_{q+1}(x)$ and $Q_{q}(x)\setminus E_{j_q}(x)$. The second integral is estimated exactly as in \eqref{f44}.
The first integral, as before, is majorized by the integral with uniformly distributed measure. Thus, the last bound does not exceed
\begin{multline*}
C\frac{T\ell_{j_p}}{\ell_{j_{q+1}}^{1-\d}}\bigg[\frac{2^{-dj_q}}{\ell_{j_q}^d}
\int_0^{\ell_{j_q}}\frac{t^{d-1}}{t^{s+\d}}\,dt+\frac{2^{-dj_q}}{(T\ell_{j_q})^{s+\d}}\bigg]
<C'\frac{T\ell_{j_p}}{\ell_{j_{q+1}}^{1-\d}}\cdot\frac{2^{-dj_q}}{\ell_{j_q}^{s+\d}}\\
=C'T\bigg(\frac{\ell_{j_p}}{\ell_{j_{q+1}}}\bigg)^{1-\d}\bigg(\frac{\ell_{j_p}}{\ell_{j_{q}}}\bigg)^{\d}
\frac{2^{-dj_q}}{\ell_{j_q}^{s}}
\le C'T\a^{\d(p-q)}2^{-\d(j_p-j_q)}\,\frac{2^{-dj_q}}{\ell_{j_q}^s},\quad C'=C'(d,s)
\end{multline*}
(in the last inequality we used the obvious relation $\ell_{j_p}\le\ell_{j_{q+1}}$ and \eqref{f14}).
Since $j_p-j_q\ge p-q$, we obtain the inequality
$$
|\xi_q(x)-\xi_q(w_{p,i})|<CT\a^{\d(p-q)}2^{-\d(p-q)}\frac{2^{-dj_q}}{\ell_{j_q}^s}\,.
$$
This inequality together with \eqref{f46}, \eqref{f47}, and the obvious relation
$\sum_{i=1}^{2^{d(j_p-1)}}\mu(Q_{p,i})=1$, imply \eqref{f45}.
\end{proof}

\begin{proof}[Proof of Theorem \ref{th22}] Set
\begin{equation}\label{f48}
\T_p=\frac{2^{-dj_p}}{\ell_{j_p}^s}\,.
\end{equation}
We start from the lower bound in \eqref{f24}. Obviously,
\begin{align*}
\|R_{\mu}^s\|_{L^2(\mu)}&=\int_{E_n}\bigg[\sum_{p=1}^{m}\xi_p(x)\bigg]^2\,d\mu(x)\\
&=\sum_{p=1}^{m}\int_{E_n}\xi_p(x)^2\,d\mu(x)+\sum_{p\ne q}\int_{E_n}\xi_p(x)\xi_q(x)\,d\mu(x)=:\Sigma_1+\Sigma_2.
\end{align*}
From \eqref{f41} we have
\begin{equation}\label{f49}
\Sigma_1\ge\frac{c}{T^{2s}}\sum_{p=1}^{m}\T_p^2.
\end{equation}
Enumerate $\T_{p}$ in decreasing order: $\T_{p_1}\ge\T_{p_2}\ge\dots\ge\T_{p_m}$. From \eqref{f45} we derive the estimate
\begin{equation}\label{f410}
\begin{split}
|\Sigma_2|&\le2CT\a^{\d}\bigg[\sum_{i=2}^m2^{-\d|p_1-p_i|}\T_{p_1}^2
+\sum_{i=3}^{m}2^{-\d|p_2-p_i|}\T_{p_2}^2+\dots+2^{-\d|p_{m-1}-p_m|}\T_{p_{m-1}}^2\bigg]\\
&<4CT\a^{\d}\bigg[\sum_{i=1}^{\infty}2^{-\d i}\bigg]\sum_{p=1}^m\T_{p}^2\,.
\end{split}
\end{equation}
We can choose $\a$ and $T$ in such a way that the constant $cT^{-2s}$ in \eqref{f49} is at least twice as big as
the constant before the last sum in \eqref{f410}. We have
$$
\|R_{\mu}^s\|_{L^2(\mu)}>c\sum_{p=1}^m\T_{p}^2\,,\quad c=c(d,s).
$$
To get the lower bound in \eqref{f24}, it remains to note that
\begin{equation}\label{f411}
\begin{split}
\sum_{j=j_p}^{j_{p+1}-1}\bigg(\frac{2^{-dj}}{\ell_{j}^{s}}\bigg)^2&=
\sum_{j=j_p}^{j_{p+1}-1}\bigg(\frac{2^{-dj}}{2^{-s(j-j_p)}\ell_{j_p}^{s}}\bigg)^2\\
&=\bigg(\frac{2^{-dj_p}}{\ell_{j_p}^{s}}\bigg)^2\cdot\sum_{j=j_p}^{j_{p+1}-1}2^{-2(d-s)(j-j_p)}
<C(d,s)\T_{p}^2\,,\quad p=1,\dots,m-1.
\end{split}
\end{equation}

Obviously, the estimate from below obtained in \eqref{f24} implies the lower bound in \eqref{f25}. To complete the proof of Theorem \ref{th22}, it is enough to get the upper bound in \eqref{f25}. But the proof of this estimate is literally the same as the proof of the corresponding estimate for corner Cantor sets in \cite[Corollary~3.5]{ENV}.
\end{proof}

\section{Estimates for the size of the set where $|R_\nu^s|$ is large.\\
Another approach}

In this section we develop an independent approach to obtaining the estimate \eqref{f26}, as well as
its generalizations and related results.

Let a finite sequence $\s_1,\dots,\s_n$, $2\s_{j+1}\le\s_j$, $j=1,\dots,n-1$, and constants $\a\in(0,\frac12)$,
$T\in(1,\frac1{2\a})$, $c_0>0$, be given. For the corresponding Cantor set $E_n$ and for a positive measure $\nu$ supported on $E_n$, set
$$
\tilde R_\nu^s(x)=\int_{E_n\setminus E_n(x)}\frac{y-x}{|y-x|^{s+1}}\,d\nu(x),
$$
where $E_n(x)$ is the cube $E_{n,k}$ containing $x$. Define the sets $\EE$, $\tilde\EE$ of cubes by the relations
\begin{align}
\EE&=\bigg\{E_{n,k}:|R_\nu^s(x_{n,k})|>\frac{c_0}{T^s}\biggl[\sum_{j=1}^n\t_j^2\biggr]^{1/2}\bigg\},\quad \t_j=\frac{2^{-dj}}{\ell_j^s},\label{f51}\\
\tilde\EE&=\bigg\{E_{n,k}:|\tilde R_\nu^s(x_{n,k})|>\frac{c_0}{T^s}\biggl[\sum_{j=1}^n\t_j^2\biggr]^{1/2}\bigg\}\label{f52}.
\end{align}
Here as before, $x_{n,k}$ is the center of $E_{n,k}$; in \eqref{f51} we assume that the values
$R_\nu^s(x_{n,k})$ exist.

\begin{theorem}\label{th51} For every $s\in(0,\infty)$ and every integer $d\ge1$, there exist constants $c_0$,
$T_0$, depending only on $d,s$, with the following properties.
Fix some $T>T_0$, $\a<\a_0(d,s,T)$, and a sequence $\s_1,\dots,\s_n$, $2\s_{j+1}\le\s_j$, $j=1,\dots,n-1$. Let $\nu$ be a measure supported on the corresponding Cantor set $E_n$, equally (but not necessarily uniformly) distributed on
each cube $E_{n,k}$, $k=1,\dots,2^{dn}$, with $\nu(E_{n,k})=2^{-dn}$. Then the number of cubes $E_{n,k}$ in
$\tilde\EE$ is comparable with the number of all cubes in $E_n$, that is,
\begin{equation}\label{f53}
\#\tilde\EE>c_12^{dn},
\end{equation}
where $c_1$ is an absolute constant. Moreover, if values $R_\nu^s(x_{n,k})$ exist, then
\begin{equation}\label{f54}
\#\EE>c_12^{dn}.
\end{equation}
The same conclusion holds if we replace $x_{n,k}$ in \eqref{f51}, \eqref{f52} by any fixed points
$x'_{n,k}\in E_{n,k}$, such that $x'_{n,k}-x_{n,k}=x'_{n,l}-x_{n,l}$, $k,l=1,\dots,2^{dn}$.
\end{theorem}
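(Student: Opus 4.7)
The plan is to expand $\tilde R_\nu^s(x_{n,k}) = \sum_{p=1}^{m} \zeta_p(k)$ via a telescoping annular decomposition at the centers $x_{n,k}$, analogous to the one used in Section~4, and then deduce \eqref{f53} by a direct Paley--Zygmund argument applied to the discrete ``counting measure'' on $\{x_{n,k}\}_{k=1}^{2^{dn}}$. This route bypasses the NTV theorem used in the proof of Theorem~\ref{th21}. The measure $\nu$ is only equally, not uniformly, distributed on each $E_{n,k}$, but this only affects the analysis through the variation of $K^s$ within each smallest cube, which is handled as an error controlled by the Lipschitz estimate on $K^s(y-x)$ away from the diagonal.

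First I would define $\zeta_p(k) := \int_{Q_p(x_{n,k}) \setminus Q_{p+1}(x_{n,k})} K^s(y - x_{n,k})\,d\nu(y)$ for $p < m$ and $\zeta_m(k) := \int_{Q_m(x_{n,k}) \setminus E_n(x_{n,k})} K^s(y - x_{n,k})\,d\nu(y)$, and verify the discrete analogues of Lemmas~\ref{le41} and~\ref{le42}. The splitting $\zeta_p = I_1+I_2$ from Lemma~\ref{le41} still applies, and the lower bound
\[
2^{-dn}\sum_k |\zeta_p(k)|^2 \ge c\, T^{-2s}\, \T_p^2, \qquad \T_p := 2^{-dj_p}\ell_{j_p}^{-s},
\]
follows from the ``constructive interference on half of the points'' mechanism of Lemma~\ref{le41}: the $2^{d(n-j_p)}$ centers inside a single $E_{j_p,k}$ come in antipodal pairs about $x_{j_p,k}$, so $I_1(x_{n,k})$ flips sign within each pair while $I_2(x_{j_p,k})$ is fixed. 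The cross bound
\[
\Big|2^{-dn}\sum_k \zeta_p(k)\,\zeta_q(k)\Big| \le C T (\a/2)^{\d |p-q|}\, \T_p \T_q
\]
follows from $\sum_{k:\,E_{n,k}\subset Q_{p,i}} \zeta_p(k) = 0$ (symmetry about $w_{p,i}$) combined with the same Lipschitz estimate used in Lemma~\ref{le42}. Summing and choosing $\a$ small yields the discrete second-moment bound $2^{-dn}\sum_k |F(k)|^2 \ge c \sum_j \t_j^2$ where $F(k) := \sum_p \zeta_p(k) = \tilde R_\nu^s(x_{n,k})$.

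The main obstacle is the matching discrete fourth-moment bound $2^{-dn}\sum_k |F(k)|^4 \le C(\sum_j \t_j^2)^2$. Expanding $F^4$ produces fourfold products $\zeta_{p_1}\zeta_{p_2}\zeta_{p_3}\zeta_{p_4}$; I would group them by the largest index $p^* = \max_i p_i$ and exploit the cancellation $\sum_{k \in Q_{p^*,i}} \zeta_{p^*}(k) = 0$ to extract a geometric decay factor in $|p^* - p^{**}|$, where $p^{**}$ is the second-largest index, while bounding the remaining two factors pointwise by $|\zeta_p(k)| \le C \T_p$ from \eqref{f47}. The resulting multi-indexed geometric series sums to at most $C(\sum_p \T_p^2)^2$, which by \eqref{f411} is controlled by $C'(\sum_j \t_j^2)^2$. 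With both moment bounds established, Lemma~\ref{le31} applied to $f = |F|^2$ (with the normalised counting measure as $\nu$) yields $\#\tilde\EE > c_1 2^{dn}$, proving \eqref{f53}. Estimate \eqref{f54} follows by the same argument with $\zeta_m$ replaced by the innermost integral $\int_{E_n(x_{n,k})} K^s(y-x_{n,k})\,d\nu(y)$, which admits the same pointwise bound $C\T_m$ and the same orthogonality-type relations. The extension to shifted centers $x'_{n,k}$ is immediate since a coherent translation common to all $k$ perturbs each $\zeta_p(k)$ by a uniformly small, index-independent amount that is absorbed into $c_0$.
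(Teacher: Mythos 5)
The decisive gap is the discrete fourth-moment bound, which is asserted but not actually obtainable by the expansion you sketch. Grouping a quadruple $(\zeta_{p_1}\cdot\zeta_{p_2})(\zeta_{p_3}\cdot\zeta_{p_4})$ with $p_1>p_2\ge p_3\ge p_4$ by the cubes $Q_{p_1,i}$ and using the mean-zero property of $\zeta_{p_1}$ against the slow variation of the other three factors yields at best one factor of decay, namely
$\bigl|2^{-dn}\sum_k(\zeta_{p_1}\cdot\zeta_{p_2})(\zeta_{p_3}\cdot\zeta_{p_4})\bigr|\le C\,2^{-\d'(p_1-p_2)}\,\T_{p_1}\T_{p_2}\T_{p_3}\T_{p_4}$,
since the oscillation estimate is worst for the second-largest index and $p_3,p_4$ enter only through the pointwise bound $|\zeta_p|\le C\T_p$. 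Summing this over all quadruples does \emph{not} give $C(\sum_p\T_p^2)^2$: in the model case $\T_1=\dots=\T_m=1$ the sum over $p_3,p_4\le p_2$ contributes a factor $p_2^2$, so the total is of order $m^3$, whereas $(\sum_p\T_p^2)^2=m^2$. For genuinely independent mean-zero variables the off-diagonal quadruples vanish exactly; your $\zeta_p$ only enjoy approximate orthogonality with decay in $p_1-p_2$, and that is not enough — note $|F(k)|$ can be as large as $\sum_p\T_p\gg(\sum_p\T_p^2)^{1/2}$, so the $L^4$ bound is a true square-function estimate. In substance it is of the same depth as the upper bound in \eqref{f25} together with the NTV bootstrap used for Theorem \ref{th21}; Section 5 is expressly designed to avoid that machinery, which is also why Theorem \ref{th51} can be stated for all $s\in(0,\infty)$, while the $L^2$/$L^4$ upper bounds are only available for $s<d$.

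There are secondary problems as well. For \eqref{f54} the own-cube term $\int_{E_n(x_{n,k})}K^s(y-x_{n,k})\,d\nu(y)$ is \emph{not} bounded by $C\T_m$ for a measure that is merely equally distributed: the mass inside $E_{n,k}$ may concentrate arbitrarily close to the center, making this term arbitrarily large even when the principal value exists; moreover it is the same vector for every $k$ (the restrictions are translates of one another), so it has no mean-zero property to exploit. Also, the exact cancellations you invoke ($\sum_{k:E_{n,k}\subset Q_{p,i}}\zeta_p(k)=0$ and the sign flip of $I_1$ under antipodal reflection) require reflection symmetry of $\nu$, which equal (translation-identical) distribution does not provide; they hold only after replacing $\nu$ by the point-mass measure $\nu_0$ at the centers, with a scale-by-scale comparison in the spirit of \eqref{f520}--\eqref{f523}. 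By contrast, the paper's proof uses no upper moment bound at all: it combines the monotonicity Lemma \ref{le53} (flipping the choice at a single scale $j_p$, $p\in\PP$, raises the first component of $R_\nu^s$ by $\gtrsim\T_p/T^s$), the anti-concentration Lemma \ref{le54}, and the combinatorial reflection Lemma \ref{le55} to map the many cubes with very negative $S(\mathbf f)$ injectively onto cubes where $|R_{1\nu}^s|$ is large; only translation structure of $\nu$ is used, which is exactly what the hypothesis supplies.
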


This theorem implies a useful corollary which will be given after the proof.

The proofs of \eqref{f53}, \eqref{f54}, and of the statement for points $x'_{n,k}$, are the same. For definiteness,
we consider the case \eqref{f54}.

\begin{lemma}\label{le52} For every finite sequence $\{a_p\}_{p=1}^m$ of positive numbers and for given $\d>0$,
there is a subsequence $\{a_{p_l}\}_{l=1}^K$, such that
\begin{gather}
2^{-\d(p_l-p)}a_p\le a_{p_l}\ \text{ for all } p\le p_l,\quad l=1,\dots,K;\label{f55}\\
\sum_{l=1}^K a_{p_l}^{\k}\ge c\sum_{p=1}^{m}a_p^\k,\quad \k>0,\label{f56}
\end{gather}
where $c$ depends only on $\d$ and $\k$.
\end{lemma}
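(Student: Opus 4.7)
The plan is to reduce condition \eqref{f55} to a statement about running maxima of a reweighted sequence, and then deduce \eqref{f56} by a short geometric-series estimate. Set $b_p:=a_p 2^{\d p}$, so that \eqref{f55} is equivalent to $b_p\le b_{p_l}$ for every $p\le p_l$, i.e., to $b_{p_l}=\max_{p'\le p_l}b_{p'}$. I would therefore choose $\{p_l\}_{l=1}^K$ to be the set of indices $p\in\{1,\dots,m\}$ at which $b_p$ realizes a running maximum, namely those $p$ with $b_p\ge b_{p'}$ for every $p'<p$. By construction $p_1=1$, and \eqref{f55} holds for free.

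For \eqref{f56}, I would partition $\{1,\dots,m\}$ into the blocks $I_l:=\{p_l,p_l+1,\dots,p_{l+1}-1\}$ (with the convention $p_{K+1}:=m+1$) and bound $\sum_{p\in I_l}a_p^\k$ in terms of $a_{p_l}^\k$. Every $p\in I_l$ satisfies $b_p\le b_{p_l}$: for $p=p_l$ this is trivial, and for $p_l<p<p_{l+1}$ the reverse inequality would force $p$ to be a running maximum, contradicting $p\notin\{p_1,\dots,p_K\}$. Rewriting in terms of the $a_p$'s gives
\begin{equation*}
a_p\le 2^{-\d(p-p_l)}\,a_{p_l},\qquad p\in I_l.
\end{equation*}

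Raising this to the power $\k$ and summing over $p\in I_l$ produces a geometric series in $2^{-\d\k}$ majorized by $(1-2^{-\d\k})^{-1}a_{p_l}^\k$. Summing over $l$ then yields
\begin{equation*}
\sum_{p=1}^m a_p^\k\le\bigl(1-2^{-\d\k}\bigr)^{-1}\sum_{l=1}^K a_{p_l}^\k,
\end{equation*}
which is exactly \eqref{f56} with $c=1-2^{-\d\k}$, a constant depending only on $\d$ and $\k$.

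I do not anticipate any serious obstacle here; the argument is a clean greedy construction. The only point requiring care is the tie convention in the definition of running maxima: using the non-strict inequality $b_p\ge b_{p'}$ guarantees $p_1=1$ and makes the blocks $I_l$ partition $\{1,\dots,m\}$ cleanly, keeping the geometric-series bookkeeping transparent. Any other reasonable convention works equally well, with at worst a change in the absolute constant.
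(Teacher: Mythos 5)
Your proof is correct and is essentially the paper's own argument in a forward formulation: the paper builds the subsequence by recursively taking, from the right, the least maximizers of the same weighted sequence $2^{\delta p}a_p$ (its maximization of $2^{-\delta(p_{l+1}-p)}a_p$ is exactly a maximization of $b_p=2^{\delta p}a_p$), which is the mirror image of your running-maximum selection, and both proofs conclude with the identical block decomposition and geometric-series estimate yielding $c=1-2^{-\delta\kappa}$. One micro-quibble: when justifying $b_p\le b_{p_l}$ inside a block, apply your contradiction to the \emph{first} index of the block exceeding $b_{p_l}$ (an arbitrary violator need not itself be a running maximum, but the first one is), which is an immediate fix.
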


\begin{proof} Let the maximal index $p_K$ be the least integer for which
$$
\max_{1\le p\le m}2^{-\d(m-p)}a_p=2^{-\d(m-p_K)}a_{p_K}
$$
(the value of $K$ will be determined by the construction in the sequel).

Suppose that indices $p_K, p_{K-1},\dots,p_{l+1}$ are already defined, and $p_K>p_{K-1}>\dots>p_{l+1}>1$. Then
$p_l$ is the least integer satisfying the relation
$$
\max_{1\le p< p_{l+1}}2^{-\d(p_{l+1}-p)}a_p=2^{-\d(p_{l+1}-p_l)}a_{p_l}.
$$
This equality implies that
\begin{equation}\label{f57}
2^{-\d(p_{l}-p)}a_p\le a_{p_l},\quad 1\le p<p_{l+1}.
\end{equation}
In particular, we get \eqref{f55}. Clearly, $p_1=1$. Using \eqref{f57} for $p\in[p_l,p_{l+1})$ with $p_{K+1}:=m+1$,
we get the following estimate:
$$
\sum_{p=1}^{m}a_p^\k=\sum_{l=1}^K\sum_{p=p_l}^{p_{l+1}-1}a_p^\k\le
\sum_{l=1}^K\sum_{p=p_l}^{p_{l+1}-1}2^{-\k\d(p-p_l)}a_{p_l}^{\k}
<\sum_{i=0}^\infty2^{-\k\d i}\sum_{l=1}^K a_{p_l}^{\k},
$$
which yields \eqref{f56}.
\end{proof}

We use Lemma \ref{le52} with $a_p=\T_p$, $\d=1/2$, $\k=2$, in order to extract the future subsequence from
$\{\T_p\}_{p=1}^m$ (the numbers $\T_p$ are defined by \eqref{f48}).

By Lemma \ref{le52}, there exists the set $\PP$ of indices such that
\begin{gather}
2^{-0.5(p-q)}\T_q\le \T_{p}\ \text{ for all } q\le p,\quad p\in\PP;\label{f58}\\
\sum_{p\in\PP}\T_{p}^2\ge c\sum_{q=1}^{m}\T_q^2,\label{f59}
\end{gather}
where $c$ is an absolute constant. Set $K=\#\PP$.

Each cube $E_{n,k}$, $k=1,\dots,2^{dn}$, can be represented in the form
$$
E_{n,k}=\KK_u\times\II_v,\quad u=1,\dots,2^{(d-1)n},\quad v=1,\dots,2^n,
$$
where $\KK_u$ is the projection of $E_{n,k}$ onto the hyperplane $t_1=0$, and $\II_v$ is the projection of $E_{n,k}$ onto the $t_1$-axis. For $u$ fixed, we associate the vector $\mathbf e^{(v)}=(e_1^{(v)},\dots,e_n^{(v)})$ with each
cube $E_{n,k}=\KK_u\times\II_v$ in the following way. The choice of an interval $\II_v$ can be viewed as a result
of $n$ subsequent choices (steps): starting from the interval $[-T\ell_1,T\ell_1]$, at $j$-th step, $j\ge1$, we
choose the left or the right of two equal subintervals of length $\ell_j$ in the preceding interval. We set
$e_j^{(v)}=-1$, if we choose the left subinterval at $j$-th step, and $e_j^{(v)}=1$ in the opposite case. Thus, we
get the one-to-one correspondence between the cubes $E_{n,k}=\KK_u\times\II_v$ with fixed $u$, and vectors
$\mathbf e^{(v)}$.

\begin{lemma}\label{le53} Let $T>T_0$, $\a<\a_0(d,s,T)$, and let the cubes $E_{n,k_i}=\KK_u\times\II_{v_i}$, $i=1,2$,
be such that
\begin{align*}
e_{j_p}^{(v_1)}&=-1,\quad e_{j_p}^{(v_2)}=1\quad \text{for some }\ p\in\PP,\\
e_j^{(v_1)}&=e_j^{(v_2)},\quad j\ne j_p.
\end{align*}
Suppose that a measure $\nu$ satisfies the conditions of Theorem \ref{th51}. Then
\begin{equation}\label{f510}
R_{1\nu}^s(x_{n,k_1})-R_{1\nu}^s(x_{n,k_2})>\frac{c_2}{T^s}\,\frac{2^{-dj_p}}{\ell_{j_p}^s}\,,
\end{equation}
where $R_{1\nu}^s(x)$ is the first component of a vector $R_{\nu}^s(x)$, and the constants $T_0$, $c_2$
depend only on $d,s$.
\end{lemma}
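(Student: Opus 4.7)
The plan is to use the scale decomposition $R_\nu^s(x) = \sum_{q=1}^m \xi_q(x)$ from Section~4 and to isolate the main contribution at scale $p$. First I record the geometry. Since the encodings $\mathbf e^{(v_1)}$ and $\mathbf e^{(v_2)}$ agree at every step $j\ne j_p$, and since $j_p$ is a corner-cube step inside the common parent $Q_{p,i'}$, the centers $x_{n,k_1}$ and $x_{n,k_2}$ differ only in the first coordinate, by $v := x_{n,k_2} - x_{n,k_1} = (V, 0, \dots, 0)$ with $V = (2T-1)\ell_{j_p}$. The Cantor substructure inside $E_{j_p,K_2}$ is the translate by $v$ of that inside $E_{j_p,K_1}$, and by the equal-distribution hypothesis the restrictions $\nu|_{E_{j_p,K_i}}$ are translates as well.

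This translation invariance produces exact cancellations. For every $q > p$ the integration domain $Q_q(x_{n,k_i})\setminus Q_{q+1}(x_{n,k_i})$ sits inside $E_{j_p,K_i}$, so the change of variable $y \mapsto y+v$ identifies the integrands and $\xi_q(x_{n,k_1}) = \xi_q(x_{n,k_2})$. Writing $\xi_p = I_1 + I_2$ in the notation of Lemma~\ref{le41}, the same argument gives $I_1(x_{n,k_1}) = I_1(x_{n,k_2})$, because $I_1$ integrates only over $E_{j_p}(x)\setminus Q_{p+1}(x) \subset E_{j_p,K(x)}$. Only $I_2(x_{n,k_1}) - I_2(x_{n,k_2})$ and the coarser terms with $q<p$ survive.

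For the main term, I follow Lemma~\ref{le41}: Taylor in $x$ lets me replace $I_2(x_{n,k_i})$ by $I_2(x_{j_p,K_i})$ at cost $O(\T_p/T^{s+1})$, since $|x_{n,k_i}-x_{j_p,K_i}| \le \sqrt d\,\ell_{j_p}$ while $|y-x_{j_p,K_i}| \gtrsim T\ell_{j_p}$ on the support of $I_2$. Then for each $K\ne K_i$ I approximate $\int_{E_{j_p,K}} K^s(y - x_{j_p,K_i})\,d\nu(y)$ by $2^{-dj_p}\,K^s(c_K - x_{j_p,K_i})$, where $c_K = x_{j_p,K} + c_0$ is the common centroid of $\nu|_{E_{j_p,K}}$ and $c_0$ is the shape offset shared by all $E_{n,k}$; the first-order Taylor term vanishes by the centroid property, and the second-order error is $O(\T_p/T^{s+2})$. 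A direct lattice-symmetry computation on the $2^d$ corners of $Q_{p,i'}$ then gives
\[
I_{2,1}(x_{j_p,K_1}) = C(d,s)\,V^{-s}\cdot 2^{-dj_p} + O(\T_p/T^{s+1}),
\]
with $C(d,s) = \sum_{\sigma'\in\{0,1\}^{d-1}}(1+|\sigma'|^2)^{-(s+1)/2} > 0$ (the corners with the same $t_1$-choice as $K_1$ drop out of the first-component sum), and by the reflection $K_1\leftrightarrow K_2$ the opposite sign at $x_{j_p,K_2}$. Hence $I_{2,1}(x_{n,k_1}) - I_{2,1}(x_{n,k_2}) \ge c\,\T_p/T^s$ for $T$ large enough.

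Finally, for $q<p$ I adapt Lemma~\ref{le42} to the present measure $\nu$ to obtain $|\xi_q(x_{n,k_1}) - \xi_q(x_{n,k_2})| \le CT\a^{\d(p-q)}2^{-\d(p-q)}\T_q$ with $\d = \min(\tfrac12(d-s),\tfrac12)$. Combined with the selection property $\T_q \le 2^{(p-q)/2}\T_p$ from Lemma~\ref{le52} (used with exponent $1/2$), the $q$-sum is geometric of ratio $\a^{\d}2^{1/2-\d}$, which is strictly less than $1$ for $\a$ small, and the total is bounded by $C_3 T\a^{\d}\T_p$. I choose $T_0 = T_0(d,s)$ first, large enough to absorb the $O(\T_p/T^{s+1})$ Taylor errors into the main term, and then $\a_0(d,s,T)$ small enough that $C_3 T\a^{\d} \le c_2/(2T^s)$; this yields \eqref{f510}. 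The main obstacle is pinning down the sign and magnitude of the scale-$p$ leading term: one must verify that the $K_1 \leftrightarrow K_2$ swap annihilates every contribution from corners sharing $K_1$'s first-coordinate choice, leaving the explicit positive lattice constant $C(d,s)$ that produces the required $\T_p/T^s$ bound.
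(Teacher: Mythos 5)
Your argument is, in substance, the paper's own: you cancel every scale finer than $p$ using the translation structure of $\nu$ inside the two cubes $E_{j_p}(x_{n,k_i})$, you extract from the neighbouring corner cubes of the common cell $Q_{p,i'}$ a main term of size $\approx\T_p/T^s$ with a definite sign in the flipped coordinate (this is exactly the paper's inequality \eqref{f511}, which you actually justify in more detail through the centroid/lattice computation, the paper merely asserting it for $T>T_0$), and you control the coarser scales $q<p$ by combining a Lemma~\ref{le42}-type difference bound with the selection property \eqref{f58} of $\PP$ and \eqref{f14}, fixing $T$ first and then $\a=\a(d,s,T)$ --- the same order of quantifiers as in the statement. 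For $s\in(0,d)$ this is correct, and the adaptation of the Lemma~\ref{le42} estimate from $\mu$ to the equally distributed $\nu$ is legitimate because only the masses $\nu(E_{j,l})=2^{-dj}$ and the separations enter those bounds.

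The genuine gap is the range of $s$. Lemma~\ref{le53} is invoked under the hypotheses of Theorem~\ref{th51}, i.e. for \emph{every} $s\in(0,\infty)$, whereas your coarse-scale estimate imports Lemma~\ref{le42} with $\d=\min\bigl(\tfrac12(d-s),\tfrac12\bigr)$. For $s\ge d$ this $\d$ is nonpositive: the integral $\int_0^{\ell_{j_q}}t^{d-1-s-\d}\,dt$ hidden in that lemma diverges, and your geometric series with ratio $\a^{\d}2^{1/2-\d}$ does not converge (indeed $\a^{\d}\to\infty$ as $\a\to0$ when $\d<0$). This is not cosmetic: for $s\ge d$ the bound on $\sum_{q<p}\bigl(\xi_q(x_{n,k_1})-\xi_q(x_{n,k_2})\bigr)$ must come from the separation built into the regularized construction rather than from local integrability of the kernel. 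The paper handles this by splitting each coarse annulus into $E_{j_{q-1}}(a)\setminus Q_q(a)$ and $Q_{q-1}(a)\setminus E_{j_{q-1}}(a)$ and running a case analysis in $s$: for $s>d-1$ (which covers $s\ge d$) it uses \eqref{f14} to gain the factor $\a^{(s+1)(p-q+1)}$ and the convergent sum $\sum_{q}2^{(s-d+1)(j_q-j_p)}$, with the auxiliary inequality \eqref{f512} (itself a consequence of \eqref{f58} and \eqref{f14}) serving the remaining cases. To prove the lemma in its stated generality you need to replace the blanket appeal to Lemma~\ref{le42} by such a separation-based estimate for $s\ge d$ (or restrict your claim explicitly to $s<d$, which would no longer suffice for Theorem~\ref{th51} and Corollary~\ref{co56}).
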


\begin{proof} In order to simplify notation, we set $x_{n,k_1}=a$, $x_{n,k_2}=b$ (see the figure in Section~1, where a possible location of $a$, $b$ is indicated). As before, we denote by $E(x)$,
$Q(x)$ the corresponding cubes containing $x$. Obviously,
$$
\int_{E_{j_p}(a)}\frac{y-a}{|y-a|^{s+1}}\,d\nu(y)=\int_{E_{j_p}(b)}\frac{y-b}{|y-b|^{s+1}}\,d\nu(y)\,,\quad
b-a=(2T-1)\ell_{j_p}.
$$
Hence,
\begin{align*}
&R_{\nu}^s(a)-R_{\nu}^s(b)=\int_{E_n}\bigg(\frac{y-a}{|y-a|^{s+1}}-\frac{y-b}{|y-b|^{s+1}}\bigg)\,d\nu(y)\\
&=\int_{Q_p(a)\setminus E_{j_p}(a)}\frac{y-a}{|y-a|^{s+1}}\,d\nu(y)-
\int_{Q_p(a)\setminus E_{j_p}(b)}\frac{y-b}{|y-b|^{s+1}}\,d\nu(y)\\
&+\sum_{q=2}^p \int_{E_{j_{q-1}}(a)\setminus Q_q(a)}\bigg(\frac{y-a}{|y-a|^{s+1}}-\frac{y-b}{|y-b|^{s+1}}\bigg)\,d\nu(y)\\
&+\sum_{q=2}^p \int_{Q_{q-1}(a)\setminus E_{j_{q-1}}(a)}\bigg(\frac{y-a}{|y-a|^{s+1}}-\frac{y-b}{|y-b|^{s+1}}\bigg)\,d\nu(y)
=: I_1-I_2+\Sigma_1+\Sigma_2
\end{align*}
(for $p=1$ the last sums are absent). If $T$ is greater than certain $T_0=T_0(d,s)$ then
\begin{equation}\label{f511}
\{\text{the first component of the vector }( I_1-I_2)\}\ge\frac{2c_2}{T^s}\,\frac{2^{-dj_p}}{\ell_{j_p}^s}=\frac{2c_2}{T^s}\T_p\,,
\quad c_2=c_2(d,s).
\end{equation}

We claim that
\begin{equation}\label{f512}
\ell_{j_p}\sum_{q=2}^p\frac{2^{-dj_{q-1}}}{\ell_{j_{q-1}}^{s+1}}(j_q-j_{q-1})<C\a\T_p\,,\quad p\in\PP,
\quad 0<\a<\frac12\,,
\end{equation}
where $C$ is an absolute constant. Indeed, by \eqref{f58} the the left hand side of \eqref{f512} does not exceed
$$
\ell_{j_p}\T_p\sum_{q=2}^p\frac{2^{0.5(p-q)}}{\ell_{j_{q-1}}}(j_q-j_{q-1}).
$$
The inequality \eqref{f14} yields the estimate
$$
\ell_{j_{q-1}}\ge\a^{-(p-q+1)}2^{j_p-j_{q-1}}\ell_{j_p}\,.
$$
Continuing the estimation, we get \eqref{f512}:
$$
\T_p\sum_{q=2}^p2^{0.5(p-q)}\a^{p-q+1}2^{-(j_p-j_{q-1})}(j_q-j_{q-1})<C\a\T_p\,.
$$

Since $|y-a|<2|y-b|$ in the integrals from $\Sigma_1$ and $\Sigma_2$, we have
\begin{equation}\label{f513}
\bigg|\frac{y-a}{|y-a|^{s+1}}-\frac{y-b}{|y-b|^{s+1}}\bigg|\le C(s)\frac{|b-a|}{|y-a|^{s+1}}
<2C(s)\frac{T\ell_{j_p}}{|y-a|^{s+1}}\,.
\end{equation}
Thus,
$$
|\Sigma_1|<CT\ell_{j_p}\sum_{q=2}^p \int_{E_{j_{q-1}}(a)\setminus Q_q(a)}\frac{1}{|y-a|^{s+1}}\,d\nu(y),\quad C=C(s).
$$
As in the proof of Lemma \ref{le41}, we majorize the last integrals by the integrals over the measures uniformly
distributed on $E_{j_{q-1}}(a)$ with densities $2^{-dj_{q-1}}/\ell_{j_{q-1}}^d$. We get
\begin{equation}\label{f514}
|\Sigma_1|<C(d,s)T\ell_{j_p}\sum_{q=2}^p \frac{2^{-dj_{q-1}}}{\ell_{j_{q-1}}^d}
\int_{2^{-(j_q-j_{q-1}-1)}\ell_{j_{q-1}}}^{\ell_{j_{q-1}}}\frac{t^{d-1}}{t^{s+1}}\,dt.
\end{equation}

We consider three cases.

{\bf Case 1.} $s<d-1$. Then
$$
|\Sigma_1|<C(d,s)T\ell_{j_p}\sum_{q=2}^p \frac{2^{-dj_{q-1}}}{\ell_{j_{q-1}}^d}\,\ell_{j_{q-1}}^{d-s-1}
\stackrel{\eqref{f512}}{<}C'(d,s)T\a\T_p<\frac{c_2}{2T^s}\T_p
$$
for $\a<c_2[2C'(d,s)T^{s+1}]^{-1}$, where $c_2$ is the constant in \eqref{f511}.

{\bf Case 2.} $s=d-1$. By \eqref{f514} and \eqref{f512} we have
$$
|\Sigma_1|<CT\ell_{j_p}\sum_{q=2}^p \frac{2^{-dj_{q-1}}}{\ell_{j_{q-1}}^{s+1}}\,
(j_q-j_{q-1}-1)\stackrel{\eqref{f512}}{<}C''T\a\T_p<\frac{c_2}{2T^s}\T_p,\quad C''=C''(d,s),
$$
if $\a<c_2[2C''T^{s+1}]^{-1}$.

{\bf Case 3.} $s>d-1$. Again by \eqref{f514},
\begin{align*}
|\Sigma_1|&<CT\ell_{j_p}\sum_{q=2}^p \frac{2^{-dj_{q-1}}}{\ell_{j_{q-1}}^d}
2^{(s-d+1)(j_q-j_{q-1}-1)}\frac1{\ell_{j_{q-1}}^{s-d+1}}\\
&\stackrel{\eqref{f14}}{<}CT\ell_{j_p}\sum_{q=2}^p \frac{2^{-dj_p}2^{d(j_p-j_{q-1})}\a^{(s+1)(p-q+1)}}
{\ell_{j_p}^{s+1}2^{(s+1)(j_p-j_{q-1})}}2^{(s-d+1)(j_q-j_{q-1}-1)}\\
&<C'T\a^{s+1}\T_p\sum_{q=2}^p 2^{(s-d+1)(j_q-j_p)}<\frac{c_2}{2T^s}\T_p, \quad \a^{s+1}<\frac{c_2}{C''T^{s+1}}.
\end{align*}

As concerns $\Sigma_2$, we have
$$
|\Sigma_2|<CT\ell_{j_p}\sum_{q=2}^p \frac{2^{-dj_{q-1}}}{(T\ell_{j_{q-1}})^{s+1}}
\stackrel{\eqref{f512}}{<}\frac{c_2}{2T^s}\T_p, \quad \a<c(d,s).
$$
The estimates for $\Sigma_1$, $\Sigma_2$ and \eqref{f511} yield \eqref{f510}.
\end{proof}

The next lemma is a particular case of Lemma 7.3 in \cite{ENV}.

\begin{lemma}\label{le54} Let $\l_i,\ i=1,\dots,K$, be positive numbers. Let $\zeta_i,\ i=1,\dots,K$, be independent
random variables satisfying
\begin{equation}\label{f515}
|\zeta_i|\le C\l_i,\quad \sum_{i=1}^K\var(\zeta_i)\ge c\sum_{i=1}^K\l_i^2\,.
\end{equation}
Then there exists $\beta=\beta(C,c)>0$ such that
\begin{equation}\label{f516}
\mathsf{P}\bigg\{\bigg|\sum_{i=1}^K\zeta_i\bigg|\ge\beta\bigg(\sum_{i=1}^K\l_i^2\bigg)^{1/2}\bigg\}\ge\beta.
\end{equation}
\end{lemma}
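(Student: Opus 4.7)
The plan is to apply the Paley--Zygmund second-moment inequality to a symmetrized version of $S := \sum_{i=1}^K \zeta_i$, which avoids the difficulty that $\mathsf{E}[S]$ need not vanish. Introduce an independent copy $\{\zeta_i'\}$ of $\{\zeta_i\}$, put $\eta_i := \zeta_i - \zeta_i'$ and $T := S - S' = \sum_i \eta_i$. The variables $\eta_i$ are independent and symmetric (in particular mean zero), with $|\eta_i| \le 2C\l_i$ and $\Var(\eta_i) = 2\Var(\zeta_i)$. Writing $L := \sum_i \l_i^2$, the hypotheses yield $\mathsf{E}[T] = 0$ and $\mathsf{E}[T^2] = 2\sum\Var(\zeta_i) \ge 2cL$.

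The key calculation is a fourth-moment bound for $T$. Expanding $T^4 = (\sum_i \eta_i)^4$ and using independence together with $\mathsf{E}[\eta_i] = 0$, every term except those of the form $\mathsf{E}[\eta_i^4]$ and $\mathsf{E}[\eta_i^2]\mathsf{E}[\eta_j^2]$ (for $i\ne j$) vanishes. The boundedness $|\eta_i|\le 2C\l_i$ gives $\mathsf{E}[\eta_i^4] \le 4C^2\l_i^2\,\mathsf{E}[\eta_i^2]$, so the diagonal contribution is at most $16C^4 L^2$; the pair contribution is at most $3(\mathsf{E}[T^2])^2 \le C' L^2$ by the trivial bound $\mathsf{E}[T^2]\le 4C^2 L$. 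Hence $\mathsf{E}[T^4] \le C_1 L^2$ with $C_1 = C_1(C)$. Now Paley--Zygmund applied to the nonnegative variable $T^2$ gives
$$\mathsf{P}\bigl(T^2 \ge \tfrac12 \mathsf{E}[T^2]\bigr) \ge \tfrac14 \cdot \frac{(\mathsf{E}[T^2])^2}{\mathsf{E}[T^4]} \ge \beta_1,$$
for some $\beta_1 = \beta_1(c, C) > 0$; combined with $\mathsf{E}[T^2] \ge 2cL$ this produces $\mathsf{P}(|T| \ge \sqrt{cL}) \ge \beta_1$.

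To finish, note $|T| \le |S| + |S'|$, so $\{|T| \ge \sqrt{cL}\} \subset \{|S| \ge \tfrac12\sqrt{cL}\} \cup \{|S'| \ge \tfrac12\sqrt{cL}\}$. Since $S$ and $S'$ are identically distributed, the union bound yields $\mathsf{P}\bigl(|S| \ge \tfrac12\sqrt{cL}\bigr) \ge \beta_1/2$, and we may take $\beta := \min\bigl(\tfrac12\sqrt{c},\,\beta_1/2\bigr)$. The main obstacle the proof must circumvent is precisely the possibly large mean of $S$: a direct application of Paley--Zygmund to $S^2$ fails because the ratio $\mathsf{E}[S^4]/(\mathsf{E}[S^2])^2$ need not be bounded when $|\mathsf{E}[S]|$ dominates $\sqrt{\Var S}$, and symmetrization is the cleanest way to replace $S$ by a mean-zero sum with a comparable variance scale.
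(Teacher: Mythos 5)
Your proof is correct, and it is worth noting that the paper itself does not prove this lemma at all: it is simply quoted as a particular case of Lemma 7.3 in \cite{ENV}. So what you supply is a self-contained elementary argument where the paper relies on an external reference. The argument checks out at every step: the symmetrized variables $\eta_i=\zeta_i-\zeta_i'$ are independent, mean zero, with $|\eta_i|\le 2C\lambda_i$ and $\mathsf{E}[T^2]=2\sum_i\Var(\zeta_i)\ge 2cL$; the fourth-moment expansion for independent centered summands leaves only the diagonal and pair terms, giving $\mathsf{E}[T^4]\le C_1(C)\,L^2$; Paley--Zygmund applied to $T^2$ then yields $\mathsf{P}\bigl(|T|\ge\sqrt{cL}\bigr)\ge\beta_1(c,C)$; and the de-symmetrization via $|T|\le|S|+|S'|$, the union bound, and the choice $\beta=\min\bigl(\tfrac12\sqrt c,\beta_1/2\bigr)$ produce exactly \eqref{f516} with $\beta$ depending only on $c$ and $C$. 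The only inaccuracy is in your closing motivational remark: under the stated hypotheses a direct Paley--Zygmund application to $S^2$ does not in fact fail, because $\Var S\ge cL$ together with $|\zeta_i-\mathsf{E}\zeta_i|\le 2C\lambda_i$ gives $\mathsf{E}[S^4]\le K(c,C)\bigl((\mathsf{E}S)^2+\Var S\bigr)^2=K(c,C)\,(\mathsf{E}[S^2])^2$ (the cross term $4\,\mathsf{E}S\cdot\mathsf{E}[(S-\mathsf{E}S)^3]$ is controlled by $C|\mathsf{E}S|\sqrt L\,\Var S$), so the unsymmetrized route also works; this does not affect the validity of your proof --- symmetrization just makes the moment bookkeeping cleaner.
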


We need the following Lemma 10.2 in \cite{E1}.

\begin{lemma}\label{le55} Let positive numbers $\l_i,\ i=1,\dots,K$, be given. For each vector $\mathbf f^{(r)}
=(f^{(r)}_1,\dots,f^{(r)}_K)$, $r=1,\dots,2^K$, with components equal to plus or minus ones we set
$$
S(\mathbf f^{(r)})=\sum_{i=1}^K f^{(r)}_i\l_i.
$$
Let $\AA$ be a set of different vectors $\mathbf f^{(r)}$ such that $\AA\subseteq\{\mathbf f^{(r)}:
S(\mathbf f^{(r)})\le0\}$. Then there exists a set $\BB\subseteq\{\mathbf f^{(r)}:
S(\mathbf f^{(r)})\ge0\}$ with the following properties:

1) all the vectors in $\BB$ are distinct;

2) $\#\AA\le\#\BB$;

3) each vector $\mathbf f'\in\BB$ can be obtained from some vector $\mathbf f\in\AA$ by replacing
some negative components of $\mathbf f$ (depending on $\mathbf f'$)
by positive components while keeping all the positive components of $\mathbf f$. (Different vectors $\mathbf f'$
can be obtained from the same $\mathbf f$.)
\end{lemma}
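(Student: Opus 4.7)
The plan is to reduce the existence of $\BB$ to a counting inequality about up-sets of the Boolean lattice and then invoke the Harris--Kleitman correlation inequality.

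First, I would observe that conditions 1), 2), 3) on $\BB$ are precisely equivalent to requiring that $\BB$ be a subset of $U(\AA)\cap\{\mathbf{f}^{(r)}:S(\mathbf{f}^{(r)})\ge0\}$ with $\#\BB\ge\#\AA$, where
$$
U(\AA):=\bigcup_{\mathbf{f}\in\AA}\{\mathbf{g}\in\{-1,+1\}^K:g_i\ge f_i\ \text{for all }i\}
$$
denotes the componentwise up-closure of $\AA$. Indeed, condition 3) says exactly that each $\mathbf{f}'\in\BB$ satisfies $\mathbf{f}'\ge\mathbf{f}$ for some $\mathbf{f}\in\AA$, i.e.\ $\BB\subseteq U(\AA)$; condition 1) is automatic for a set. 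Hence it suffices to establish the cardinality bound $\#(U(\AA)\cap\{S\ge0\})\ge\#\AA$.

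Second, since every $\mathbf{f}\in\AA$ belongs to $U(\mathbf{f})$ and satisfies $S(\mathbf{f})\le0$ by hypothesis, one has $\AA\subseteq U(\AA)\cap\{S\le0\}$, whence $\#\AA\le\#(U(\AA)\cap\{S\le0\})$. Writing $V:=U(\AA)$, which is by construction an up-set in the Boolean lattice $\{-1,+1\}^K$, the lemma is reduced to the following key estimate: for every up-set $V\subseteq\{-1,+1\}^K$,
$$
\#(V\cap\{S\ge0\})\ge\#(V\cap\{S\le0\}).
$$

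Third, I would deduce this key estimate from the Harris--Kleitman correlation inequality applied to the uniform measure on the discrete cube. Since every $\lambda_i>0$, the functional $S$ is strictly monotone in the componentwise order, so $\{S\ge0\}$ is an up-set and $\{S\le0\}$ is a down-set. Kleitman's inequality applied, on one hand, to the pair of up-sets $V$ and $\{S\ge0\}$ and, on the other, to the up-set $V$ paired with the down-set $\{S\le0\}$ yields
$$
\#(V\cap\{S\ge0\})\cdot 2^K\ge\#V\cdot\#\{S\ge0\},\qquad \#(V\cap\{S\le0\})\cdot 2^K\le\#V\cdot\#\{S\le0\}.
$$
The involution $\mathbf{f}\mapsto-\mathbf{f}$ negates $S$ and is a bijection of $\{-1,+1\}^K$ that exchanges $\{S>0\}$ with $\{S<0\}$ and preserves $\{S=0\}$ setwise, so $\#\{S\ge0\}=\#\{S\le0\}$. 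Dividing the two displayed inequalities and using this equality immediately gives $\#(V\cap\{S\ge0\})\ge\#(V\cap\{S\le0\})$, as required.

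I do not foresee a serious obstacle: once the monotonicity of $S$ and the symmetry $S\leftrightarrow-S$ are noted, the FKG/Kleitman machinery does all the work. Any temptation to produce $\BB$ via an explicit upward-flipping injection from $\AA$ should be resisted, since natural greedy rules (for instance, flipping the negative components of largest $\lambda_i$ until $S$ becomes nonnegative) already collide at $K=3$ with $\lambda_i\equiv 1$; the nonconstructive route via correlation inequalities is both cleaner and sufficient, as the lemma asks only for the \emph{existence} of a set $\BB$ of the required size.
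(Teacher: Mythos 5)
The paper offers no proof of this lemma at all --- it is imported verbatim as Lemma 10.2 of \cite{E1} --- so there is nothing in the text to compare your argument with; judged on its own, your proof is correct. The reduction is right: condition 3) says precisely that every $\mathbf f'\in\BB$ dominates some $\mathbf f\in\AA$ in the componentwise order, so one may take $\BB:=U(\AA)\cap\{S\ge0\}$ outright, and since $\AA\subseteq U(\AA)\cap\{S\le0\}$ the lemma reduces to the inequality $\#(V\cap\{S\ge0\})\ge\#(V\cap\{S\le0\})$ for the up-set $V=U(\AA)$. The Harris--Kleitman step is also sound: because every $\lambda_i>0$, $\{S\ge0\}$ is an up-set and $\{S\le0\}$ a down-set, the two correlation inequalities give $2^K\#(V\cap\{S\ge0\})\ge\#V\cdot\#\{S\ge0\}$ and $2^K\#(V\cap\{S\le0\})\le\#V\cdot\#\{S\le0\}$, and the involution $\mathbf f\mapsto-\mathbf f$ gives $\#\{S\ge0\}=\#\{S\le0\}$; chaining these three facts (rather than ``dividing'', which presupposes $\#V>0$, though the case $\AA=\varnothing$ is trivial anyway) yields the key estimate. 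One interpretive point is worth a sentence in a written version: you silently read ``replacing some negative components'' as allowing the empty replacement, i.e.\ $\mathbf f'\ge\mathbf f$ with equality permitted. That reading is in fact forced --- for $K=2$, $\lambda_1=\lambda_2=1$, $\AA=\{(+1,-1),(-1,+1)\}$ the conclusion fails if at least one flip were mandatory --- and it is harmless in the paper's application, where the vectors of $\AA$ lie in $\QQ_3$ and hence satisfy $S\le-\beta\sigma<0$, so every $\mathbf f'\in\BB$ genuinely differs from its ancestor. Your closing remark is also apt: the lemma asks only for the existence of $\BB$, not for an explicit injection, and the nonconstructive correlation-inequality route (in place of the combinatorial argument of \cite{E1}) settles it cleanly and makes this ingredient of the paper self-contained.
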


\begin{proof}[Proof of Theorem \ref{th51}] Set $\l_i=\T_{p_i}$, $p_i\in\PP$, $p_1<p_2<\dots<p_K$,
$\s^2=\sum_{i=1}^K \l_i^2$. Fix an integer $u$ and the components $e_j^{(v)},\ j\notin\PP$ of vectors
$\mathbf e^{(v)}=(e_1^{(v)},\dots,e_n^{(v)})$. Let $\FF$ be the set of cubes $E_{n,k}=\KK_u\times\II_v$ with fixed parameters indicated above. Clearly, $\FF$ consists of $2^K$ elements depending only on the choice of the components $e_j^{(v)}$ with $j\in\PP$. We can consider this sampling as a choice of a vector $\mathbf f^{(r)}$. We introduce the following subsets of $\FF$ (or equivalently, the subsets of vectors $\mathbf f^{(r)}$):
\begin{align*}
\QQ_1&=\{E_{n,k}\in\FF: |R_{1\nu}^s(x_{n,k})|>\frac{c_2}{4T^s}\beta\s\},\\
\QQ_2&=\{E_{n,k}\in\FF: |R_{1\nu}^s(x_{n,k})|\le\frac{c_2}{4T^s}\beta\s\},\\
\QQ_3&=\{E_{n,k}\in\FF: S(\mathbf f^{(r)})\le-\beta\s\},\\
\QQ_4&=\{E_{n,k}\in\FF: S(\mathbf f^{(r)})>-\beta\s\},
\end{align*}
where $\beta$ is the constant in Lemma \ref{le54} with $c=C=1$, and $c_2$ is the constant in \eqref{f510}.

We shall consider the choice of a vector $\mathbf f^{(r)}$, $r=1,\dots,2^K$, as a random event with the same probability $2^{-K}$ for all vectors. This defines a random variable $\Xi$ with values $S(\mathbf f^{(r)})$.
Obviously, we can interpret $\Xi$ as the sum of independent random variables $\zeta_i$, $i=1,\dots,K$, taking the
values $\l_i$ and $-\l_i$ with probability 1/2.

We now use Lemma \ref{le54}. In our case, $\var(\zeta_i)=\l_i^2$, $c=C=1$. Since distribution of $\Xi$ is
symmetric, \eqref{f516} yields the inequality
$$
\mathsf{P}\{\Xi\le-\beta\s\}\ge\frac{\beta}2\,.
$$
Hence,
\begin{equation}\label{f517}
\#\QQ_3\ge\beta2^{K-1}.
\end{equation}
Consider now the set $\AA:=\QQ_2\cap\QQ_3$. By $\AA$ we denote also the set of the corresponding vectors
$\mathbf f^{(r)}$. If $\AA=\varnothing$, then $\#\AA=0$, and we immediately arrive at the estimates
\eqref{f518} below.
Assume that $\AA\ne\varnothing$, and let $\BB$ be the set of vectors corresponding to the set $\AA$ by Lemma \ref{le55}. Consider an arbitrary vector $\mathbf f'\in\BB$. It can be obtained from some $\mathbf f\in\AA$
by the replacement of some negative components of $\mathbf f$ by positive ones. Let $\JJ$ be the set of indices of
the replaced components of $\mathbf f$. Let $\mathbf f=\mathbf f^{(r)}$, $\mathbf f'=\mathbf f^{(t)}$, and let
$E_{n,k}$, $E_{n,m}$ be the cubes associated with $\mathbf f^{(r)}$ and $\mathbf f^{(t)}$ correspondingly. Since
$S(\mathbf f^{(t)})\ge0$, it follows that
$$
\beta\s\le S(\mathbf f^{(t)})-S(\mathbf f^{(r)})=2\sum_{i\in\JJ}\l_i\,.
$$
On the other hand, applying Lemma \ref{le53} $\#\JJ$ times (successively for each component in $\JJ$) we get
$$
R_{1\nu}^s(x_{n,k})-R_{1\nu}^s(x_{n,m})>\frac{c_2}{T^s}\sum_{i\in\JJ}\l_i\ge\frac{c_2}{2T^s}\beta\s,
$$
so that
$$
|R_{1\nu}^s(x_{n,m})|\ge|R_{1\nu}^s(x_{n,k})-R_{1\nu}^s(x_{n,m})|-|R_{1\nu}^s(x_{n,k})|>\frac{c_2}{4T^s}\beta\s.
$$
Hence $\BB\subset\QQ_1$. Moreover, $\BB\subset\QQ_4$, since $S(\mathbf f^{(r)})\ge0$ for $\mathbf f^{(r)}\in\BB$.
Bearing in mind that $\#\AA\le\#\BB$, we obtain
$$
\#(\QQ_2\cap\QQ_3)=\#\AA\le\#\BB\le\#(\QQ_1\cap\QQ_4).
$$
Thus, for every set $\FF$ (that is, for each $u$ and for each collection of components $e_j^{(v)}$, $j\notin\PP$),
we have
\begin{equation}\label{f518}
\begin{split}
\#\QQ_1&=\#(\QQ_1\cap\QQ_3)+\#(\QQ_1\cap\QQ_4)\\
&\ge\#(\QQ_1\cap\QQ_3)+\#(\QQ_2\cap\QQ_3)=\#\QQ_3\stackrel{\eqref{f517}}{\ge}\beta2^{K-1}.
\end{split}
\end{equation}
Since there exist $2^{(d-1)n}$ values of $u$, and $2^{n-K}$ collection of components $e_j^{(v)}$ with $j\notin\PP$,
it follows that
$$
\#\{E_{n,k}: |R_{1\nu}^s(x_{n,k})|>\frac{c_2}{4T^s}\beta\s\}>\frac{\beta}2 2^{dn}.
$$
It remains to note that
$$
\s^2=\sum_{p\in\PP}\T_p^2\stackrel{\eqref{f59}}{\ge}c\sum_{q=1}^m\T_q^2\stackrel{\eqref{f411}}{>}
c'\sum_{j=1}^n\t_j^2.
$$
Theorem \ref{th51} is proved.
\end{proof}

Using Theorem \ref{th51} we show that for the class of measures $\nu$ satisfying the assumptions of this theorem,
the Riesz transform $R_\nu^s(x)$ is large on a ``big'' portion of $E_n$.

\begin{corollary}\label{co56} Let an integer $d\ge1$ and $s>0$ be given. There are constants $\a\in(0,\frac12)$, $T\in(1,\frac1{2\a})$, $c_3>0$, depending only on $d$, $s$, and such that for any positive numbers $\s_1,\dots,\s_n$, $2\s_{j+1}\le\s_j$, $1\le j\le n-1$, and for the corresponding Cantor set $E_n$,
\begin{equation}\label{f519}
\HH^d\bigg\{x\in E_n:|R_{\nu}^s(x)|> c_3\bigg[\sum_{j=1}^n\t_j^2\bigg]^{1/2}\bigg\}>\frac{c_1}2\HH^d(E_n)= \frac{c_1}22^{dn}\ell_n^d\,,\quad \t_j=\frac{2^{-dj}}{\ell_j^s}\,.
\end{equation}
Here $\HH^d$ is $d$-dimensional Hausdorff measure, and $\nu$ is a measure satisfying the conditions of
Theorem~\ref{th51}, such that $R_\nu^s(x)$ exists (in the sense of principal values) $\HH^d$-a.~e. on $E_n$.
\end{corollary}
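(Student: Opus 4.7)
The plan is to deduce Corollary \ref{co56} from Theorem \ref{th51} via a Fubini/translation-invariance argument. I would choose $\a$, $T$ so that Theorem \ref{th51} applies, and take $c_3:=c_0/T^s$ with $c_0$ as in \eqref{f51}. The decisive ingredient is the final clause of Theorem \ref{th51}: the centers $x_{n,k}$ may be replaced by any points $x'_{n,k}\in E_{n,k}$ having the same relative position inside their cubes. Since every cube $E_{n,k}$ is a translate of $E_{n,1}$ by $v_k:=x_{n,k}-x_{n,1}$, any such configuration is parametrized by a single base point $y\in E_{n,1}$ via $x'_{n,k}(y):=y+v_k$.

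Set $G:=\{x\in E_n:|R_\nu^s(x)|>c_3[\sum_{j=1}^n\t_j^2]^{1/2}\}$. Because $E_n$ is the disjoint union of the translates $v_k+E_{n,1}$, the Hausdorff measure of $G$ decomposes as
$$
\HH^d(G)=\sum_{k=1}^{2^{dn}}\HH^d(G\cap E_{n,k})=\int_{E_{n,1}}\#\{k:x'_{n,k}(y)\in G\}\,d\HH^d(y).
$$
To bound the integrand below by $c_12^{dn}$ using Theorem \ref{th51}, I need $R_\nu^s$ to exist simultaneously at all $2^{dn}$ points $x'_{n,k}(y)$. This is where the hypothesis of $\HH^d$-a.e.\ existence enters: let $Z\subset E_n$ be the null exceptional set on which $R_\nu^s$ fails to exist; then each pullback $Z_k:=\{y\in E_{n,1}:x'_{n,k}(y)\in Z\}$ is $\HH^d$-null, and hence so is the finite union $Z':=\bigcup_{k=1}^{2^{dn}}Z_k\subset E_{n,1}$.

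For every $y\in E_{n,1}\setminus Z'$, the whole orbit $\{x'_{n,k}(y)\}_{k=1}^{2^{dn}}$ lies outside $Z$, so Theorem \ref{th51} applies to this configuration and yields $\#\{k:x'_{n,k}(y)\in G\}\ge c_12^{dn}$. Integrating over $E_{n,1}\setminus Z'$ and using $\HH^d(E_{n,1})=\ell_n^d$ gives $\HH^d(G)\ge c_12^{dn}\ell_n^d=c_1\HH^d(E_n)$, which is in fact stronger than \eqref{f519}. The main work has already been carried out in Theorem \ref{th51}; I do not foresee a substantive obstacle here, only the mild bookkeeping step of checking that for $\HH^d$-a.e.\ base point $y$ the associated orbit avoids $Z$, which follows from the common translation structure of the cubes $E_{n,k}$.
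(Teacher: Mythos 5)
Your proof is correct, but it takes a genuinely different route from the paper's. You obtain \eqref{f519} as a purely formal consequence of the last clause of Theorem \ref{th51}: you parametrize the translation-consistent configurations by $x'_{n,k}(y)=y+v_k$, $y\in E_{n,1}$, discard the $\HH^d$-null set of base points whose orbit meets the exceptional set where $R_\nu^s$ fails to exist, apply the counting bound to each remaining configuration, and integrate over $y$ using translation invariance of $\HH^d$ (measurability of $y\mapsto\#\{k:y+v_k\in G\}$ is immediate, and the cubes $E_{n,k}$ are indeed pairwise disjoint translates of $E_{n,1}$). This even yields the slightly stronger conclusion $\HH^d(G)\ge c_1\HH^d(E_n)$ with $c_3=c_0T^{-s}$. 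The paper argues differently: it uses only the most basic instance of the theorem, namely \eqref{f53} for $\tilde R_{\nu_0}^s$ at the true centers, applied to the auxiliary atomic measure $\nu_0$ with point masses $2^{-dn}$ at the $x_{n,k}$, and then transfers the information to $R_\nu^s$ at generic points of $E_n$ by hand: the perturbation estimates \eqref{f520} and \eqref{f524} (which force an additional smallness condition on $\a$), and a reflection-symmetry argument pairing $E_{n,k}$ with the mirror cube $E_{n,k^\ast}$ so that the self-interaction term $I_{n,k}(x)$ cannot cancel the far field on at least half of the measure of $E_{n,k}\cup E_{n,k^\ast}$; this is the source of the factors $\tfrac{c_1}{2}$ and $c_3=c_0(2T^s)^{-1}$ in \eqref{f519}. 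What your route buys is brevity and marginally better constants, and it needs no smallness of $\a$ beyond what Theorem \ref{th51} itself requires; what it costs is that it leans entirely on the final clause of Theorem \ref{th51} (shifted points $x'_{n,k}$, and $R_\nu^s$ of $\nu$ itself rather than $\tilde R^s$ of a point-mass model), whose proof the paper only asserts to be identical to the centered case, whereas the paper's proof of the corollary would survive with just \eqref{f53} for $\nu_0$. Since that clause is part of the stated theorem, your use of it is legitimate; just make the parameter choice explicit (say $T=T_0+1$ and $\a<\min\{\a_0(d,s,T),(2T)^{-1}\}$, so that $T\in(1,\tfrac1{2\a})$), and note that your threshold $c_0T^{-s}$ dominates the stated $c_3$, so \eqref{f519} as written follows a fortiori.
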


\noindent{\bf Remark.} For $s\le d$ the condition about existence of $R_\nu^s(x)$ holds for any Borel measure
on $E_n$. Generally speaking, it is not correct for $s>d$.

\begin{proof} Set $T=T_0+1$, $c_3=c_0(2T^s)^{-1}$, where $c_0$ and $T_0$ are the constants from Theorem~\ref{th51}.
Choose a sufficiently small $\a$ which will be specified later, and consider the corresponding Cantor set $E_n$.
Let $\nu_0$ be the point measure with charges equal to $2^{-dn}$ and located at the centers $x_{n,k}$ of the cubes
$E_{n,k}$. We claim that
\begin{equation}\label{f520}
|\tilde R_\nu^s(x)-\tilde R_{\nu_0}^s(x)|<\frac{c_0}{4T^s}\biggl[\sum_{j=1}^n\t_j^2\biggr]^{1/2},\quad x\in E_n,
\end{equation}
if $\a$ is small enough. Indeed, let $x',x''$ be any two points lying in the same cube $E_{n,k}$
(that is, $E_n(x')=E_n(x'')=E_{n,k}$). In the same way as \eqref{f513} we obtain the inequality
\begin{equation}\label{f521}
\bigg|\frac{y-x'}{|y-x'|^{s+1}}-\frac{y-x''}{|y-x''|^{s+1}}\bigg|\le \frac{C\ell_n}{|y-x'|^{s+1}}\,,\quad C=C(s).
\end{equation}
Let $j\in[1,n-1]$, and let $p\in[1,m-1]$ be such that $j_p\le j<j_{p+1}$. By \eqref{f14} we get
\begin{equation}\label{f522}
\begin{split}
\ell_n&\le\a^{m-p}2^{-(j_m-j_p)}\ell_{j_p}=\a^{m-p}2^{-(j_m-j_p)}2^{j-j_p}\ell_j\\
&=\a^{m-p}2^{-(j_m-j)}\ell_j\le\a2^{-(n-j)}\ell_j
\end{split}
\end{equation}
(we recall that $j_m=n$). We use \eqref{f521} with $x\in E_{n,k}$ instead of $y$, and with $x'=y\in E_{n,i}$,
$x''=x_{n,i}$, $i\ne k$. Then
\begin{equation}\label{f523}
\begin{split}
&|\tilde R_\nu^s(x)-\tilde R_{\nu_0}^s(x)|\le\sum_{i\ne k}\int_{E_{n,i}}
\bigg|\frac{y-x}{|y-x|^{s+1}}-\frac{x_{n,i}-x}{|x_{n,i}-x|^{s+1}}\bigg|\,d\nu(y)\\
&\le C\ell_n\int_{E_{n}\setminus E_n(x)}\frac{d\nu(y)}{|y-x|^{s+1}}
\le C'\ell_n\sum_{j=1}^{n-1}\int_{\ell_j<|y-x|\le\ell_{j-1}}\frac{d\nu(y)}{|y-x|^{s+1}}\\
&\le C''\ell_n\sum_{j=1}^{n-1}\frac{2^{-dj}}{\ell_j^{s+1}}
\le C''\max_{1\le j\le n-1}\bigg(\frac{2^{-dj}}{\ell_j^s}\bigg)\cdot\sum_{j=1}^{n-1}\frac{\ell_n}{\ell_j}\\
&\stackrel{\eqref{f522}}{\le} C''\a\max_{1\le j\le n-1}\t_j\cdot\sum_{j=1}^{\infty}2^{-j}
<C''\a\biggl[\sum_{j=1}^n\t_j^2\biggr]^{1/2}
\le\frac{c_0}{4T^s}\biggl[\sum_{j=1}^n\t_j^2\biggr]^{1/2},
\end{split}
\end{equation}
if $\a\le c_0[4C''T^s]^{-1},\ C''=C''(d,s)$. Clearly, $\a$ depends only on $d$ and $s$.

The inequality \eqref{f521} and the same arguments as in \eqref{f523} yield the estimate
\begin{equation}\label{f524}
|\tilde R_\nu^s(x')-\tilde R_{\nu}^s(x'')|<\frac{c_0}{4T^s}\biggl[\sum_{j=1}^n\t_j^2\biggr]^{1/2},
\quad x',x''\in E_{n,k}.
\end{equation}

Let $\tilde\EE=\tilde\EE(\nu_0)$ be the set defined by \eqref{f52} for the measure $\nu_0$. Fix some cube
$E_{n,k}\in\tilde\EE$, and set
$$
I_{n,k}(x):=\int_{E_{n,k}}\frac{y-x}{|y-x|^{s+1}}\,d\nu(x)=R_\nu^s(x)-\tilde R_{\nu}^s(x),\quad x\in E_{n,k}
$$
(we consider only the points $x$ for which $R_\nu^s(x)$ exists).
Let $E_{n,k^\ast}$ be the cube symmetric to $E_{n,k}$ with respect to the center $x_0$ of the cube $E_0$.
Choose $x\in E_{n,k}$, and let $x^\ast\in E_{n,k^\ast}$ be such that $x-x^\ast=x_{n,k}-x_{n,k^\ast}$. Clearly,
$I_{n,k}(x)=I_{n,k^\ast}(x^\ast)$. Moreover,
$$
\tilde R_{\nu_0}^s(x_{n,k})=-\tilde R_{\nu_0}^s(x_{n,k^\ast})
$$
by the symmetry of the measure $\nu_0$ with respect to $x_0$. Hence, for at least one couple of vectors
$I_{n,k}(x)$, $\tilde R_{\nu_0}^s(x_{n,k})$, or $I_{n,k^\ast}(x^\ast)$, $\tilde R_{\nu_0}^s(x_{n,k^\ast})$,
the angle between these vectors is less or equal to $\pi/2$. So, for the sets
\begin{align*}
\EE_{n,k}&:=\{x\in E_{n,k}:|I_{n,k}(x)+\tilde R_{\nu_0}^s(x_{n,k})|>|\tilde R_{\nu_0}^s(x_{n,k})|\}\\
\EE_{n,k^\ast}&:=\{x\in E_{n,k^\ast}:|I_{n,k^\ast}(x)+\tilde R_{\nu_0}^s(x_{n,k^\ast})|>
|\tilde R_{\nu_0}^s(x_{n,k^\ast})|\}
\end{align*}
we have
$$
\HH^d(\EE_{n,k})+\HH^d(\EE_{n,k^\ast})\ge \ell_n^d.
$$
For $x\in\EE_{n,k}$ we get
\begin{align*}
|R_{\nu}^s(x)|&=|I_{n,k}(x)+\tilde R_{\nu}^s(x)|\\
&=|I_{n,k}(x)+\tilde R_{\nu_0}^s(x_{n,k})+\tilde R_{\nu}^s(x_{n,k})-\tilde R_{\nu_0}^s(x_{n,k})
+\tilde R_{\nu}^s(x)-\tilde R_{\nu}^s(x_{n,k})|\\
&>|\tilde R_{\nu_0}^s(x_{n,k})|-|\tilde R_{\nu}^s(x_{n,k})-\tilde R_{\nu_0}^s(x_{n,k})|
-|\tilde R_{\nu}^s(x)-\tilde R_{\nu}^s(x_{n,k})|>\frac{c_0}{2T^s}\biggl[\sum_{j=1}^n\t_j^2\biggr]^{1/2}.
\end{align*}
Analogous estimates hold for $x^\ast\in\EE_{n,k^\ast}$. Hence,
$$
\HH^d\bigg\{x\in E_{n,k}\cup E_{n,k^\ast}:|R_{\nu}^s(x)|>\frac{c_0}{2T^s}\biggl[\sum_{j=1}^n\t_j^2\biggr]^{1/2}\bigg\}
\ge\HH^d(\EE_{n,k})+\HH^d(\EE_{n,k^\ast})\ge \ell_n^d
$$
for every $E_{n,k}\in\tilde\EE$. Since $\#\tilde\EE>c_12^{dn}$ (see \eqref{f53}), we get \eqref{f519} with
$c_3=c_0[2T^s]^{-1}$.
\end{proof}

The inequality \eqref{f26} is a particular case of Corollary \ref{co56}. Indeed, note
that $\mu=\rho\HH^d|_{E_n}$, where $\rho=2^{-dn}\ell_n^{-d}$. Multiplying both parts of \eqref{f519} by $\rho$,
we get \eqref{f26}.

\section{The capacity $\g_{s,+}$ of Cantor sets: proof of Theorem \ref{th23}}

As we mentioned in Section~2, our arguments are similar to those in \cite{MT} (which in turn use the ideas in \cite{MTV}). On the other hand, there are certain differences as well. For instance, in \cite{MT} the harmonicity of
the Riesz transform (outside the support) was used in an essential way. Since $s$ is not necessarily equal to $d-1$,
we cannot use harmonicity, and we cannot work with the measure supported on the boundaries of certain cubes, our measure will be supported also on the interior of these cubes. Moreover, and this is more essential, \cite{MT} uses a certain regularity of their Cantor sets. We have another type of Cantor sets, lacking this regularity (namely, not all our cubes $E_{j,l}$ are separated enough). This creates specific difficulties. There is a number of other differences (for example, we do not use Cotlar's inequality). Also, Mateu and Tolsa \cite{MT}, while claiming the result for all $s\in(0,d)$ (and for their Cantor sets), give the proof only for $s=d-1$. This is why we wish to present a full proof, even though it follows the idea of \cite{MT}.

We need the following characterization of $\g_{s,+}$ obtained in \cite{V}, Chapter~5:
\begin{equation}\label{f61}
\g_{s,+}(E)\approx\sup\{\|\eta\|:\eta\in \Sigma_s,\ \supp\eta\subset E,\
\pmb|\mathfrak{R}_{\eta}^s\pmb|\le1\},\quad 0<s<d
\end{equation}
(see \eqref{f21} for definition of $\Sigma_s$). Following \cite{MT}, we introduce the capacity
\begin{equation}\label{f62}
\g_{s,+}^\mu(E_n)=\sup\{\tau:0\le\tau\le1,\ \pmb|\mathfrak{R}_{\tau\mu}^s\pmb|\le1\},
\end{equation}
where $\mu$ is the probability measure defined in Section~1. Using \eqref{f25} and \eqref{f411}, we have
$$
\pmb|\mathfrak{R}_{\tau\mu}^s\pmb|=\tau\pmb|\mathfrak{R}_{\mu}^s\pmb|
\approx\tau\biggl[\sum_{j=1}^n\t_j^2\biggr]^{1/2}\approx\tau\biggl[\sum_{p=1}^m\T_p^2\biggr]^{1/2},\quad
\T_p=\frac{2^{-dj_p}}{\ell_{j_p}^s}\,.
$$
Hence,
\begin{equation}\label{f63}
\g_{s,+}^\mu(E_n)\approx\biggl[\sum_{j=1}^n\t_j^2\biggr]^{-1/2}\approx\biggl[\sum_{p=1}^m\T_p^2\biggr]^{-1/2},
\end{equation}
where the constants of comparison depend only on $d$ and $s$. It is easy to see that the measure
$\eta:=c\bigl[\sum_{j=1}^n\t_j^2\bigr]^{-1/2}\mu$ with $c=c(d,s)$ belongs to $\Sigma_s$. Now the relation \eqref{f61} and the upper bound in \eqref{f25} imply the estimate
$$
\g_{s,+}(E_n)>c\biggl[\sum_{j=1}^n\t_j^2\biggr]^{-1/2}\approx\g_{s,+}^\mu(E_n),\quad c=c(d,s).
$$
Thus, it is sufficient to prove that
\begin{equation}\label{f64}
\g_{s,+}(E_n)<C_0\g_{s,+}^\mu(E_n),\quad C_0=C_0(d,s).
\end{equation}

Let $\nu$ be a positive Radon measure supported on a compact set $E$ in $\R^d$, for which $\|R_{\nu}^s(x)\|_{L^\infty(\R^d)}\le1$. It is shown in \cite{MPV}, p.~217, that the last inequality implies the estimate
\begin{equation}\label{f65}
\nu(B(x,r))\le Cr^s,\quad x\in \R^d,\ r>0.
\end{equation}
The arguments in this part of the proof of Lemma~4.1 in \cite{MPV} are valid not only for $0<s<1$, but for $0<s<d$ as well
(the reference [P], Lemma~11 in \cite{MPV} should be replaced by [P], Lemma~3.1). For $s=d-1$, this fact is also noted in \cite{V}, p.~46. Hence, $\nu(E)\le C\HH^s(E)$. By the definition of $\g_{s,+}$ (see Section~2) we have
\begin{equation}\label{f66}
\g_{s,+}(E)\le C\HH^s(E),\ C=C(d,s),\text{ for any compact set $E\subset\R^d$}.
\end{equation}
(The inequality \eqref{f66} also follows from \cite[Lemma~3.2]{P}.)

We will prove \eqref{f64} by induction on $n$. The induction hypothesis is
\begin{equation}\label{f67}
\g_{s,+}(E_q)<C_0\g_{s,+}^\mu(E_q),\quad 1\le q<n,
\end{equation}
where the constant $C_0=C_0(d,s)$ will be specified later. Let
$$
S_k=\T_1^2+\T_2^2+\dots+\T_k^2,\quad 1\le k\le m.
$$
Suppose that $\frac12S_m<S_1$. Then by \eqref{f63} we get $\g_{s,+}^\mu(E_n)>c\ell_1^s$. This inequality together with \eqref{f66} yield the estimate
$$
\g_{s,+}(E_n)\le\g_{s,+}(E_1)\le C\ell_1^s<C'\g_{s,+}^\mu(E_n),\quad C'=C'(d,s).
$$
In particular, we get \eqref{f64} for $n=1$, if $C'<C_0$. Moreover, we can assume without loss of generality that $\frac12S_m\ge S_1$. Hence, there exists $K$, $1\le K<m$, such that
\begin{equation}\label{f68}
S_K\le\frac12S_m<S_{K+1}.
\end{equation}
We consider two cases.

{\bf Case 1.} For some constant $A_0=A_0(d,s)$ to be determined below,
$$
\g_{s,+}(E_{j_{K+1}-1,k}\cap E_n)> A_02^{-d(j_{K+1}-1)}\g_{s,+}(E_n),\quad k=1,\dots,2^{d(j_{K+1}-1)}.
$$
The set $E_{j_{K+1}-1,k}\cap E_n$ is constructed exactly in the same way as $E_n$, starting with $\s_{j_{K+1}}=\ell_{j_{K+1}}$ instead of $\s_1=\ell_1$, and with $q=n-j_{K+1}+1$ instead of $n$. Hence,
\begin{align*}
\g_{s,+}(E_n)&<A_0^{-1}2^{d(j_{K+1}-1)}\g_{s,+}(E_{j_{K+1}-1,k}\cap E_n)\stackrel{\eqref{f67}}{<}
A_0^{-1}2^{d(j_{K+1}-1)}C_0\g_{s,+}^{\mu}(E_{j_{K+1}-1,k}\cap E_n)\\
&\stackrel{\eqref{f63}}{<}A_0^{-1}C_0C2^{d(j_{K+1}-1)}\bigg[\sum_{p=K+1}^m\bigg(\frac
{2^{-d(j_p-j_{K+1}+1)}}{\ell_{j_p}^s}\bigg)^2\bigg]^{-1/2}
=A_0^{-1}C_0C\biggl[\sum_{p=K+1}^m\T_p^2\biggr]^{-1/2}\\
&=A_0^{-1}C_0C[S_m-S_K]^{-1/2}\stackrel{\eqref{f68}}{\le}\sqrt2A_0^{-1}C_0CS_m^{-1/2}\\
&\stackrel{\eqref{f63}}{<}A_0^{-1}C_0C'\g_{s,+}^{\mu}(E_n)<C_0\g_{s,+}^{\mu}(E_n),
\end{align*}
if $A_0\ge C'=C'(d,s)$. We get \eqref{f64}.

{\bf Case 2.} For the constant $A_0$ determined above,
\begin{equation}\label{f69}
\g_{s,+}(E_{j_{K+1}-1,k}\cap E_n)\le A_02^{-d(j_{K+1}-1)}\g_{s,+}(E_n),\quad k=1,\dots,2^{d(j_{K+1}-1)}.
\end{equation}
As in \cite{MT}, we again distinguish two cases, namely $\T_{K+1}^2>S_K$ and $\T_{K+1}^2\le S_K$. If $\T_{K+1}^2>S_K$,
then
$$
S_{K+1}=S_K+ \T_{K+1}^2<2\T_{K+1}^2,
$$
and we have
\begin{align*}
\g_{s,+}(E_n)&\le\g_{s,+}(E_{j_{K+1}})\stackrel{\eqref{f66}}{\le}C\HH^s(E_{j_{K+1}})\le C'2^{dj_{K+1}} \ell_{j_{K+1}}^s\\
&=\frac{C'}{\T_{K+1}}<\frac{\sqrt2C'}{S_{K+1}^{1/2}}\stackrel{\eqref{f68}}{<}\frac{2C'}{S_m^{1/2}}
\stackrel{\eqref{f63}}{<}C_0\g_{s,+}^{\mu}(E_n).
\end{align*}
Thus, \eqref{f64} holds if $C_0=C_0(d,s)$ is sufficiently big.

Suppose now that $\T_{K+1}^2\le S_K$. Then
\begin{equation}\label{f610}
\frac12S_m\stackrel{\eqref{f68}}{<}S_{K+1}\le2S_K<2S_m.
\end{equation}
We consider the measure
$$
\eta=\frac{\g_{s,+}(E_n)}{\HH^d(E_{j_K})}\HH^d\big|_{E_{j_K}}=
\frac{\g_{s,+}(E_n)}{2^{dj_K}\ell_{j_K}^d}\HH^d\big|_{E_{j_K}}.
$$
Clearly, $\|\eta\|=\g_{s,+}(E_n)$. We will show that $\pmb|\mathfrak{R}_{\eta}^s\pmb|\le C(d,s)$. Assuming this fact for a moment, we get
$$
\g_{s,+}(E_n)=\|\eta\|\stackrel{\eqref{f62}}{\le}C\g_{s,+}^\mu(E_{j_K})\stackrel{\eqref{f63}}{\le}
C'S_K^{-1/2}\stackrel{\eqref{f610}}{<}2C'S_m^{-1/2}\stackrel{\eqref{f63}}{<}C''\g_{s,+}^\mu(E_n),
$$
and \eqref{f64} follows.

To prove that $\pmb|\mathfrak{R}_{\eta}^s\pmb|$ is bounded, we will use the local $T(b)$ theorem of Christ \cite{C}. According to the Main Theorem~10 in \cite{C}, p.~605, it is enough to prove that $\eta$ satisfies the following conditions:

(i) $\eta(B(x,r))\le Cr^s,\ x\in E_{j_K},\ r>0$;

(ii) $\eta(B(x,2r))\le C\eta(B(x,r)),\ x\in E_{j_K},\ r>0$;

(iii) for each ball $B$ centered at a point in $E_{j_K}$, there exists a function $b_B$ in $L^\infty(\eta)$, supported on $B$, such that $|b_B|\le C$ and $|\mathfrak{R}_{\eta,\e}^s b_B|\le C$ $\eta$-almost everywhere on $E_{j_K}$, and
$\eta(B)\le C|\int b_B\,d\eta|$.

First we verify condition (i). Let $B=B(x,r)$ be a ball centered at $x\in E_{j_K}$. If $r<\ell_{j_K}$, we have
\begin{equation}\label{f611}
\begin{split}
\eta(B)\le C(d)\frac{\g_{s,+}(E_n)}{2^{dj_K}\ell_{j_K}^d}\,r^d\le C(d)\frac{\g_{s,+}(E_{j_K})}{2^{dj_K}\ell_{j_K}^d}\,r^d\\
\stackrel{\eqref{f66}}{\le}C'\frac{2^{dj_K}\ell_{j_K}^s}{2^{dj_K}\ell_{j_K}^d}\,r^d
=\frac{C'}{\ell_{j_K}^{d-s}}r^{d-s}r^s<C'r^s.
\end{split}
\end{equation}

Suppose that $r\ge\ell_{j_K}$. Let $j\le j_K$ be the least integer for which $\ell_j\le r$. Then $B$ may intersect at most $C=C(d)$ cubes $E_{j,k}$. Hence,
$$
\eta(B)\le C\eta(E_{j,k})=\frac{C\g_{s,+}(E_n)}{2^{dj}}
\le\frac{C\g_{s,+}(E_j)}{2^{dj}\ell_{j}^s}\,\ell_j^s\stackrel{\eqref{f66}}{\le}C'\ell_{j}^s\le C'r^s.
$$

Using the same kind of ideas, it is not difficult to verify condition (ii) as well.

Thus, we only need to check the hypothesis (iii). Again, let a ball $B=B(x',r)$ centered at $x'\in E_{j_K}$ be given. If $r\le2\sqrt{d}\ell_{j_K}$, we set $b_B=\chi_{B}$. Then
\begin{align*}
|\mathfrak{R}_{\eta,\e}^s b_B(x)|&\le\int_B\frac1{|x-y|^s}\, \frac{\g_{s,+}(E_n)}{2^{dj_K}\ell_{j_K}^d}\,d\HH^d(y)\\
&\le\frac{\g_{s,+}(E_{j_K})}{2^{dj_K}\ell_{j_K}^d}\int_0^{2\sqrt{d}\ell_{j_K}}\frac1{r^s}\,r^{d-1}\,dr
\stackrel{\eqref{f66}}{\le}\frac{C(d,s)}{\ell_{j_K}^{d-s}}\,\ell_{j_K}^{d-s}=C(d,s).
\end{align*}

Suppose that $r>2\sqrt{d}\ell_{j_K}$. Let $j_B$ be the least integer for which there is a cube of generation $j_B$ (i.e. the cube of $E_{j_B}$) contained in $\frac12B$. Clearly, $j_B\le j_K$ (since $B$ is centered at $x'\in E_{j_K}$). We will construct the function $b_B$ supported on $B$. By definition of the capacity $\g_{s,+}$, there is the positive Radon measure $\nu$ supported on $E_n$ such that
$\|R_{\nu}^s(x)\|_{L^\infty(\R^d)}\le1$ and $\nu(E_n)>\frac12\g_{s,+}(E_n)$. Hence, there is the cube $E_{j_B,k}$ for which
\begin{equation}\label{f612}
\nu(E_{j_B,k})\ge2^{-dj_B}\nu(E_n)>\frac12 2^{-dj_B}\g_{s,+}(E_n)=\frac12\eta(E_{j_B,k}).
\end{equation}

We need the localization lemma in \cite{MPV}. Let $Q=Q(w,\ell)$ be any cube (see \eqref{f15} for the notation $Q(w,\ell)$). Let $\f_Q$ be an infinitely differentiable function supported on $2Q$ and such that
$\|\partial^{\mathbf k}\f_Q\|_{L^\infty(\R^d)}\le C(s)\ell^{-|\mathbf k|}$, $0\le|\mathbf k|\le d$, $0\le\f_Q(y)\le1$, $\f_Q(y)=1$ on $Q$. By Lemma~3.1 in \cite{MPV}, p.~207,
\begin{equation}\label{f613}
\|R_{\f_Q\nu}^s\|_{L^\infty(\R^d)}\le C,\quad C=C(d,s).
\end{equation}

To simplify notation, set $\f^B=\f_Q$ for $Q=E_{j_B,k}$. At first we define $b_B$ when $E_{j_B,k}\subset\frac12B$. In this case, supp\,$\f^B\subset2E_{j_B,k}\subset B$. We set
\begin{equation}\label{f614}
b_B(x)=b_{j_B,k}(x)=\sum_{i:E_{{j_K},i}\subset 2E_{j_B,k}} \frac{(\f^B\nu)(E_{j_K,i})}{\eta(E_{j_K,i})}\,\chi_{E_{j_K,i}}(x).
\end{equation}
If $E_{j_B,k}\not\subset\frac12B$, we choose any cube $E_{j_B,i}\subset\frac12B$, and define $b_B$ by translation of $b_{j_B,k}$, namely
$$
b_B(x)=b_{j_B,k}(x+x_{j_B,k}-x_{j_B,i})
$$
(we recall that $x_{j,i}$ is the center of $E_{j,i}$). Clearly,
$$
\int b_B(x)\,d\eta\ge\nu(E_{j_B,k})\stackrel{\eqref{f612}}{>}\frac12\eta(E_{j_B,k})> c\,\eta(B)
$$
(the last inequality follows from the fact that there are at most $A=A(d)$ cubes of $j_B$-th generation in $B$).

To prove that $b_B$ is bounded, we apply \eqref{f613} to a cube $Q=Q_{K+1,l}=Q(w_{K+1,l},2T\ell_{j_{K+1}})$ (see Section~1 for notations). By \eqref{f16}, the cubes $Q_{K+1,l}$ are separated. Hence, $\f_Q\nu=\nu|Q =\nu|(E_{j_{K+1}-1,l}\cap E_n)$, and by \eqref{f613} we have $\|R_{\nu|(E_{j_{K+1}-1,l}\cap E_n)}^s\|_{L^\infty(\R^d)}\le C$. Thus,
\begin{align*}
\nu(E_{j_{K+1}-1,l}\cap E_n)&\le C\g_{s,+}(E_{j_{K+1}-1,l}\cap E_n)\\
&\stackrel{\eqref{f69}}{\le}CA_02^{-d(j_{K+1}-1)}\g_{s,+}(E_n)=CA_02^{-d(j_{K+1}-1)}\|\eta\|.
\end{align*}
Since $E_{{j_K},i}$ consists of $2^{d(j_{K+1}-j_K-1)}$ cubes $E_{j_{K+1}-1,l}$, we get the estimate
\begin{equation}\label{f615}
\begin{split}
\nu(E_{j_K,i})&\le CA_02^{d(j_{K+1}-j_K-1)}\,2^{-d(j_{K+1}-1)}\|\eta\|\\
&=C'2^{-dj_{K}}\|\eta\|=C'\eta(E_{j_K,i}),\quad i=1,\dots,2^{dj_K}.
\end{split}
\end{equation}
Now \eqref{f614} implies that
\begin{equation}\label{f616}
0\le b_B(x)\le C,\quad x\in\R^d,\quad C=C(d,s).
\end{equation}

To complete the proof we only need to check that
\begin{equation}\label{f617}
|\mathfrak{R}_{\eta,\e}^s b_B(x)|\le C\ \eta\text{-a.~e. on }E_{j_K}.
\end{equation}
By translation invariance, it is sufficient to consider the case $b_B(x)=b_{j_B,k}(x)$. The same estimates as in \eqref{f611} yield the inequality $\eta(B(x,t))\le C\ell_{j_K}^{s-d}t^d$, $t>0$. Integrating by parts, for every
$a\in(0,4\sqrt d\ell_{j_K})$ we get
\begin{multline*}
\bigg|\int_{a\le|x-y|<4\sqrt d\ell_{j_K}}\frac{x-y}{|x-y|^{s+1}}b_B(y)\,d\eta(y)\bigg|\stackrel{\eqref{f616}}{\le}
C\int_0^{4\sqrt d\ell_{j_K}}\frac1{t^s}\,d\eta(B(x,t))\\
=C\bigg(\frac{\eta(B(x,4\sqrt d\ell_{j_K}))}{(4\sqrt d\ell_{j_K})^s}
+s\int_0^{4\sqrt d\ell_{j_K}}\frac{\eta(B(x,t))}{t^{s+1}}\,dt\bigg)=C'(d,s).
\end{multline*}
Thus, it is enough to prove \eqref{f617} for $\e\ge4\sqrt d\ell_{j_K}$ and $r>2\sqrt d\ell_{j_K}$.

Set $\f^{(\e)}=\f_Q$ for $Q=Q(x,\e/\sqrt d)$. Then supp\,$\f^{(\e)}\subset B(x,\e)$. As before, let $\f^B=\f_Q$ for $Q=E_{j_B,k}$. Applying \eqref{f613} to the measure $\nu$ with $\f_Q=\f^B$, and then to the measure $\f^B\nu$ instead of $\nu$ and with $\f_Q=\f^{(\e)}$, we obtain
\begin{equation}\label{f618}
\|R_{\f^B(1-\f^{(\e)})\nu}^s\|_{L^\infty(\R^d)}\le
\|R_{\f^B\nu}^s\|_{L^\infty(\R^d)}+
\|R_{\f^{(\e)}\f^B\nu}^s\|_{L^\infty(\R^d)}<C,\quad C=C(d,s).
\end{equation}
Let
$$
\CC=\{\cup E_{j_K,i}:E_{j_K,i}\cap2E_{j_B,k}\ne\varnothing,\ E_{j_K,i}\subset\R^d\setminus B(x,\e)\}.
$$
Suppose that $y\in(\supp\f^B(1-\f^{(\e)})\nu)\setminus\CC$. Then $y\in E_{j_K,i}$ for which
$E_{j_K,i}\cap(B(x,\e)\setminus Q(x,\e/\sqrt d))\ne\varnothing$. Hence,
$$
\frac{\e}{4\sqrt d}<\frac{\e}{2\sqrt d}-\ell_{j_K}\le|y-x|\le\e+\sqrt d\ell_{j_K}<5\e.
$$
Therefore,
\begin{multline*}
\bigg|\int_{(\supp\f^B(1-\f^{(\e)})\nu)\setminus\CC}\frac{x-y}{|x-y|^{s+1}}\,d(\f^B(1-\f^{(\e)})\nu)(y)\bigg|\\
<\frac{C}{\e^s}\cdot(\f^B(1-\f^{(\e)})\nu)(B(x,5\e))\stackrel{\eqref{f65}}{<}C'(d,s).
\end{multline*}
This estimate and \eqref{f618} imply the inequality
\begin{equation}\label{f619}
\|R_{(\f^B(1-\f^{(\e)})\nu)|\CC}^s\|_{L^\infty(\R^d)}=
\|R_{\f^B\nu|\CC}^s\|_{L^\infty(\R^d)}\le C,\quad C=C(d,s).
\end{equation}
In the same way we will show that
\begin{equation}\label{f620}
\bigg|\int_{\{|y-x|\ge\e\}\setminus\CC}\frac{x-y}{|x-y|^{s+1}}\,b_B(y)\,d\eta(y)\bigg|<C.
\end{equation}
Indeed, $\f^{(\e)}(y)=0$ for $|y-x|\ge\e$, and hence
$$
(\supp b_B\setminus B(x,\e))\setminus\CC\subseteq(\supp\f^B(1-\f^{(\e)})\nu)\setminus\CC.
$$
The same arguments as above together with \eqref{f616} and the property (i) of $\eta$ imply \eqref{f620}.

It remains to establish the inequality
\begin{equation}\label{f621}
\bigg|\int_{\CC}\frac{x-y}{|x-y|^{s+1}}\,b_B(y)\,d\eta(y)
-\int_{\CC}\frac{x-y}{|x-y|^{s+1}}\,d(\f_B\nu)(y)\bigg|<C \text{ for $\eta$-a.e. }x\in\R^d.
\end{equation}
Then \eqref{f617} will follow from \eqref{f619} and \eqref{f620}.

For every cube $E_{j_K,i}\subset\CC$ we have
\begin{align*}
\bigg|&\int_{E_{j_K,i}}\frac{x-y}{|x-y|^{s+1}}\,b_B(y)\,d\eta(y)
-\int_{E_{j_K,i}}\frac{x-y}{|x-y|^{s+1}}\,d(\f_B\nu)(y)\bigg|\\
\stackrel{\eqref{f614}}{=}
\bigg|&\int_{E_{j_K,i}}\frac{x-y}{|x-y|^{s+1}}\,\frac{(\f^B\nu)(E_{j_K,i})}{\eta(E_{j_K,i})}\,d\eta(y)
-\int_{E_{j_K,i}}\frac{x-y}{|x-y|^{s+1}}\,d(\f_B\nu)(y)\bigg|\\
=\bigg|&\int_{E_{j_K,i}}\bigg(\frac{x-y}{|x-y|^{s+1}}-\frac{x-x_{j_K,i}}{|x-x_{j_K,i}|^{s+1}}\bigg)
\frac{(\f^B\nu)(E_{j_K,i})}{\eta(E_{j_K,i})}\,d\eta(y)\\
-&\int_{E_{j_K,i}}\bigg(\frac{x-y}{|x-y|^{s+1}}-\frac{x-x_{j_K,i}}{|x-x_{j_K,i}|^{s+1}}\bigg)\,
d(\f_B\nu)(y)\bigg|\stackrel{\eqref{f513}}{\le}C\ell_{j_K}\frac{(\f_B\nu)(E_{j_K,i})}{|x-x_{j_K,i}|^{s+1}}\,.
\end{align*}
Hence, the left-hand side of \eqref{f621} does not exceed
$$
C\ell_{j_K}\sum_{i:E_{j_K,i}\subset\CC}\frac{(\f_B\nu)(E_{j_K,i})}{|x-x_{j_K,i}|^{s+1}}
<C'\ell_{j_K}\int_{\e}^\infty\frac{\nu(B(x,t))}{t^{s+2}}\,dt\stackrel{\eqref{f65}}{<}
C''\ell_{j_K}\int_{\e}^\infty\frac{dt}{t^2}<C(d,s),
$$
since $\e>\ell_{j_K}$. Theorem \ref{th23} is proved.

\section{Applications}

We start with the extension of Theorem \ref{th23} to infinite sequences $\{\s_j\}$. Let $\s_1$, $\s_2,\dots$ be positive numbers such that
\begin{equation}\label{f71}
2\s_{j+1}\le\s_j,\quad j=1,2,\dots,
\end{equation}
and let numbers $\a\in(0,\frac12)$, $T\in(1,\frac1{2\a})$ be given. We define the set $J=\{j_1,j_2,\dots\}$ and the ``regularized'' sequence $\{\ell_j\}$ in the same way as in Section~1 for $j<n$. Namely, set $j_1=1$. If $j_p\in J$, then $j_{p+1}$ is the least $j>j_p$ for which $\s_j\le\a2^{-(j-j_p)}\s_{j_p}$. Possibly, $\s_j>\a2^{-(j-j_p)}\s_{j_p}$ for all $j>j_p$. In this case the sequence $J$ is finite: $J=\{j_1,\dots,j_m\}$, $m\ge1$. Clearly,
\begin{equation}\label{f72}
J \text{ is infinite if and only if }\lim_{j\to\infty}2^j\s_j=0.
\end{equation}
Furthermore, we set
$$
\ell_j=2^{-(j-j_p)}\s_{j_p},\quad j_p\le j<j_{p+1},\quad p=1,2,\dots
$$
If $J$ is finite, then $p=1,\dots,m$ with $j_{m+1}:=\infty$. The Cantor set $E$ is defined by the relation
$$
E=\bigcap_{j_p\in J}E_{j_p},
$$
where the sets $E_{j_p}$ are defined in Section~1. In particular, if $J$ is finite then $E=E_{j_m}$.

\begin{theorem}\label{th71} Let $d\ge1$, $s\in(0,d)$, and let a sequence $\{\s_j\}_1^\infty$ satisfies \eqref{f71}. Then
\begin{equation}\label{f73}
c\biggl[\sum_{j=1}^\infty\biggl(\frac{2^{-dj}}{\s_j^s}\biggr)^2\biggr]^{-1/2}\le\g_{s,+}(E)
\le C\biggl[\sum_{j=1}^\infty\biggl(\frac{2^{-dj}}{\s_j^s}\biggr)^2\biggr]^{-1/2},
\end{equation}
where the positive constants $c,C$ and the parameters $\a$, $T$ of the Cantor set $E$ depend only on $d$ and $s$.
\end{theorem}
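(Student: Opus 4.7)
The plan is to realize $E$ as a decreasing intersection $E=\bigcap_p E^{(p)}$, where $E^{(p)}:=E_{j_p}$ is the $j_p$-th level of the construction. Each $E^{(p)}$ is itself a ``regularized'' Cantor set of the type treated in Theorem~\ref{th23}, built from the truncated sequence $\sigma_1,\dots,\sigma_{j_p}$ with the same parameters $\alpha,T$ depending only on $d$ and $s$ (the innermost edge length $\ell_{j_p}=\sigma_{j_p}$ in the infinite construction differs from the corresponding finite construction of Section~1 by at most a bounded factor, which is absorbed into the constants in \eqref{f27}). Theorem~\ref{th23} therefore gives
\[
\gamma_{s,+}(E^{(p)})\approx\Bigl[\sum_{j=1}^{j_p}(2^{-dj}/\sigma_j^s)^2\Bigr]^{-1/2}.
\]

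The upper bound in \eqref{f73} is immediate from monotonicity $\gamma_{s,+}(E)\le\gamma_{s,+}(E^{(p)})$ and letting $p\to\infty$. This also shows $\gamma_{s,+}(E)=0$ whenever the series diverges, so the lower bound is vacuous in that case, and I henceforth assume convergence.

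For the lower bound I would convert each finite-case lower bound into a concrete extremal measure via the characterization \eqref{f61}: pick $\eta_p\in\Sigma_s$ supported in $E^{(p)}$ with $\pmb|\mathfrak{R}_{\eta_p}^s\pmb|\le 1$ and $\|\eta_p\|\ge c[\sum_{j=1}^{j_p}(2^{-dj}/\sigma_j^s)^2]^{-1/2}$. Since all $\eta_p$ sit inside the compact $E^{(1)}$ with uniformly bounded total mass, extract a weak$^\ast$ convergent subsequence $\eta_{p_k}\rightharpoonup^\ast\eta$. Then $\supp\eta\subset\bigcap_p E^{(p)}=E$ by compactness and nesting; the growth condition $\eta\in\Sigma_s$ survives because $\mu\mapsto\mu(\overline{B(x,r)})$ is upper semicontinuous under weak$^\ast$ convergence; and $\|\eta\|=\lim\|\eta_{p_k}\|$, since weak$^\ast$ convergence of measures supported in a common compact preserves the total mass. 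Invoking \eqref{f61} one last time yields the lower bound in \eqref{f73}.

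The main obstacle is verifying that the operator bound $\pmb|\mathfrak{R}_\eta^s\pmb|\le 1$ is preserved under the weak$^\ast$ limit. For each fixed $\varepsilon>0$ the truncated kernel $K^s(y-x)\chi_{|y-x|>\varepsilon}$ is bounded and continuous off the diagonal, so the bilinear form $(f,g)\mapsto\iint K_\varepsilon^s(y-x)f(y)g(x)\,d\eta(y)d\eta(x)$ passes to the weak$^\ast$ limit when tested against continuous compactly supported $f,g$; the uniform $\Sigma_s$ growth controls the contribution of any small neighborhood of the diagonal, allowing extension to all of $L^2(\eta)$ by density. Taking the supremum over $\varepsilon$ then gives $\pmb|\mathfrak{R}_\eta^s\pmb|\le 1$, and Theorem~\ref{th71} is proved.
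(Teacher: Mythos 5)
Your lower-bound route is genuinely different from the paper's, and it can be made to work, but the step you yourself single out as the main obstacle is argued imprecisely. The paper never passes the $L^2$ operator bound to the limit: it takes measures $\nu_p$ on $E_{j_p}$ admissible for the defining condition $\|R^s_{\nu_p}\|_{L^\infty(\R^d)}\le1$, extracts a weak limit $\nu$, uses continuity of $R^s_{\nu_p}$ off $\supp\nu_p$ to get $|R^s_{\nu}(x)|\le1$ for $x\notin E$, and concludes from $\HH^d(E)=0$ (guaranteed by \eqref{f72} when $J$ is infinite). In your version, via \eqref{f61}, the delicate point is that for fixed $\e$ the sharp cut-off $\chi_{|y-x|>\e}$ is discontinuous on the sphere $\{|x-y|=\e\}$ --- not merely ``off the diagonal'' --- and for $s\le d-1$ that sphere may carry positive $\eta\times\eta$ mass, so the truncated bilinear form need not converge under weak$^\ast$ convergence for every $\e$; moreover the $\Sigma_s$ growth is not what justifies the density extension (the fixed-$\e$ operator is a priori bounded because the kernel is bounded and $\eta$ is finite). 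The standard repair: only countably many radii satisfy $(\eta\times\eta)\{|x-y|=\e\}>0$; pass to the limit for the remaining radii, and compare an arbitrary truncation with a nearby good one by a Schur test on the annulus using $\eta\in\Sigma_s$. This may yield $\pmb|\mathfrak{R}_{\eta}^s\pmb|\le C(d,s)$ rather than $\le1$, which is harmless since \eqref{f61} is only a comparability (rescale $\eta$). Also, to keep $\eta\in\Sigma_s$ you need the open-set half of the portmanteau theorem, $\eta(B(x,r))\le\liminf_k\eta_{p_k}(B(x,r))$; upper semicontinuity on closed balls goes in the wrong direction. With these repairs your argument has the mild advantage of not needing $\HH^d(E)=0$.

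There is also a genuine omission: the case when $J$ is finite (equivalently $2^j\s_j\not\to0$, see \eqref{f72}), in which $E=E_{j_m}$ and there is no sequence $p\to\infty$. Your monotonicity argument then only gives the upper bound with the partial sum over $j\le j_m$, which is weaker than \eqref{f73}, since the full series is larger. The paper treats this case first: for $j>j_m$ one has $\s_j>\a2^{-(j-j_m)}\s_{j_m}$, so the tail $\sum_{j>j_m}(2^{-dj}/\s_j^s)^2$ is dominated by $C(\a,d,s)(2^{-dj_m}/\s_{j_m}^s)^2$ (a convergent geometric series because $s<d$), and then \eqref{f27} with $n=j_m$ gives both bounds. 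Add this case; together with the repaired limit step your proof is complete.
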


\begin{proof} Suppose that $J$ is finite. Then
$$
\sum_{j=j_m+1}^\infty\biggl(\frac{2^{-dj}}{\s_j^s}\biggr)^2
<\sum_{j=j_m+1}^\infty\biggl(\frac{2^{-dj}}{\a^s2^{-(j-j_m)s}\s_{j_m}^s}\biggr)^2
=\frac1{\a^{2s}}\biggl(\frac{2^{-dj_m}}{\s_{j_m}^s}\biggr)^2\sum_{j=j_m+1}^\infty2^{-2(d-s)(j-j_m)}.
$$
Now \eqref{f73} follows from \eqref{f27} with $n=j_m$.

Assume that $J$ is infinite. Since
$$
\g_{s,+}(E)\le\g_{s,+}(E_{j_p})\stackrel{\eqref{f27}}{\le}
C\biggl[\sum_{j=1}^{j_p}\biggl(\frac{2^{-dj}}{\s_j^s}\biggr)^2\biggr]^{-1/2},\quad p=1,2,\dots,
$$
we get the estimate from above. We also get \eqref{f73} (that is $\g_{s,+}(E)=0$) if the series in \eqref{f73} diverges. Thus, we may assume that this series converges. The definition of $\g_{s,+}$ and Theorem~\ref{th23} imply the existence of measures $\nu_p$, $p=1,2,\dots$, such that
$$
\supp\nu_p=E_{j_p},\quad \|\nu_p\|\approx\biggl[\sum_{j=1}^{j_p}\biggl(\frac{2^{-dj}}{\s_j^s}\biggr)^2\biggr]^{-1/2},\quad
\|R_{\nu_p}^s\|_{L^\infty(\R^d)}\le1.
$$
We may extract a weakly convergent subsequence $\{\nu_{p_i}\}$. Denote by $\nu$ the weak limit of this subsequence as $i\to\infty$. Clearly,
$$
\supp\nu=E,\quad \|\nu\|\approx\biggl[\sum_{j=1}^{\infty}\biggl(\frac{2^{-dj}}{\s_j^s}\biggr)^2\biggr]^{-1/2}.
$$
For any $x\in\R^d\setminus\supp\nu_p$, we have $|R_{\nu_p}^s(x)|\le1$ (otherwise $\|R_{\nu_p}^s\|_{L^\infty(\R^d)}>1$ by continuity of $R_{\nu_p}^s(x)$ on $\R^d\setminus\supp\nu_p$). Hence,
$$
|R_{\nu}^s(x)|=\lim_{p\to\infty}|R_{\nu_p}^s(x)|\le1,\quad x\in\R^d\setminus\supp\nu.
$$
By \eqref{f72}, $\HH^d(E)=0$. Hence, $\|R_{\nu}^s\|_{L^\infty(\R^d)}\le1$, and Theorem \ref{th71} is proved.
\end{proof}

In \cite{ENV} we obtained estimates for the Hausdorff content of the set where the Riesz transform $R_\nu^s(x)$ is large. We also obtained certain relations between Hausdorff content and the capacity $\g_{s,+}$.
We are going to show that these estimates are attained on the Cantor sets defined above. The possibility of considering {\it arbitrary} sequences $\{\s_j\}$ satisfying \eqref{f71} enables us to prove the corresponding assertions for {\it any} gauge function $h$ (with the natural assumption that $\frac{h(r)}{r^d}$ is nonincreasing -- see the explanation below) without any additional conditions.

By a {\it gauge} (or {\it measure}) function, we shall understand any continuous strictly increasing function
$h:[0,+\infty)\to[0,+\infty)$ such that $h(0)=0$ and $\lim_{r\to+\infty}h(r)=+\infty$.

The {\it Hausdorff content $M_h(G)$} of a set $G\subset\R^d$ is defined by
$$
M_h(G)=\inf\sum_j h(r_j),
$$
where the infimum is taken over all (at most countable) coverings of $G$ by balls of radii $r_j$.
Later on we assume that $\frac{h(r)}{r^d}$ is nonincreasing. This condition, which may seem to be a regularity condition at the first glance, is actually
not a restriction at all. It was proved in \cite{AH}, p.~133, Proposition 5.18, that for any measure function $h$ either $M_h(G)=0$ for all $G\subset\R^d$,
or there is another measure function $h^\ast$ such that $\frac{h^\ast(r)}{r^d}$ is nonincreasing and for which Hausdorff contents $M_h$ and $M_{h^\ast}$ coincide up to a constant factor depending only on the dimension $d$.

For $P>0$, set
\begin{align*}
\ZZ(\nu,P)&=\{x\in \R^d: R_{\nu}^s(x) \text{ exists and }|R_{\nu}^s(x)|>P\},\\
\ZZ^\ast(\nu,P)&=\{x\in \R^d: R_{\nu,\ast}^s(x)>P\}.
\end{align*}
Clearly, $\ZZ(\nu,P)\subset\ZZ^\ast(\nu,P)$.
Let $h$ be a measure function, $N\ge2$, and let $h^{-1}$ be inverse to $h$. For a measure $\nu$ consisting of $N$ point charges, we have obtained the inequality
$$
\mathbf M\le C(s,d)\frac{\|\nu\|}{P}
\left[\int_{h^{-1}(0.1\,\mathbf M/N)}^{h^{-1}(\mathbf M)}\bigg(\frac{h(t)}{t^s}\bigg)^2\frac{dt}{t}\right]^{1/2},
$$
where $\mathbf M=M_h(\ZZ^\ast(\nu,P))$. This implicit estimate can be written in a simpler form using a certain auxiliary function (see \cite{ENV} for details).

\begin{proposition}\label{pr72} For every $\eta>0$ and $N\ge N_0(d,s)$, one can find a measure $\nu$ which is a linear combination of $N$ Dirac point masses, and such that $\|\nu\|=\eta$, and
\begin{equation}\label{f74}
\mathbf M>c(d,s)\frac{\|\nu\|}{P}
\left[\int_{h^{-1}(C\mathbf M/N)}^{h^{-1}(\mathbf M)}\bigg(\frac{h(t)}{t^s}\bigg)^2\frac{dt}{t}\right]^{1/2},\quad C=C(d,s)\ge1,
\end{equation}
for any $\mathbf M\ge M_h(\ZZ(\nu,P))$.
\end{proposition}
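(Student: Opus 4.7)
\emph{Plan.} Take $\nu$ to be a point-mass measure supported at the $N=2^{dn}$ centers of the terminal cubes of a regularized Cantor set $E_n$ whose generating sequence $\{\s_j\}$ is tailored to $h$. Theorem~\ref{th51} forces $|\tilde R_\nu^s|$ to be large at a positive proportion of these centers; an argument mimicking the end of the proof of Corollary~\ref{co56} then transfers this into a genuine lower bound on $|R_\nu^s|$ over a substantial subset of $E_n$; and a covering computation exploiting the monotonicity of $h(r)/r^d$ converts the resulting $\HH^d$-estimate into an $h$-content estimate matching the right-hand side of \eqref{f74}.

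\emph{Construction.} Given $\eta>0$ and $N=2^{dn}$ with $n$ large enough, introduce a target scale $\mathbf M_*>0$ and set $\s_j := h^{-1}(\mathbf M_*\,2^{-dj})$ for $j=1,\dots,n$. The monotonicity of $h(r)/r^d$ forces $2\s_{j+1}\le\s_j$, so \eqref{f11} holds; applying the regularization of Section~1 (with $\a,T$ depending only on $d,s$) yields $E_n$ with $\ell_j\approx\s_j$, and we set $\nu := (\eta/N)\sum_{k=1}^{N}\delta_{x_{n,k}}$. By construction $h(\ell_j)\approx\mathbf M_*\,2^{-dj}$, hence $\t_j = 2^{-dj}/\ell_j^s \approx h(\ell_j)/(\mathbf M_*\ell_j^s)$; the within-block structure $\ell_j=2^{-(j-j_p)}\ell_{j_p}$ combined with the monotonicity of $h(r)/r^d$ then yields
\begin{equation}\label{ppb1}
\sum_{j=1}^n\t_j^2\;\approx\;\mathbf M_*^{-2}\!\int_{\ell_n}^{\ell_1}\!\Bigl(\frac{h(t)}{t^s}\Bigr)^{\!2}\!\frac{dt}{t}.
\end{equation}
Set $P:=\tfrac{c_0}{2T^s}\eta\bigl[\sum_j\t_j^2\bigr]^{1/2}$ with $c_0,T$ from Theorem~\ref{th51}, and put $\mathbf M_0 := M_h(\ZZ(\nu,P))$.

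\emph{Lower bound on $\mathbf M_0$.} Theorem~\ref{th51} furnishes at least $c_1N$ cubes $E_{n,k}$ on which $|\tilde R_\nu^s(x_{n,k})|>2P$. Repeating the symmetrization argument following \eqref{f524} --- approximating $\tilde R_\nu^s(x)$ by $\tilde R_\nu^s(x_{n,k})$ on $E_{n,k}$ up to an error $\le P/2$ via an inequality of type \eqref{f521}, and pairing each such $E_{n,k}$ with its point-symmetric mate $E_{n,k^\ast}$ to control the direction of the central point-mass contribution $(\eta/N)\delta_{x_{n,k}}$ --- we produce $S\subset E_n$ with $\HH^d(S)\ge c\,N\,\ell_n^d$ and $|R_\nu^s|>P$ throughout $S$. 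Any ball of radius $r\ge\ell_n$ covers at most $C(r/\ell_n)^d$ of the half-balls constituting $S$ while having $h$-mass $h(r)\ge c(r/\ell_n)^d\,h(\ell_n)$ by the monotonicity of $h(r)/r^d$; summing over an arbitrary covering gives $\mathbf M_0\ge c\,N\,h(\ell_n)\approx c\,\mathbf M_*$.

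\emph{Verification of \eqref{f74}.} Since $\mathbf M_0\approx\mathbf M_*$, the comparisons $h^{-1}(C\mathbf M_0/N)\approx\ell_n$ and $h^{-1}(\mathbf M_0)\approx\ell_1$ together with \eqref{ppb1} and the definition of $P$ give
$$
c(d,s)\frac{\eta}{P}\Bigl[\!\int_{h^{-1}(C\mathbf M_0/N)}^{h^{-1}(\mathbf M_0)}\!\Bigl(\tfrac{h(t)}{t^s}\Bigr)^{\!2}\tfrac{dt}{t}\Bigr]^{\!1/2}\;\approx\;c(d,s)\,\mathbf M_0,
$$
and choosing the absolute constant in $P$ small enough makes this strictly smaller than $\mathbf M_0$, yielding \eqref{f74} at $\mathbf M=\mathbf M_0$; for $\mathbf M>\mathbf M_0$ the LHS grows linearly while the widening integration window contributes only sublinear growth (controlled by the monotonicity of $h(r)/r^d$), so the inequality persists. \emph{The main obstacle} is the $h$-content bound on $\mathbf M_0$: the naive estimate $c_1 N\,h(\ell_n)$ could in principle be defeated by covering the many small half-balls with a few large ones, and ruling this out depends crucially on the scale-balance $2^{dj}h(\ell_j)\approx\mathbf M_*$ at every intermediate generation $j$, together with the regularity hypothesis that $h(r)/r^d$ be nonincreasing.
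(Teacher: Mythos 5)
Your overall architecture is the same as the paper's (calibrate the scales by $h(\s_j)=2^{-dj}h(\s_0)$ as in \eqref{f77}, put $N$ equal point masses at the centers, invoke Theorem \ref{th51}/Corollary \ref{co56} to get $|R_\nu^s|>P$ on a fixed proportion of $E_n$, then convert the $\HH^d$-bound into an $h$-content bound), but two steps of your write-up are genuinely defective. The first is the covering step. You assert that a ball of radius $r\ge\ell_n$ meets at most $C(r/\ell_n)^d$ terminal cubes \emph{and} that $h(r)\ge c(r/\ell_n)^d h(\ell_n)$ ``by the monotonicity of $h(r)/r^d$''. That monotonicity gives exactly the opposite inequality, $h(r)\le (r/\ell_n)^d h(\ell_n)$, and your claimed bound is false in the typical cases, e.g.\ $h(t)=t^\beta$ with $\beta<d$, where $\ell_1/\ell_n\gg 2^{n}$ and a single ball of radius $\approx\ell_1$ already violates it. What saves the conclusion is not that large balls are expensive per $(r/\ell_n)^d$ covered cubes, but that a ball of radius $r$, with $\ell_j\le r<\ell_{j-1}$, meets at most $A(d)$ cubes of generation $j$, hence at most $A(d)2^{d(n-j)}\approx A\,h(r)/h(\ell_n)$ terminal cubes, by the calibration \eqref{f77}; this is exactly the growth estimate $\psi_n(B(x,t))\le Ch(t)$ for the measure \eqref{f79} proved in \eqref{f710}, which then gives $\mathbf M_0\ge c\,h(\s_0)$ by the mass-distribution principle. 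You flag this very point as ``the main obstacle'' in your last sentence, but the inequality you actually wrote would fail, and the correct generation-by-generation count is never carried out.

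The second gap concerns the quantifiers on $P$ and $\mathbf M$. In the proposition $P$ is given and $\nu$ may depend on it; the paper realizes this by choosing $\s_0$ as the \emph{critical} scale in \eqref{f76}, where $\k(\s_0)=C_4$ and $\k(\s)<C_4$ for all $\s>\s_0$. That single choice yields \eqref{f74} for every $\mathbf M\ge h(\s_0)$ with $C=1$ on the spot, and the remaining band $ch(\s_0)\le\mathbf M<h(\s_0)$ is absorbed by taking $C=c^{-1}$ and $N_0\ge C$. You instead fix an arbitrary ``target scale'' $\mathbf M_*$, \emph{define} $P$ from the construction, claim $\mathbf M_0\approx\mathbf M_*$ (only the lower bound is available; no upper bound on $M_h(\ZZ(\nu,P))$ is proved, so your verification ``at $\mathbf M=\mathbf M_0$'' is not grounded), and dispose of all larger $\mathbf M$ by saying the integral window ``contributes only sublinear growth''. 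That assertion is not justified: to cover all $\mathbf M\ge c\mathbf M_*$ one needs either the paper's critical choice of $\s_0$, or an explicit estimate of the form $\int_{h^{-1}(C\mathbf M/N)}^{h^{-1}(\mathbf M)}(h(t)/t^s)^2\,dt/t\le C(d,s)(\mathbf M/\mathbf M_*)^2\int_{\ell_n}^{\ell_1}(h(t)/t^s)^2\,dt/t$ (using $h\le\mathbf M$ on the window, $C\ge c^{-1}$ to keep the lower limit above $\s_n$, and the comparison \eqref{f78}), together with an intermediate-value argument showing that $\mathbf M_*$ can be tuned so that your defined $P$ matches a prescribed one. None of this appears in the proposal, so as written it proves neither the stated quantifier over $P$ nor the inequality for all admissible $\mathbf M$.
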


In \cite[Section~7]{ENV} a certain family of random sets and measures was introduced. It was proved that this family contains (random) measures $\nu$ with the properties indicated in Proposition \ref{pr72}. Thus, the existence of a measure with the desired properties was established, but the concrete measure was not presented. Below we give a non-probabilistic construction of such a measure.

\begin{proof}[Proof of Proposition \ref{pr72}] Without loss of generality we may assume that $N=2^{nd}$. It is sufficient to prove our assertion for $\eta=1$. For given $N$, $P$, we introduce the function
$$
\k(\s)=\frac1{Ph(\s)}\left[\int_{a_{\s}}^{\s}\bigg(\frac{h(t)}{t^s}\bigg)^2\frac{dt}{t}\right]^{1/2},\
\text{ where }\ h(a_\s)=N^{-1}h(\s).
$$
Clearly, $\k(\s)\to0$ as $\s\to\infty$ (we recall that $h(\infty)=\infty$, and $a_\s\to\infty$). Since $\frac{h(t)}{t^d}$ is nonincreasing,
\begin{equation}\label{f75}
\frac{h(t_2)}{h(t_1)}\le\bigg(\frac{t_2}{t_1}\bigg)^d,\quad 0<t_1\le t_2.
\end{equation}
In particular, $(\s/a_\s)^d\ge N$. Hence, $\k(\s)\to\infty$ as $\s\to0$. Thus, there exists $\s_0>0$ such that
\begin{equation}\label{f76}
\frac1{Ph(\s_0)}\left[\int_{a_{\s_0}}^{\s_0}\bigg(\frac{h(t)}{t^s}\bigg)^2\frac{dt}{t}\right]^{1/2}=C_4,\quad
\frac1{Ph(\s)}\left[\int_{a_{\s}}^{\s}\bigg(\frac{h(t)}{t^s}\bigg)^2\frac{dt}{t}\right]^{1/2}<C_4,\ \s>\s_0,
\end{equation}
where the constant $C_4>1$, depending only on $d$ and $s$, will be specified later.

Define $\s_j$ by the equalities
\begin{equation}\label{f77}
h(\s_j)=2^{-dj}h(\s_0),\quad j=1,\dots,n.
\end{equation}
By \eqref{f75}, $2\s_{j+1}\le\s_j,\ j=0,\dots,n-1$. Let $E_n$ be the Cantor set from Corollary \ref{co56}, and let $\nu$ be the probability measure consisting of $N=2^{nd}$ equal point masses $2^{-nd}$ located at the centers of the cubes $E_{n,k},\ k=1,\dots,2^{nd}$. We will prove that \eqref{f519} implies \eqref{f74}. Indeed,
\begin{equation}\label{f78}
\begin{split}
\sum_{j=1}^{n}\biggl(\frac{2^{-dj}}{\ell_j^s}\biggr)^2\stackrel{\eqref{f77}}{\approx}
&\frac1{h(\s_0)^2}\sum_{j=1}^{n}\biggl(\frac{h(\s_j)}{\s_j^s}\biggr)^2\approx
\frac1{h(\s_0)^2}\sum_{j=1}^{n}\int_{\s_{j}}^{\s_{j-1}}\bigg(\frac{h(t)}{t^s}\bigg)^2\frac{dt}{t}\\
=&\frac1{h(\s_0)^2}\int_{\s_n}^{\s_0}\bigg(\frac{h(t)}{t^s}\bigg)^2\frac{dt}{t}
\stackrel{\eqref{f76}}{>}\frac{P^2}{c_3^2}\,,
\end{split}
\end{equation}
if $C_4$ is big enough; here $c_3$ is the constant from \eqref{f519}, and $a_{\s_0}=\s_n$.

Define the measure $\psi_n$ by the equality
\begin{equation}\label{f79}
\psi_n=\frac{h(\s_0)}{\HH^d(E_n)}\,\HH^d|_{E_n}.
\end{equation}
Fix a ball $B(x,t)\subset\R^d$. Suppose that $t\le\ell_n$. Then
$$
\psi_n(B(x,t))\le C(d)\,\frac{h(\s_0)}{2^{dn}\ell_n^d}\,t^d
\stackrel{\eqref{f77}}{\le} C'\,\frac{h(\ell_n)}{\ell_n^d}\,t^d
\le C'\,\frac{h(t)}{t^d}\,t^d=C'h(t).
$$
Let $t\ge\ell_n$, and let $j$ be such that $\ell_j\le t<\ell_{j-1}$ (if $t\ge\ell_1$, the upper bound is absent). Then $B(x,t)$ intersects at most $A(d)$ cubes $E_{j,i}$ of $j$-th generation, that is at most $A(d)2^{d(n-j)}$ cubes $E_{n,k}$. Therefore
\begin{equation}\label{f710}
\psi_n(B(x,t))\le A(d)\,\frac{h(\s_0)}{\HH^d(E_n)}\,2^{d(n-j)}\ell_n^d=A(d)h(\s_0)2^{-dj}
\stackrel{\eqref{f77}}{\le}Ch(t).
\end{equation}
Hence, $M_h(G)\ge c(d,s)\psi_n(G)$ for every set $G$ in $\R^d$. We have
\begin{align*}
M_h(\ZZ(\nu,P))&\ge c\psi_n(\ZZ(\nu,P))\\
&\stackrel{\eqref{f78}}{\ge}
c\psi_n\bigg\{x\in E_n:|R_{\nu}^s(x)|> c_3\bigg[\sum_{j=1}^n\t_j^2\bigg]^{1/2}\bigg\}\stackrel{\eqref{f519}}{\ge}
ch(\s_0).
\end{align*}
Thus, for any $\mathbf M\ge M_h(\ZZ(\nu,P))$ we have $\mathbf M\ge ch(\s_0)$. If $\mathbf M\ge h(\s_0)$, then
$\s:=h^{-1}(\mathbf M)\ge \s_0$, and \eqref{f76} implies \eqref{f74} with $C\in[1,N)$. If
$ch(\s_0)\le\mathbf M<h(\s_0)$, then
\begin{align*}
\mathbf M&\ge ch(\s_0)\stackrel{\eqref{f76}}{=}\frac{c}{C_4P}
\left[\int_{h^{-1}(h(\s_0)/N)}^{\s_0}\bigg(\frac{h(t)}{t^s}\bigg)^2\frac{dt}{t}\right]^{1/2}\\
&\ge\frac{c}{C_4P}
\left[\int_{h^{-1}(c^{-1}\mathbf M/N)}^{h^{-1}(\mathbf M)}\bigg(\frac{h(t)}{t^s}\bigg)^2\frac{dt}{t}\right]^{1/2},
\end{align*}
and we get \eqref{f74} with $C=c^{-1}$ and $N_0\ge C$.
\end{proof}

\medskip

\noindent{\bf Remark.} One can see that the relation \eqref{f77} plays a crucial role in the proof of Proposition \ref{pr72} (as well as in the proof of the assertions below). Thus, additional assumptions on $\{\s_j\}$ imply certain conditions on $h$. For instance, the assumption $\t_{j+1}\le\t_j$ gives the unnatural restriction
$h(\s_{j+1})/h(\s_j)\le(\s_{j+1}/\s_j)^s$. For $h(t)=t^\beta$, this  means that $\beta\ge s$.

Moreover, if we assume that $(2+\d)\s_{j+1}\le\s_j$, $\d>0$, then 
$h(\s_{j+1})/h(\s_j)=2^{-d}\ge(\s_{j+1}/\s_j)^{d-\e}$, $\e>0$. Our assumption that $h(t)t^{-d}$ is nonincreasing, does not provide us with this property. Thus, we need additional conditions on $h$ (for instance, the stronger assumption that $h(t)t^{\e-d}$ is nonincreasing).

\medskip

Our next results concern the problem on the comparison of the capacity $\g_{s,+}$ and Hausdorff measure. It was proved in \cite{ENV} (see Theorem~10.1) that for each compact set $E\subset\R^d$,
$$
\g_{s,+}(E)\ge cM_h(E)\bigg[\int_0^{t_2}\bigg(\frac{h(t)}{t^{s}}\bigg)^2\frac{dt}t\bigg]^{-1/2},\quad 0<s<d,
$$
where $c$ depends only on $d$, $s$, and $t_2$ is defined by the equality $h(t_2)=M_h(E)$. This relation is sharp in the following sense.

\begin{proposition}\label{pr73} For any $s,d$ with $0<s<d$, and for any measure function $h$, there is a constant $C$, depending only on $d,s$, and a compact set $E$, such that $M_h(E)>0$, and
\begin{equation}\label{f711}
\g_{s,+}(E)\le CM_h(E)\bigg[\int_0^{t_2}\bigg(\frac{h(t)}{t^{s}}\bigg)^2\frac{dt}t\bigg]^{-1/2},\ \text{ where }\
h(t_2)=M_h(E).
\end{equation}
\end{proposition}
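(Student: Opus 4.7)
The plan is to construct a concrete Cantor set $E$ adapted to the gauge $h$ and apply Theorem \ref{th71}. Fix any $\s_0>0$ and define $\s_j$ recursively by $h(\s_j)=2^{-dj}h(\s_0)$; the hypothesis that $h(r)/r^d$ is nonincreasing forces $\s_{j+1}/\s_j\le(h(\s_{j+1})/h(\s_j))^{1/d}=1/2$, so \eqref{f71} is satisfied. Let $E$ be the infinite regularized Cantor set built from $\{\s_j\}_{j\ge1}$ with the parameters $\a$, $T$ from Theorem \ref{th71}.

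The first key step, essentially already present in \eqref{f78}, is the two-sided comparison
\begin{equation*}
\sum_{j=1}^{\infty}\Bigl(\frac{2^{-dj}}{\s_j^s}\Bigr)^2 \approx \frac{1}{h(\s_0)^2}\int_0^{\s_0}\Bigl(\frac{h(t)}{t^s}\Bigr)^2\frac{dt}{t}.
\end{equation*}
On each ring $[\s_j,\s_{j-1}]$ the integrand is bounded above by $(2^d h(\s_j))^2/t^{2s+1}$ (using $h(\s_{j-1})=2^d h(\s_j)$), and the tail integral $\int_{\s_j}^{\infty}t^{-2s-1}\,dt$ equals $(2s)^{-1}\s_j^{-2s}$, giving an upper bound $\approx(h(\s_j)/\s_j^s)^2$; the matching lower bound comes from the subinterval $[\s_j,2\s_j]\subset[\s_j,\s_{j-1}]$. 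Together with Theorem \ref{th71} and $\ell_j\approx\s_j$ from \eqref{f14}, this yields
\begin{equation*}
\g_{s,+}(E)\le Ch(\s_0)\Bigl[\int_0^{\s_0}\bigl(h(t)/t^s\bigr)^2\tfrac{dt}{t}\Bigr]^{-1/2}.
\end{equation*}

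The second key step is to show $M_h(E)\approx h(\s_0)$. The upper bound comes from covering $E$ with the $2^{dj}$ cubes of generation $j$, each of diameter $\le C\s_j$, and applying $h(\l r)\le\l^d h(r)$ for $\l\ge1$, a direct consequence of the nonincreasingness of $h(r)/r^d$. For the lower bound, pass to a weak-$*$ limit $\psi$ of the measures $\psi_n$ defined in \eqref{f79}; the growth estimate \eqref{f710}, $\psi_n(B(x,t))\le Ch(t)$, survives the limit for open balls, and the Frostman-type inequality $M_h(G)\ge c\psi(G)$ yields $M_h(E)\ge c\psi(E)=c h(\s_0)>0$. Since $h$ is continuous and strictly increasing, $t_2=h^{-1}(M_h(E))\approx\s_0$, so $\int_0^{t_2}\approx\int_0^{\s_0}$ of the same integrand, and assembling the pieces produces \eqref{f711}. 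If the integral in \eqref{f711} diverges then Theorem \ref{th71} already gives $\g_{s,+}(E)=0$, so the inequality is trivial.

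The main obstacle I anticipate is the two-sided control $M_h(E)\approx h(\s_0)$ in degenerate regimes (e.g.\ $h$ nearly constant over long intervals, so that $\s_{j-1}/\s_j$ is very large), where the Frostman measure $\psi$ and the covering upper bound must remain sharp simultaneously and consistent with $\ell_j\approx\s_j$; this is precisely where the hypothesis on $h(r)/r^d$ is used both for the $d$-regularity $h(\l r)\le\l^d h(r)$ and for the geometric decay $\s_{j+1}\le\s_j/2$ that keeps the ring estimates above under control.
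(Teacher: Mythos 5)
Your construction and argument coincide with the paper's own proof: the same choice $h(\s_j)=2^{-dj}h(\s_0)$, the same comparison of $\sum_j(2^{-dj}/\s_j^s)^2$ with $h(\s_0)^{-2}\int_0^{\s_0}(h(t)/t^s)^2\,\frac{dt}{t}$ as in \eqref{f78}, the bound $M_h(E)\approx h(\s_0)$ via a covering for the upper estimate and a weak limit of the Frostman measures \eqref{f79} with growth \eqref{f710} for the lower one, and then Theorem \ref{th71}. The only flawed point is the assertion that $t_2=h^{-1}(M_h(E))\approx\s_0$: from $ch(\s_0)\le h(t_2)\le Ch(\s_0)$ you cannot conclude $t_2\approx\s_0$, since $h^{-1}$ need not be doubling (take $h$ nearly flat near $\s_0$); what you actually need, and what is true, is the comparison of the normalized integrals, and it follows from $h(t_2)\approx h(\s_0)$ by the same tail estimate you used for the rings: if $t_2>\s_0$ then $\int_{\s_0}^{t_2}(h(t)/t^s)^2\frac{dt}{t}\le C^2h(\s_0)^2(2s)^{-1}\s_0^{-2s}\le C'\int_{\s_0/2}^{\s_0}(h(t)/t^s)^2\frac{dt}{t}$, while for $t_2\le\s_0$ the needed inequality is trivial; this is exactly the role of \eqref{f712} in the paper's chain of estimates. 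With that one-line repair your proof is complete and matches the paper's.
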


\begin{proof} Fix some $\s_0>0$, and define the infinite sequence $\{\s_j\}_{j=1}^\infty$ by the equalities \eqref{f77}. Let $E$ be the Cantor set from Theorem \ref{th71}. We claim that
\begin{equation}\label{f712}
ch(\s_0)\le M_h(E)\le Ch(\s_0).
\end{equation}
The upper bound is obvious. The lower bound for a finite set $J$ was proved above. If $J$ is infinite, we consider the weak limit $\psi$ of some weakly convergent subsequence of the sequence $\{\psi_{j_p}\}$ of the measures defined in \eqref{f79}. Clearly, $\psi(B(x,t))\le Ch(t)$ for any ball $B(x,t)$ (see \eqref{f710}). Hence, $M_h(E)\ge c\psi(E)=ch(\s_0)$.

Using the same estimates as in \eqref{f78}, we get
$$
\sum_{j=1}^{\infty}\biggl(\frac{2^{-dj}}{\ell_j^s}\biggr)^2\approx
\frac1{h(\s_0)^2}\int_{0}^{\s_0}\bigg(\frac{h(t)}{t^s}\bigg)^2\frac{dt}{t}
\stackrel{\eqref{f712}}{\approx}\frac1{h(t_2)^2}\int_{0}^{t_2}\bigg(\frac{h(t)}{t^s}\bigg)^2\frac{dt}{t}.
$$
This relation together with \eqref{f73} imply \eqref{f711}.
\end{proof}

For $h(t)=t^\beta$ we get the following assertion.

\begin{corollary}\label{cor74} Let $0<s<d,\ h(t)=t^\beta,\ \beta>s$. There is a compact set $E\subset\R^d$ such that
$$
\g_{s,+}(E)\le C\,(\beta-s)^{1/2}[M_h(E)]^{s/\beta},\quad C=C(d,s).
$$
\end{corollary}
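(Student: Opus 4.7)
The plan is to apply Proposition \ref{pr73} directly with the specific gauge $h(t)=t^\beta$ and carry out the resulting elementary integral explicitly. No new construction is needed: the Cantor set $E$ furnished by Proposition \ref{pr73} for this choice of $h$ will already satisfy the claimed inequality.

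First, I would fix $h(t)=t^\beta$, observe that $\beta>s$ guarantees the convergence of the integral near $0$ in \eqref{f711}, and solve $h(t_2)=M_h(E)$ to obtain $t_2=[M_h(E)]^{1/\beta}$. Next, I would evaluate
\begin{equation*}
\int_0^{t_2}\bigg(\frac{h(t)}{t^{s}}\bigg)^2\frac{dt}t
=\int_0^{t_2}t^{2(\beta-s)-1}\,dt
=\frac{t_2^{2(\beta-s)}}{2(\beta-s)}
=\frac{[M_h(E)]^{2(\beta-s)/\beta}}{2(\beta-s)},
\end{equation*}
so that
\begin{equation*}
\bigg[\int_0^{t_2}\bigg(\frac{h(t)}{t^{s}}\bigg)^2\frac{dt}t\bigg]^{-1/2}
=\sqrt{2(\beta-s)}\,[M_h(E)]^{-(\beta-s)/\beta}.
\end{equation*}

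Plugging this into the estimate of Proposition \ref{pr73} and combining the powers of $M_h(E)$ via $1-(\beta-s)/\beta=s/\beta$, I obtain
\begin{equation*}
\g_{s,+}(E)\le C M_h(E)\cdot\sqrt{2(\beta-s)}\,[M_h(E)]^{-(\beta-s)/\beta}
=C'(\beta-s)^{1/2}[M_h(E)]^{s/\beta},
\end{equation*}
with $C'=C\sqrt2$ depending only on $d$ and $s$, which is exactly the claimed inequality.

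There is essentially no obstacle here beyond bookkeeping: the integral is a power-function integral, and the only point to watch is that $\beta>s$ makes the exponent $2(\beta-s)-1$ genuinely greater than $-1$ so the integral is finite and the factor $(\beta-s)^{1/2}$ appears in the numerator (not as a divergent quantity). The compact set $E$ is simply the ``regularized'' Cantor set produced by Proposition \ref{pr73} applied to $h(t)=t^\beta$, whose sequence $\{\s_j\}$ is determined by \eqref{f77} with $h(\s_j)=2^{-dj}h(\s_0)$, i.e. $\s_j=2^{-dj/\beta}\s_0$; condition $\beta>s$ (together with $\beta\le d$ being not required, only that $h(t)/t^d$ is nonincreasing, i.e.\ $\beta\le d$) is compatible with the hypotheses of Proposition \ref{pr73}.
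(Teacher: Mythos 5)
Your proposal is correct and coincides with the paper's intended argument: the corollary is stated as an immediate consequence of Proposition \ref{pr73} applied to $h(t)=t^\beta$, and your computation of $t_2=[M_h(E)]^{1/\beta}$ and of $\int_0^{t_2}t^{2(\beta-s)-1}\,dt$ gives exactly the stated bound with constant $C\sqrt2$. Only tidy the final parenthetical: the standing assumption that $h(t)/t^d$ be nonincreasing does force $\beta\le d$ for the construction in Proposition \ref{pr73} (for $\beta>d$ one has $M_h\equiv0$ on $\R^d$ and the claim is trivial), so the meaningful range is $s<\beta\le d$.
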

This statement is a supplement to Corollary 10.2 in \cite{ENV}: for each compact set $E\subset\R^d$,
$$
\g_{s,+}(E)\ge c\,(\beta-s)^{1/2}[M_h(E)]^{s/\beta},\quad\text{where}\quad 0<s<d,\quad h(t)=t^\beta,\quad\beta>s,
$$
and $c$ depends only on $d$ and $s$.

We conclude with one more direct consequence of Proposition \ref{pr73}.

\begin{corollary}\label{cor75} Suppose that $\int_{0}\big(\frac{h(t)}{t^s}\big)^2\frac{dt}{t}=\infty$.
Then there exists a compact set $E$ such that $M_h(E)>0$, but $\g_{s,+}(E)=0$.
\end{corollary}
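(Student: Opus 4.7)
The plan is to invoke Proposition \ref{pr73} directly. For the given measure function $h$, that proposition produces a compact set $E\subset\R^d$ with $M_h(E)>0$ satisfying
$$
\g_{s,+}(E)\le CM_h(E)\bigg[\int_0^{t_2}\bigg(\frac{h(t)}{t^{s}}\bigg)^2\frac{dt}t\bigg]^{-1/2},\quad h(t_2)=M_h(E).
$$
Because $M_h(E)$ is a positive finite number, $t_2>0$ is finite. The hypothesis $\int_0\bigl(h(t)/t^s\bigr)^2\,dt/t=\infty$ forces $\int_0^{t_2}\bigl(h(t)/t^s\bigr)^2\,dt/t=\infty$ as well, so the bracketed factor equals zero. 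Since $\g_{s,+}\ge0$, we conclude $\g_{s,+}(E)=0$.

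Equivalently, one can produce $E$ explicitly by tracing the proof of Proposition \ref{pr73}: fix any $\s_0>0$, define the sequence $\{\s_j\}_{j=1}^\infty$ by $h(\s_j)=2^{-dj}h(\s_0)$ (which satisfies \eqref{f71} thanks to the monotonicity of $h(t)/t^d$, exactly as in \eqref{f75}), and let $E$ be the corresponding regularized Cantor set from Theorem \ref{th71}. Repeating the computation \eqref{f78} with the upper summation limit $n$ replaced by $\infty$ gives
$$
\sum_{j=1}^\infty\biggl(\frac{2^{-dj}}{\ell_j^s}\biggr)^2\approx
\frac1{h(\s_0)^2}\int_0^{\s_0}\bigg(\frac{h(t)}{t^s}\bigg)^2\frac{dt}t,
$$
which diverges under our hypothesis. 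Theorem \ref{th71} then yields $\g_{s,+}(E)=0$, while the weak limit argument used in the proof of Proposition \ref{pr73} (via the measures $\psi_{j_p}$ defined in \eqref{f79}) still gives $M_h(E)\approx h(\s_0)>0$.

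There is no genuine obstacle: the corollary is a direct readout of Proposition \ref{pr73} in the degenerate case where the Dini-type integral blows up at zero. The only points to check carefully are that $\s_j$ is well defined and decreases fast enough to satisfy \eqref{f71}, both of which are immediate from the assumption that $h(t)/t^d$ is nonincreasing, and that the lower bound $M_h(E)\ge ch(\s_0)$ survives the passage to an infinite iteration, which is already handled in the proof of Proposition \ref{pr73}.
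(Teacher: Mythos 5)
Your proposal is correct and follows the paper's own route: the paper derives Corollary \ref{cor75} precisely as a direct consequence of Proposition \ref{pr73}, since the divergence of $\int_0\big(\frac{h(t)}{t^s}\big)^2\frac{dt}{t}$ makes the bracketed factor vanish while $M_h(E)>0$. Your second, explicit construction just retraces the proof of Proposition \ref{pr73} (with Theorem \ref{th71} handling the divergent series), so it adds nothing beyond the paper's argument but is consistent with it.
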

This statement demonstrates the sharpness of the following implication (Corollary 10.2 in \cite{ENV}): if $M_h(E)>0$ for a measure function $h$ with  $\int_{0}\big(\frac{h(t)}{t^s}\big)^2\frac{dt}{t}<\infty$, then $\g_{s,+}(E)>0$.

\medskip

\noindent{\bf Remark.} It is interesting to compare the condition $\int_{0}\big(\frac{h(t)}{t^s}\big)^2\frac{dt}{t}<\infty$ with the corresponding condition $\int_0\frac{h(t)}{t^{s}}\,\frac{dt}t<\infty$, arising in the analogous problem of the comparison of Hausdorff measure and the classical Riesz capacity $C_s(E)$. The latter is generated by potentials with the {\it positive} kernel $|x|^{-s}$ (see \cite{Ca}, \cite{E1}, Sections 1,\,2 and the references therein, and \cite{AH}, p.~147 for a more general setting). Clearly, $C_s(E)\le\g_{s,+}(E)$. The exponent 2 in $\int_{0}\big(\frac{h(t)}{t^s}\big)^2\frac{dt}{t}$ reflects the difference between potentials with the signed kernel $x|x|^{-s-1}$, and potentials with the positive kernel $|x|^{-s}$.

\medskip

The results obtained generalize and refine the corresponding statements in \cite{E1}, where the case $d=2$, $s=1$ was considered.

\medskip

\noindent{\bf Acknowledgements.} We are grateful to F. Nazarov whose idea helped to simplify greatly the proof of Theorem 2.1.

  \begin{bibdiv}
  \begin{biblist}

\bib{AH}{book}{
    AUTHOR = {Adams, D.R.},
    AUTHOR = {Hedberg, L.I.},
     TITLE = {Function spaces and potential theory},
    SERIES = {Grundlehren der Mathematischen Wissenschaften [Fundamental
              Principles of Mathematical Sciences]},
    VOLUME = {314},
 PUBLISHER = {Springer-Verlag},
   ADDRESS = {Berlin},
      YEAR = {1996},
     PAGES = {xii+366},
}

\bib{AE}{article}{
    AUTHOR = {Anderson, J.M.},
   AUTHOR = {Eiderman, V.Ya.},
    TITLE = {Cauchy transforms of point masses: the logarithmic derivative of polynomials},
   JOURNAL = {Ann. of Math. (2)},
    VOLUME = {163},
      YEAR = {2006},
    NUMBER = {3},
     PAGES = {1057--1076},
}

\bib{Ca}{book}{
    AUTHOR = {Carleson,  L.},
     TITLE = {Selected problems on exceptional sets},
   PUBLISHER = {Van Nostrand},
ADDRESS = { Princeton},
      YEAR = {1967},
}

\bib{C}{article}{
    AUTHOR = {Christ,  M.},
     TITLE = {A $T(b)$ theorem with remarks on analytic capacity and the Cauchy integral},
   JOURNAL = {Colloquium Math.},
   VOLUME = {LX/LXI},
    YEAR = {1990},
     PAGES = { 601--628},
}

\bib{E}{article}{
   AUTHOR = {Eiderman, V.Ya.},
   TITLE = {Hausdorff measure and capacity associated with Cauchy potentials},
   JOURNAL = {Mat. Zametki},
   VOLUME = {63},
   YEAR = {1998},
    NUMBER = {6},
    PAGES = {923--934},
   TRANSLATION = {
   JOURNAL = {Math. Notes},
   VOLUME = {63},
   YEAR = {1998},
    NUMBER = {5-6},
    PAGES = {813--822}
},
}

\bib{E1}{article}{
   AUTHOR = {Eiderman, V.Ya.},
   TITLE = {Cartan-type estimates for potentials with a Cauchy kernel and with real kernels},
   JOURNAL = {Mat. Sb.},
   VOLUME = {198},
   YEAR = {2007},
    NUMBER = {8},
    PAGES = {115--160},
   TRANSLATION = {
   JOURNAL = {Sb. Math.},
   VOLUME = {198},
   YEAR = {2007},
    NUMBER = {7-8},
    PAGES = {1175--1220}
},
}

\bib{ENV}{article}{
    AUTHOR = {Eiderman, V.},
    AUTHOR = {Nazarov, F.},
    AUTHOR = {Volberg, V.},
     TITLE = {Vector valued {R}iesz potentials: {C}artan type estimates
              and related capacities},
    JOURNAL={Proc. London Math. Soc.},
      YEAR = {2010},
      NOTE = {doi: 10.1112/plms/pdq003}
}

\bib{GPT}{article}{
   AUTHOR = { Garnett,  J.},
   AUTHOR = {Prat, L.},
   AUTHOR = {Tolsa,  X.},
   TITLE = { Lipschitz harmonic capacity and bilipschitz images of Cantor sets},
   JOURNAL = {Math. Res. Lett.},
   VOLUME = {13},
   YEAR = {2006},
    NUMBER = {6},
    PAGES = {865--884},
}

\bib{MPV}{article}{
   AUTHOR = {Mateu, J.},
   AUTHOR = {Prat,  L.},
   AUTHOR = {Verdera, J.},
   TITLE = {The capacity associated to signed Riesz kernels, and Wolff potentials},
   JOURNAL = {J. Reine Angew. Math.},
   VOLUME = {578},
   YEAR = {2005},
    PAGES = {201--223},
}

\bib{MT}{article}{
   AUTHOR = {Mateu,  J.},
   AUTHOR = {Tolsa, X.},
   TITLE = {Riesz transforms and harmonic $Lip_1$-capacity in Cantor sets},
   JOURNAL = {Proc. London Math. Soc.},
   VOLUME = { 89},
   YEAR = {2004},
    NUMBER = {3},
    PAGES = { 676--696},
}

\bib{MTV}{article}{
   AUTHOR = {Mateu, J.},
   AUTHOR = {Tolsa, X.},
   AUTHOR = {Verdera, J.},
   TITLE = {The planar Cantor sets of zero analytic capacity and the local $T(b)$-theorem},
   JOURNAL = {J. Amer. Math. Soc.},
   VOLUME = {16},
   YEAR = {2003},
   NUMBER = {1},
    PAGES = {19--28},
}

\bib{NTV}{article}{
    AUTHOR = {Nazarov, F.},
    AUTHOR = {Treil, S.},
    AUTHOR = {Volberg, A.},
     TITLE = {Weak type estimates and Cotlar inequalities for Calder\'on-Zygmund operators on non-homogeneous spaces},
   JOURNAL = {Int. Math. Res. Not. IMRN},
      YEAR = {1998},
    NUMBER = {9},
     PAGES = {463--486}
}

\bib{P}{article}{
   AUTHOR = {Prat,  L.},
   TITLE = {Potential theory of signed Riesz kernels: capacity and Hausdorff measure},
   JOURNAL = {Int. Math. Res. Not. IMRN},
   YEAR = {2004},
    NUMBER = {19},
    PAGES = {937--981},
}

\bib{T}{article}{
   AUTHOR = {Tolsa, X.},
   TITLE = {On the analytic capacity $\gamma\sb +$},
   JOURNAL = {Indiana Univ. Math. J.},
   VOLUME = {51},
   YEAR = {2002},
    NUMBER = { 2},
    PAGES = {317--343},
}

\bib{To}{article}{
   AUTHOR = {Tolsa, X.},
   TITLE = {Painlev\'e's problem and the semiadditivity of analytic capacity},
   JOURNAL = {Acta Math.},
   VOLUME = {190},
   YEAR = {2003},
    NUMBER = {1},
    PAGES = {105--149},
}

\bib{T1}{article}{
   AUTHOR = {Tolsa,  X.},
   TITLE = {Calder\'on-Zygmund capacities and Wolff potentials on Cantor sets},
   NOTE = {Preprint arXiv:1001.2986v1, 2010},
}

\bib{V}{book}{
   author={Volberg, A.},
   title={Calder\'on-Zygmund capacities and operators on nonhomogeneous
   spaces},
   series={CBMS Regional Conference Series in Mathematics},
   volume={100},   
   PUBLISHER = {Amer. Math. Soc.},
ADDRESS = {Providence, RI},
      YEAR = {2003},
   review={\MR{2019058 (2005c:42015)}},
}

\end{biblist}
  \end{bibdiv}

\end{document}